\newtheorem{thm}{Theorem}[section]
\newtheorem{lem}{Lemma}[section]
\newtheorem{cor}{Corollary}[section]
\newtheorem{conj}{Conjecture}[section]
\newtheorem{claim}{Claim}[section]
\newtheorem{definition}{Definition}[section]
\begin{document}
\title{Non-bipartite graphs without theta subgraphs\footnote{Supported by the National Natural Science Foundation of China (Nos.\,12271162,\,12326372), and Natural Science Foundation of Shanghai (Nos. 22ZR1416300 and 23JC1401500) and The Program for Professor of Special Appointment (Eastern Scholar) at Shanghai Institutions of Higher Learning (No. TP2022031).}}
\author{{\bf Longfei Fang$^{a,b}$},
{\bf Huiqiu Lin$^{a}$}\thanks{Corresponding author: huiqiulin@126.com(H. Lin)} \\
\small $^{a}$ School of Mathematics, East China University of Science and Technology, \\
\small  Shanghai 200237, China\\
\small $^{b}$ School of Mathematics and Finance, Chuzhou University, \\
\small  Chuzhou, Anhui 239012, China\\
}

\date{}
\maketitle
{\flushleft\large\bf Abstract}
A generalized theta graph $\theta(\ell_1,\ell_2,\dots,\ell_{t})$ is obtained from
internally disjoint paths of lengths $\ell_1,\ell_2,\dots,\ell_{t},$ respectively by sharing a common pair of endpoints.
Let ${\rm ex}(n,H)$ and ${\rm spex}(n,H)$
be the maximum size and the maximum spectral radius
over all $H$-free graphs of order $n$, respectively.
Bukh and Tait [Combin. Probab. Comput. 29  (2020)  495--507] proved that $\mathrm{ex}(n,\theta(\ell_1,\ell_2,\dots,\ell_{t}))\leq O_{\ell}(t^{1-1/{\ell}}n^{1+1/{\ell}})$ when $\ell_1=\cdots=\ell_{t}=\ell$.
Liu and Yang [SIAM J. Discrete Math. 37 (2023) 1237--1251] showed that $\mathrm{ex}(n,\theta(\ell_1,\ell_2,\dots,\ell_{t}))=O(n^{1+2/{(\ell_1+\ell_2)}})$,
where $\ell_1,\dots,\ell_t$ have the same parity, $\ell_1\leq \cdots \leq \ell_t$ and $\ell_2\geq 2$.
%Zhai and Lin [J. Graph Theory 102 (2023) 502--520] proved that $T_{n,2}$ is the unique extremal graph with respect to $\mathrm{spex}(n,\theta(1,2,r))$ when $n\geq 10r$ for odd $r\geq 3$ and $n\geq 7r$ for even $r$.
Fix a color-critical graph $H$ with $\chi(H)=r+1\geq 3$.
Simonovits' chromatic critical edge theorem and Nikiforov's spectral chromatic critical edge theorem imply that $T_{n,r}$ is the extremal graph with respect to ${\rm ex}(n,H)$ and ${\rm spex}(n,H)$
for $n$ sufficiently large, respectively.
Since $T_{n,r}$ is $r$-partite,
it is interesting to study the Tur\'{a}n number and the spectral Tur\'{a}n number of a color-critical graph $H$ in non-$r$-partite graphs.
Denote by ${\rm EX}_{r+1}(n,H)$ (resp. ${\rm SPEX}_{r+1}(n,H)$) the family of $n$-vertex $H$-free non-$r$-partite graphs with the maximum size (resp. spectral radius).
Brouwer showed that any graph in $\mathrm{EX}_{r+1}(n,K_{r+1})$ is of size $e(T_{n,r})-\lfloor\frac{n}{r}\rfloor+1$ for $n\geq 2r+1$.
Lin, Ning and Wu [Combin. Probab. Comput. 30 (2) (2021) 258--270], and
Li and Peng [SIAM J. Discrete Math. 37 (2023)  2462--2485] characterized the unique graph in $\mathrm{SPEX}_{r+1}(n,K_{r+1})$ for $r\geq 2$.
Particularly, the unique graph is of size $e(T_{n,r})-\lfloor\frac{n}{r}\rfloor+1$.
Thus $\mathrm{SPEX}_{r+1}(n,K_{r+1})\subseteq \mathrm{EX}_{r+1}(n,K_{r+1})$.
It is natural to conjecture that ${\rm SPEX}_{r+1}(n,H)\subseteq {\rm EX}_{r+1}(n,H)$
for arbitrary color-critical graph $H$ with $\chi(H)=r+1\geq 3$.
Fix $q,r\geq 2$ with even $q$,
 $\theta(1,q,r)$ is a color-critical graph with chromatic number three.
In this paper, we prove that $\mathrm{SPEX}_{3}(n,\theta(1,q,r))\subseteq \mathrm{EX}_{3}(n,\theta(1,q,r))$  for sufficiently large $n$.
Furthermore, we determine all the graphs in $\mathrm{SPEX}_{3}(n,\theta(1,q,r))$ and $\mathrm{EX}_{3}(n,\theta(1,q,r))$, respectively.
%Our results generalized some previous result on odd cycles.

\begin{flushleft}
\textbf{Keywords:} extremal graph; spectral radius; theta graph; non-$r$-partite
\end{flushleft}
\textbf{AMS Classification:} 05C35; 05C50

\section{Introduction}

Given a graph $H$.
We say a graph \textit{$H$-free}
if it does not contain $H$ as a subgraph.
The \emph{Tur\'{a}n number} of $H$, denoted by $\mathrm{ex}(n,H)$,
is the maximum number of edges in an $n$-vertex $H$-free graph.
Let $\mathrm{EX}(n,H)$ denote the family of $n$-vertex $H$-free graphs with the maximum number of edges.
As one of the earliest results in extremal graph theory, Tur\'{a}n's theorem \cite{Turan} states that
${\rm EX}(n,K_{r+1})=\{T_{n,r}\}$,
where the Tur\'{a}n graph $T_{n,r}$ denotes the complete $n$-vertex $r$-partite graph with part sizes as
equal as possible.
A generalized theta graph $\theta(\ell_1,\ell_2,\dots,\ell_{t})$ is obtained from
internally disjoint paths of lengths $\ell_1,\ell_2,\dots,\ell_{t},$ respectively by sharing a common pair of endpoints.
Specifically, $\theta(\ell,\ell)=C_{2\ell}$ and $\theta(\ell,\ell+1)=C_{2\ell+1}$.
Thus, the study of determining  $\mathrm{ex}(n,\theta(\ell_1,\ell_2,\dots,\ell_{t}))$ generalizes the problem of determining $\mathrm{ex}(n,C_{k})$.
F\"{u}redi and Gunderson \cite{FG-2015} characterized all the graphs in $\mathrm{EX}(n,C_{2\ell+1})$ for $\ell\geq 2$.
Set $\theta_{\ell,t}=\theta(\ell_1,\ell_2,\dots,\ell_{t})$ when $\ell_1=\cdots=\ell_{t}=\ell$.
Already in the 80s, Faudree and Simonovits \cite{Faudree1983} established $\mathrm{ex}(n,\theta_{\ell,t})=O_{\ell,t}(n^{1+1/{\ell}})$.
Bukh and Tait \cite{Bukh2020} improved the result and obtained $\mathrm{ex}(n,\theta_{\ell,t})\leq c_{\ell}t^{1-1/{\ell}}n^{1+1/{\ell}}$ for some constant $c_{\ell}$ depending on $\ell$.
Recently, Liu and Yang \cite{Liu2023} showed that $\mathrm{ex}(n,\theta(\ell_1,\ell_2,\dots,\ell_{t}))=O(n^{1+2/{(\ell_1+\ell_2)}})$,
where $\ell_1,\dots,\ell_t$ have the same parity, $\ell_1\leq \cdots \leq \ell_t$ and $\ell_2\geq 2$.
When restrain $t=3$, we obtain a theta graph $\theta(\ell_1,\ell_2,\ell_3)$.
In 2019, Verstra\"{e}te and Williford \cite{Verstraete2019} constructed a $\theta(4,4,4)$-free graph with $(\frac12-o(1))n^{5/4}$ edges.

%F\"{u}redi and Gunderson \cite{Furedi2015} characterized $\mathrm{EX}(n,C_{2\ell+1})$ for all $\ell$ and $n$.

Denote by $A(G)$ and $\rho(G)$ the \emph{adjacency matrix} and the \emph{spectral radius} of a graph $G$, respectively.
%The \emph{spectral extremal value} of a given graph $H$, written by $\mathrm{spex}(n,H)$,
%is the maximum spectral radius in an $n$-vertex $H$-free graph.
Let $\mathrm{SPEX}(n,H)$ denote the family of $n$-vertex $H$-free graphs with the maximum spectral radius.
%That is, $\mathrm{SPEX}(n,\theta(1,2,r))=\mathrm{EX}(n,\theta(1,2,r))=\{T_{n,2}\}$ for large enough $n$.
In 2022, Cioab\u{a}, Desai and Tait \cite{Cioaba2022} conjectured that if $H$ is a graph satisfying that every graph in ${\rm EX}(n,H)$ is obtained from Tur\'{a}n graph by adding $O(1)$ edges,
then ${\rm SPEX}(n,H)\subseteq {\rm EX}(n,H)$ for sufficiently large $n$.
This conjecture has been confirmed for some special cases of $H$;
such as complete graphs \cite{Guiduli-1996,Nikiforov-2007},
friendship graphs \cite{Cioaba-2020,ZHAI2022},
intersecting cliques \cite{Desai-2022},
and intersecting odd cycles \cite{Li-2022}.
Recently, Wang, Kang and Xue \cite{Wang2023} completely solved the conjecture and gave a stronger result.

\begin{thm}\label{THM1.1}\emph{(\cite{Wang2023})}
Let $r\geq 2$, $n$ be sufficiently large, and $H$ be a graph with ${\rm ex}(n,H)=e(T_{n,r})+O(1)$.
Then ${\rm SPEX}(n,H)\subseteq {\rm EX}(n,H)$.
\end{thm}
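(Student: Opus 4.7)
Let $G^{*}$ be a spectral extremal $n$-vertex $H$-free graph, so $\rho(G^{*})=\max\{\rho(G):G\text{ is }H\text{-free on }n\text{ vertices}\}$. Because any $H$-free graph has at most ${\rm ex}(n,H)$ edges, it suffices to show $e(G^{*})\geq {\rm ex}(n,H)$. The plan is the standard three-step recipe for such results: lower bound $\rho(G^{*})$, apply a spectral stability theorem to pin down the rough structure of $G^{*}$, and then use the Perron eigenvector to rule out any deficiency in $e(G^{*})$ by a local swap.

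First I would derive the lower bound $\rho(G^{*})\geq (1-1/r)n-O(1/n)$. Picking any edge-extremal $H$-free graph $G_{\mathrm{ex}}$ with $e(G_{\mathrm{ex}})={\rm ex}(n,H)=e(T_{n,r})+O(1)$ gives $\rho(G^{*})\geq \rho(G_{\mathrm{ex}})\geq 2e(G_{\mathrm{ex}})/n$, which matches $\rho(T_{n,r})$ up to lower order. The assumption ${\rm ex}(n,H)=e(T_{n,r})+O(1)$ forces $\chi(H)=r+1$ via Erd\H{o}s--Stone--Simonovits, so the spectral stability theorem of Nikiforov applies: $G^{*}$ admits an $r$-partition $V_{1}\cup\cdots\cup V_{r}$ with $\sum_{i}\bigl||V_{i}|-n/r\bigr|=o(n)$ and only $o(n^{2})$ edges inside the parts.

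Next I would use the Perron eigenvector $\mathbf{x}$ of $G^{*}$, normalized by $\max_{v}x_{v}=1$. By comparing $\rho(G^{*})x_{u}=\sum_{v\sim u}x_{v}$ with the trivial bound $\sum_{v\sim u}x_{v}\leq d(u)$ and exploiting the lower bound from the previous step, every vertex with noticeable Perron weight must have degree $(1-1/r)n-o(n)$ with almost all neighbours in the other parts. Iterating this argument pins down $\mathbf{x}$ up to $o(1)$ on a ``core'' consisting of all but $O(1)$ vertices, and allows me to upgrade the stability conclusion: the symmetric difference of $G^{*}$ and $T_{n,r}$ with respect to the partition $V_{1}\cup\cdots\cup V_{r}$ has only finitely many edges, bounded by a constant depending on $H$ but not on $n$.

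Suppose now, toward a contradiction, that $e(G^{*})<{\rm ex}(n,H)$. Then $G^{*}$ and an edge-extremal graph $G_{\mathrm{ex}}$ both differ from $T_{n,r}$ in at most $O(1)$ edges, so $G^{*}$ must be missing some cross-part edge $uv$ that is present in $G_{\mathrm{ex}}$. If $G^{*}+uv$ is $H$-free, the Rayleigh-quotient gain $\rho(G^{*}+uv)-\rho(G^{*})\geq 2x_{u}x_{v}/\|\mathbf{x}\|_{2}^{2}>0$ contradicts spectral maximality. The main obstacle is precisely to produce such an improving non-edge while preserving $H$-freeness: one must show that, among the bounded number of cross-part non-edges of $G^{*}$, at least one can be added without embedding a copy of $H$. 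This is where the $O(1)$ slack in the hypothesis ${\rm ex}(n,H)=e(T_{n,r})+O(1)$ is essential, because a purely extremal-edge counting argument on $G^{*}\triangle T_{n,r}$ shows that the family of ``forbidden'' non-edges (those closing up a copy of $H$) cannot be exactly the family of missing cross-part non-edges of $G^{*}$; otherwise $G_{\mathrm{ex}}$ itself would contain $H$. Handling the finitely many exceptional vertices and turning this counting argument into an explicit improving swap is the heart of the proof.
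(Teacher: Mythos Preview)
The paper does not prove Theorem~\ref{THM1.1}; it is quoted, with attribution, from Wang, Kang and Xue~\cite{Wang2023} and used only as background motivation in the introduction. There is therefore no proof in this paper to compare your proposal against.

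As a separate remark on your sketch itself: the first two steps (lower bound on $\rho(G^{*})$ via $2e/n$, then Nikiforov's spectral stability to get an almost-balanced $r$-partition with $o(n^{2})$ internal edges) are correct and standard. The gap is in your final paragraph. You claim that if $e(G^{*})<{\rm ex}(n,H)$ then some cross-part non-edge of $G^{*}$ can be added while staying $H$-free, and you justify this by saying ``otherwise $G_{\mathrm{ex}}$ itself would contain $H$.'' That inference does not follow: $G^{*}$ and $G_{\mathrm{ex}}$ need not share the same $O(1)$-edge perturbation of $T_{n,r}$, nor even the same partition, so the set of non-edges whose addition to $G^{*}$ creates a copy of $H$ has no direct relation to the edges of $G_{\mathrm{ex}}$. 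Producing a safe improving edge is exactly the nontrivial content of the Wang--Kang--Xue argument, and it requires a considerably finer structural analysis (controlling low-degree and high-internal-degree vertices, bounding the Perron weights of exceptional vertices, and then a careful comparison of $\rho(G^{*})$ with the spectral radius of a canonical extremal configuration) rather than a one-line counting observation. Your proposal identifies the obstacle correctly but does not overcome it.
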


A graph $H$ is called \emph{color-critical} if
 there exists an edge $e\in E(H)$ such that $\chi(H-\{e\})<\chi(H)$,
where $\chi(H)$ denotes the \emph{chromatic number} of $H$.
Zhai, Fang and Shu \cite{Zhai2021} showed that
$\theta(p,q,r)$ is a color-critical graph with chromatic number three for any $p,q,r$ with different parities.
Moreover, they determined $\mathrm{EX}(n,\theta(p,q,r))=\{T_{n,2}\}$ for
$n\geq 9(p+q+r-1)^2-3(p+q+r-1)$.
Subsequently, when $n\geq 10(r-1)$ for even $r$ and $n\geq 7(r-1)$ for odd $r\geq 3$, $\mathrm{SPEX}(n,\theta(1,2,r))=\{T_{n,2}\}$ was given by Zhai and Lin \cite{Zhai2023},
as a direct corollary, $G$ contains a consecutive cycle of length in $[3,n/7]$ if $\rho(G)>\rho(T_{n,2})$.
Fix an arbitrary color-critical graph $H$ with $\chi(H)=r+1\geq 3$.
Simonovits \cite{Simonovits1968} proved that
 there exists an integer $n_0(H)$ such that $\mathrm{EX}(n,H)=\{T_{n,r}\}$ when $n\geq n_0(H)$,
which is known as the \emph{chromatic critical edge theorem}.
Nikiforov's result (see \cite[Theorem 2]{Nikiforov-2009-2}) implies that there exists an integer $n_0(H)\geq e^{|V(H)|r^{(2r+9)(r+1)}}$ such that $\mathrm{SPEX}(n,H)=\{T_{n,r}\}$ when $n\geq n_0(H)$,
which is known as the \emph{spectral chromatic critical edge theorem}.
That is, for large enough $n$, $\mathrm{SPEX}(n,H)=\mathrm{EX}(n,H)=\{T_{n,r}\}$.
Notice that $T_{n,r}$ is $r$-partite.
Based on these observations, we consider Tur\'{a}n-type problems and spectral Tur\'{a}n-type problems
for a color-critical graph $H$ in non-$r$-partite graphs.
Let ${\rm EX}_{r+1}(n,H)$ (resp. ${\rm SPEX}_{r+1}(n,H)$) denote the family of $n$-vertex $H$-free non-$r$-partite graphs with the maximum size (resp. spectral radius).
The aforementioned  maximum size is denoted by ${\rm ex}_{r+1}(n,H)$.
Erd\H{o}s showed that $\mathrm{ex}_{3}(n,K_{3})=\lfloor\frac{(n-1)^2}{4}\rfloor+1$
(see \cite[p. 306]{Bondy2008}).
Brouwer \cite{Brouwer1981} determined $\mathrm{ex}_{r+1}(n,K_{r+1})=e(T_{n,r})-\lfloor\frac{n}{r}\rfloor+1$ for $n\geq 2r+1$.
As for spectral extremal result, Lin, Ning and Wu \cite{Lin-N2021} obtained that $\mathrm{SPEX}_{3}(n,K_{3})=\{SK_{\lceil\frac{n-1}{2}\rceil,\lfloor\frac{n-1}{2}\rfloor}\}$.
Recently, Li and Peng \cite{Li-P2023} characterized the unique graph in $\mathrm{SPEX}_{r+1}(n,K_{r+1})$,
which has exactly $e(T_{n,r})-\lfloor\frac{n}{r}\rfloor+1$ edges.
Thus ${\rm SPEX}_{r+1}(n,K_{r+1})\subseteq {\rm EX}_{r+1}(n,K_{r+1})$ for $n\geq 2r+1$.
A result of Ren, Wang, Wang, and Yang implies that
$\mathrm{ex}_{3}(n,C_{2\ell+1})=\lfloor\frac{(n-2)^2}{4}\rfloor+3$ for $\ell\geq 2$ and $n\geq 318\ell^2$ (see \cite[Theorem 1.3]{Ren2024}).
In \cite{Guo20212, Zhang2023}, they obtained that $\mathrm{SPEX}_{3}(n,C_{2\ell+1})=\{K_{\lceil\frac{n-2}{2}\rceil,\lfloor\frac{n-2}{2}\rfloor}\circ K_3\}$ for $\ell\geq 2$ and sufficiently large $n$.
Since $e(K_{\lceil\frac{n-2}{2}\rceil,\lfloor\frac{n-2}{2}\rfloor}\circ K_3)=\lfloor\frac{(n-2)^2}{4}\rfloor+3$,
we can see that $\mathrm{SPEX}_{3}(n,C_{2\ell+1})\subseteq \mathrm{EX}_{3}(n,C_{2\ell+1})$ for $\ell\geq 2$ and sufficiently large $n$.
It is not hard to check that if $q,r\geq 2$ are integers with even $q$,
then $\theta(1,q,r)$ is a color-critical graph with chromatic number three.
Bataineh, Jaradat and Al-Shboul \cite{Bataineh2016} obtained $\mathrm{ex}_{3}(n,\theta(1,2,3))=\lfloor\frac{(n-1)^2}{4}\rfloor+1$ for $n\geq 9$.
Recently, Li, Sun and Wei \cite{Li2023} proved that $\mathrm{EX}_{3}(n,\theta(1,2,4))=\{K_{\lceil\frac{n-1}{2}\rceil,\lfloor\frac{n-1}{2}\rfloor}\bullet K_3\}$ for $n\geq 137$,
$\mathrm{SPEX}_{3}(n,\theta(1,2,3))=\{SK_{\lfloor\frac{n-1}{2}\rfloor,\lceil\frac{n-1}{2}\rceil}\}$ for $n\geq 20$, and $\mathrm{SPEX}_{3}(n,\theta(1,2,4))=\{K_{\lceil\frac{n-1}{2}\rceil,\lfloor\frac{n-1}{2}\rfloor}\bullet K_3\}$ for $n\geq 21$.
Since $e(SK_{\lfloor\frac{n-1}{2}\rfloor,\lceil\frac{n-1}{2}\rceil})=\lfloor\frac{(n-1)^2}{4}\rfloor+1$,
their results imply that ${\rm SPEX}_{r+1}(n,\theta(1,2,r))\subseteq {\rm EX}_{r+1}(n,\theta(1,2,r))$  for $r\in \{3,4\}$ and large enough $n$.
More relevant results can be seen in \cite{Amin2013,Khadziivanov1979,Tyomkyn2015,Zhang2023}.
Inspired by the above results, it is natural to consider the following conjecture.

\begin{conj}\label{CONJ1.1}
Let $H$ be an arbitrary color-critical graph with $\chi(H)=r+1\geq 3$.
For sufficiently large $n$, ${\rm SPEX}_{r+1}(n,H)\subseteq {\rm EX}_{r+1}(n,H)$.
\end{conj}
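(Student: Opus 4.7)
The plan is to adapt the framework of Wang--Kang--Xue (Theorem \ref{THM1.1}) to the non-$r$-partite regime. Let $G^{*}\in\mathrm{SPEX}_{r+1}(n,H)$ with Perron eigenvector $\mathbf{x}$ and spectral radius $\rho=\rho(G^{*})$. First, I would construct an explicit competitor $G_{0}$: starting from $T_{n,r}$, add a single edge inside some part and, if necessary, delete a bounded number of edges to restore $H$-freeness. Since $H$ is color-critical with chromatic number $r+1$, only $O_{H}(1)$ deletions are needed, because any newly created copy of $H$ must use the internal edge in a prescribed way dictated by the critical edge of $H$. This $G_{0}$ is $H$-free and non-$r$-partite, so $\rho\geq \rho(G_{0})\geq \rho(T_{n,r})-o(1)$.

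Next, I would apply Simonovits-type spectral stability, which, combined with the spectral chromatic critical edge theorem cited before Conjecture \ref{CONJ1.1}, yields a vertex partition $V_{1}\cup\cdots\cup V_{r}$ of $V(G^{*})$ (taken as a max-$r$-cut) with each $|V_{i}|=n/r+o(n)$, with $o(n^{2})$ internal edges and $o(n^{2})$ missing cross-edges. A sequence of eigenvector-weighted reassignment steps then refines the partition: vertices with too many internal Perron-weighted neighbors are moved to a better part, and the Perron equation $\rho x_{v}=\sum_{u\sim v}x_{u}$ forces every surviving vertex to have nearly maximum Perron-weighted degree. This cleanup reduces the number of internal edges to $O_{H}(1)$ and localizes the missing cross-edges.

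The crux is then a structural lemma that replaces the hypothesis $\mathrm{ex}(n,H)=e(T_{n,r})+O(1)$ of Theorem \ref{THM1.1} by an analogous non-$r$-partite statement: every $H$-free non-$r$-partite graph that is $o(n)$-close to $T_{n,r}$ in edit distance satisfies $e(G)\leq \mathrm{ex}_{r+1}(n,H)$, with equality only in highly constrained configurations. The color-critical property of $H$ is the essential input: if $e$ is the critical edge and $(V_{1},\ldots,V_{r})$ is an $r$-coloring of $H-e$ with $e$ inside $V_{1}$, then placing even one internal edge inside a part of a near-complete $r$-partite graph, together with a moderate cross-neighborhood, already embeds $H$, forcing many cross-edges to be missing. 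Quantifying this trade-off precisely should yield the required edge bound and a description, up to $O(1)$ edges, of the extremal configurations.

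Finally, suppose for contradiction that $e(G^{*})<\mathrm{ex}_{r+1}(n,H)$, pick any $G_{1}\in\mathrm{EX}_{r+1}(n,H)$, and align its $r$-partition with that of $G^{*}$. A term-by-term Rayleigh-quotient comparison $\rho(G_{1})\geq \mathbf{x}^{\top}A(G_{1})\mathbf{x}/\mathbf{x}^{\top}\mathbf{x}$, using the near-uniformity of $\mathbf{x}$ within each $V_{i}$ established in the cleanup, shows that the edge surplus $e(G_{1})-e(G^{*})\geq 1$ outweighs any local discrepancies, giving $\rho(G_{1})>\rho(G^{*})$, a contradiction. Hence $e(G^{*})=\mathrm{ex}_{r+1}(n,H)$, i.e.\ $G^{*}\in\mathrm{EX}_{r+1}(n,H)$. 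The main obstacle is the structural lemma of the third paragraph: for an arbitrary color-critical $H$ there is no closed-form description of $\mathrm{EX}_{r+1}(n,H)$, so one must extract enough information purely from color-criticality to pin down both the edge deficit relative to $T_{n,r}$ and the local structure of any near-extremal graph, uniformly in $H$. This uniformity is precisely what the special case $H=\theta(1,q,r)$ handled in the paper avoids, and it is where substantial new ideas seem to be required before the full conjecture can be settled.
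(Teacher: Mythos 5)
The statement you are addressing is stated in the paper only as a \emph{conjecture}: the authors prove it solely for $H=\theta(1,q,r)$ (Corollary \ref{COR1.1}), via the explicit classification in Theorems \ref{THM1.2} and \ref{THM1.3}, and they do not claim a general proof. Your proposal is a research program rather than a proof, and you say so yourself; but beyond the admitted missing structural lemma there are two concrete defects. First, the competitor construction in your opening paragraph is wrong: after adding one edge $u_1u_2$ inside a part of $T_{n,r}$, you cannot restore $H$-freeness with $O_H(1)$ deletions while keeping the graph non-$r$-partite. Already for $H=K_3$ (or any color-critical $H$ whose critical edge lies on a triangle mapped into $u_1u_2$ plus a cross vertex), every vertex $x$ of another part creates a copy of $H$ through $u_1u_2$, so one must delete at least one of $xu_1,xu_2$ for each such $x$, i.e.\ $\Theta(n)$ edges. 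This is consistent with every case treated in the paper: $\mathrm{ex}_{3}(n,\theta(1,q,r))=e(T_{n,2})-\Theta(n)$, never $e(T_{n,2})-O(1)$. The weaker consequence you need, $\rho\geq(1-\tfrac1r)n-O(1)$, survives, but the premise as written is false and it also undermines your claim that the Wang--Kang--Xue hypothesis ``$\mathrm{ex}(n,H)=e(T_{n,r})+O(1)$'' has a routine non-$r$-partite analogue.

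Second, the closing Rayleigh-quotient step does not go through. Since both $G^{*}$ and any $G_{1}\in\mathrm{EX}_{r+1}(n,H)$ differ from $T_{n,r}$, and hence potentially from each other, in $\Theta(n)$ edges, the inequality
$\mathbf{x}^{\top}A(G_{1})\mathbf{x}-\mathbf{x}^{\top}A(G^{*})\mathbf{x}\geq 2x_{u^{*}}^{2}\bigl((1-\epsilon)^{2}|E(G_{1})\setminus E(G^{*})|-|E(G^{*})\setminus E(G_{1})|\bigr)$
obtained from near-uniformity of the Perron vector is useless when the symmetric difference has linear size and the edge surplus is $1$: the error term $\epsilon\cdot\Theta(n)$ swamps the surplus. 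The paper never compares against an arbitrary member of $\mathrm{EX}_{r+1}$; it first pins down $G^{*}$ up to a bounded set of vertices (Claims \ref{CLA3.1}--\ref{CLA3.6}) and then makes local edge-rotation arguments in which the relevant eigenvector differences are attached to $O(1)$ vertices. Indeed, Theorems \ref{THM1.2} and \ref{THM1.3} show that $\mathrm{EX}_{3}(n,\theta(1,q,r))$ can be a large family ($\mathcal{H}(n)$ or $\mathcal{G}(n)$) of which $\mathrm{SPEX}_{3}$ selects a single member; that selection is governed exactly by the second-order eigenvector effects your term-by-term comparison discards. So even granting your structural lemma, the final step needs the full local analysis, and the conjecture remains open.
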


To state our main results,
we first introduce several graphs and graph families.
Let $P_n,C_n,K_n$ and $K_{a,n-a}$ denote a path, a cycle, a complete graph and a complete bipartite graph of order $n$, respectively.
Denote by $K_{a,b}\circ K_3$ the graph obtained by identifying a vertex of $K_{a,b}$
belonging to the part of size $b$ and a vertex of $K_3$.
Denote by $K_{a,b}\bullet K_3$ the graph obtained by identifying an edge of $K_{a,b}$
with an edge of $K_3$.
Denote by $SK_{a,b}$  the graph obtained from $K_{a,b}$ by subdividing an edge.

\begin{figure}%[!h]
\centering
\begin{tikzpicture}[scale=0.75, x=1.00mm, y=1.00mm, inner xsep=0pt, inner ysep=0pt, outer xsep=0pt, outer ysep=0pt]
\definecolor{L}{rgb}{0,0,0}
\definecolor{F}{rgb}{0,0,0}

\node[draw,dashed, line width=0.3mm, minimum width=45mm, minimum height=9mm] (rect1) at (10,0) {};
% 绘制一个长方形
\draw(8,8) node[anchor=base west]{\fontsize{12.23}{17.07}\selectfont $Y$};

\node[draw, line width=0.3mm, minimum width=20mm, minimum height=6mm] (rect11) at (-5,0) {};
% 绘制一个长方形
\draw(-15.5,-1.5) node[anchor=base west]{\fontsize{10.23}{17.07}\selectfont $Y\setminus\{u_1,u_2\}$};

\node[circle,fill=F,draw,inner sep=0pt,minimum size=2mm] (u1) at (20.00,00.00) {};
\draw(19,2) node[anchor=base west]{\fontsize{10.23}{17.07}\selectfont $u_2$};

\node[circle,fill=F,draw,inner sep=0pt,minimum size=2mm] (u2) at (30.00,00.00) {};
\draw(29,2) node[anchor=base west]{\fontsize{10.23}{17.07}\selectfont $u_1$};

\node[draw,dashed, line width=0.3mm, minimum width=45mm, minimum height=9mm] (rect2) at (10,-30) {};
% 绘制一个长方形
\draw(8,-42) node[anchor=base west]{\fontsize{12.23}{17.07}\selectfont $X$};

\node[draw, line width=0.3mm, minimum width=20mm, minimum height=6mm] (rect21) at (-5,-30) {};
% 绘制一个长方形
\draw(-11,-31.5) node[anchor=base west]{\fontsize{10.23}{17.07}\selectfont $X\setminus X_1$};
\node[draw, line width=0.3mm, minimum width=20mm, minimum height=6mm] (rect22) at (25,-30) {}; % 绘制一个长方形
\draw(23,-31.5) node[anchor=base west]{\fontsize{10.23}{17.07}\selectfont $X_1$};

\definecolor{L}{rgb}{0,0,0}
\path[line width=0.3mm, draw=L] (u1) -- (u2);
\path[line width=0.3mm, draw=L] (u1) -- (rect21);
\path[line width=0.3mm, draw=L] (u2) -- (rect22);
\path[line width=0.3mm, draw=L] (rect11) -- (rect21);
\path[line width=0.3mm, draw=L] (rect11) -- (rect22);

%右边图

\node[draw,dashed, line width=0.3mm, minimum width=45mm, minimum height=9mm] (rect1b) at (90,0) {};
% 绘制一个长方形
\draw(88,8) node[anchor=base west]{\fontsize{12.23}{17.07}\selectfont $Y$};

\node[draw, line width=0.3mm, minimum width=20mm, minimum height=6mm] (rect12b) at (75,0) {};
% 绘制一个长方形
\draw(70,-1.5) node[anchor=base west]{\fontsize{10.23}{17.07}\selectfont $Y\setminus Y_1$};

\node[draw, line width=0.3mm, minimum width=20mm, minimum height=6mm] (rect11b) at (105,0) {};
% 绘制一个长方形
\draw(102,-1.5) node[anchor=base west]{\fontsize{10.23}{17.07}\selectfont $Y_1$};

\node[draw,dashed, line width=0.3mm, minimum width=45mm, minimum height=9mm] (rect2b) at (90,-30) {};
% 绘制一个长方形
\draw(88,-42) node[anchor=base west]{\fontsize{12.23}{17.07}\selectfont $X$};

\node[draw, line width=0.3mm, minimum width=20mm, minimum height=6mm] (rect22b) at (75,-30) {};
% 绘制一个长方形
\draw(70,-31.5) node[anchor=base west]{\fontsize{10.23}{17.07}\selectfont $X\setminus X_1$};

\node[draw, line width=0.3mm, minimum width=20mm, minimum height=6mm] (rect21b) at (105,-30) {};
% 绘制一个长方形
\draw(102,-31.5) node[anchor=base west]{\fontsize{10.23}{17.07}\selectfont $X_1$};

\node[circle,fill=F,draw,inner sep=0pt,minimum size=2mm] (w1) at (135.00,00.00) {};
\draw(134,2) node[anchor=base west]{\fontsize{10.23}{17.07}\selectfont $w_1$};

\node[circle,fill=F,draw,inner sep=0pt,minimum size=2mm] (w2) at (135.00,-30.00) {};
\draw(134,-35) node[anchor=base west]{\fontsize{10.23}{17.07}\selectfont $w_2$};

\node[circle,fill=F,draw,inner sep=0pt,minimum size=2mm] (w3) at (150.00,-15.00) {};
\draw(152,-16) node[anchor=base west]{\fontsize{10.23}{17.07}\selectfont $w_3$};

\definecolor{L}{rgb}{0,0,0}
\path[line width=0.3mm, draw=L] (rect11b) -- (rect21b);
\path[line width=0.3mm, draw=L] (rect11b) -- (rect22b);
\path[line width=0.3mm, draw=L] (rect12b) -- (rect21b);
\path[line width=0.3mm, draw=L] (rect12b) -- (rect22b);

\path[line width=0.3mm, draw=L] (w1) -- (w2);
\path[line width=0.3mm, draw=L] (w1) -- (w3);
\path[line width=0.3mm, draw=L] (w2) -- (w3);

\path[line width=0.3mm, draw=L] (w1) -- (rect22b);
\path[line width=0.3mm, draw=L] (w2) -- (rect12b);
\path[line width=0.3mm, draw=L] (w3) -- (rect11b);
\path[line width=0.3mm, draw=L] (w3) -- (rect21b);

\draw(5,-50) node[anchor=base west]{\fontsize{14.23}{17.07}\selectfont $(a)$};
\draw(95,-50) node[anchor=base west]{\fontsize{14.23}{17.07}\selectfont $(b)$};

\end{tikzpicture}
\caption{$(a)$ Structure of graphs in $\mathcal{H}(n)$, and $(b)$ structure of graphs in $\mathcal{G}(n)$.}{\label{fig-1.1}}
\end{figure}
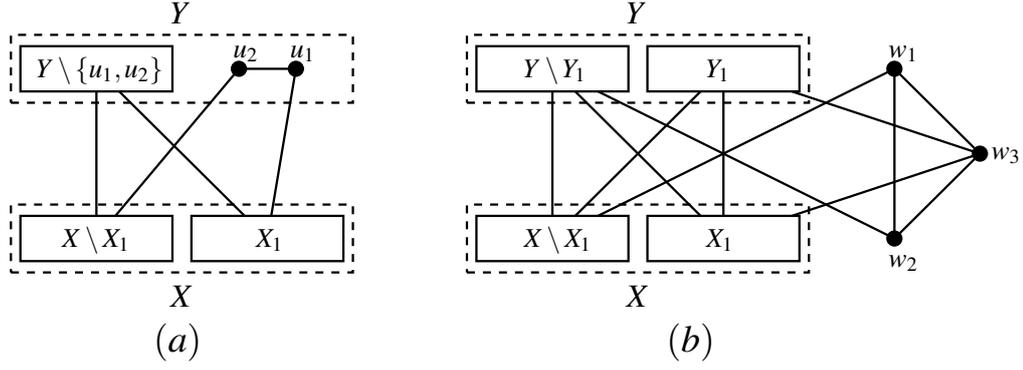

\begin{definition}\label{DEF1.1}
Let $X$ and $Y$ be color parts of $K_{\lceil\frac{n}{2}\rceil+b,\lfloor\frac{n}{2}\rfloor-b}$ with $|Y|=\lceil\frac{n}{2}\rceil+b$ and $u_1,u_2\in Y$,
where $b=0$ when $n$ is odd and $b\in \{0,1\}$ otherwise.
Define $\mathcal{H}(n)$ as the family of graphs obtained from $K_{\lceil\frac{n}{2}\rceil+b,\lfloor\frac{n}{2}\rfloor-b}$ by adding an edge $u_1u_2$,
and then deleting edges from $u_1$ to $X\setminus X_1$,
and from $u_2$ to $X_1$,
where $X_1$ is a non-empty proper subset of $X$
(see Figure \ref{fig-1.1} ($a$)).
\end{definition}

\begin{definition}\label{DEF1.2}
Let $X$ and $Y$ be color parts of $T_{n-3,2}$, $X_1\subseteq X$ and $Y_1\subseteq Y$.
Let $C_3$ be a triangle with vertex set $\{w_1,w_2,w_3\}$.
Define $\mathcal{G}(n)$ as the family of graphs obtained from $T_{n-3,2}$ and $C_3$
by joining $w_1$ to all vertices in $X\setminus X_1$,
 $w_2$ to all vertices in $Y\setminus Y_1$,
and $w_3$ to all vertices in $X_1\cup Y_1$, where  $|X_1|\cdot|Y_1|\leq1$
(see Figure \ref{fig-1.1} ($b$)).
\end{definition}

In this paper,
%we prove that Conjecture \ref{CONJ1.1} holds for $\theta(1,q,r)$ for $q,r\geq 2$ with even $q$.
%Furthermore,
we characterize all the graphs in $\mathrm{EX}_{3}(n,\theta(1,q,r))$ and $\mathrm{SPEX}_{3}(n,\theta(1,q,r))$ for sufficiently large $n$, respectively.

\begin{thm}\label{THM1.2}
Let $q,r\geq 2$ be integers with even $q$, and $n$ be sufficiently large. Then
 $${\rm EX}_{3}(n,\theta(1,q,r))=\left\{
                                       \begin{array}{ll}
                                        \mathcal{G}(n)  & \hbox{if $q=r=2$,} \\
                                        \{K_{\lceil\frac{n-1}{2}\rceil,\lfloor\frac{n-1}{2}\rfloor}\bullet K_3\} & \hbox{if $r\geq 4$ is even,}\\
                                         \mathcal{H}(n)  & \hbox{if $q=2$ and $r$ is odd,}\\
                                        \{K_{\lceil\frac{n-2}{2}\rceil,\lfloor\frac{n-2}{2}\rfloor}\circ K_3,K_{\lfloor\frac{n-2}{2}\rfloor,\lceil\frac{n-2}{2}\rceil}\circ K_3\} & \hbox{if $q\geq 4$ and $r$ is odd.}
                                       \end{array}
                                     \right.
$$

\end{thm}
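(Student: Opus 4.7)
I plan to prove Theorem \ref{THM1.2} in two phases: a direct verification that every candidate graph lies in $\mathrm{EX}_{3}(n,\theta(1,q,r))$, followed by a matching upper bound obtained by a stability argument plus a structural case analysis. For the lower bound I would check, case by case, that the listed graphs are non-bipartite, $\theta(1,q,r)$-free, and have the claimed edge count. In each case the $\theta(1,q,r)$-free check reduces to a single observation: every odd cycle passes through either the unique in-part edge of the candidate (or, in the $\mathcal{G}(n)$ case, through the attached triangle), and the common neighborhood of the two endpoints of such an edge is too sparse to realize internally disjoint paths of length $q$ and length $r$ simultaneously. The constraint $|X_{1}|\cdot|Y_{1}|\le 1$ in Definition \ref{DEF1.2} enters here precisely because it limits the common neighborhoods inside $\mathcal{G}(n)$ to cardinality one.

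For the upper bound, let $G\in\mathrm{EX}_{3}(n,\theta(1,q,r))$. Since $\mathrm{EX}(n,\theta(1,q,r))=\{T_{n,2}\}$ by Zhai--Fang--Shu and each candidate has at least $e(T_{n,2})-\lfloor n/2\rfloor+\Theta(1)$ edges, the Erd\H{o}s--Simonovits stability theorem supplies a max-cut bipartition $V(G)=A\sqcup B$ with $||A|-|B||=O(1)$ and $e(G[A])+e(G[B])=o(n^{2})$. I would then upgrade this soft stability to a bounded-perturbation statement: by iteratively removing a vertex with many non-neighbors on the opposite side of the cut and reinserting it optimally, and using the edge-maximality of $G$ together with the $\theta$-freeness to rule out large perturbations, one obtains that every vertex has at most $O_{q,r}(1)$ non-neighbors on the opposite side and that the total number of in-part edges is bounded by a constant depending only on $q,r$.

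With $G$ now revealed as $K_{A,B}$ plus a bounded perturbation carrying all the non-bipartite structure, the final step is a case analysis on $q$ and on the parity of $r$. For any in-part edge $xy$ of $G$, each common neighbor $z$ combined with a length-$r$ path in the dense bipartite part would produce a theta whenever parity matches; enforcing $\theta(1,q,r)$-freeness therefore pins down both the common neighborhoods and the global configuration. When $q=2$ the forbidden length-$2$ subpath forces $N(x)\cap N(y)=\varnothing$, giving the $\mathcal{H}(n)$ family when $r$ is odd and the enlarged $\mathcal{G}(n)$ family when additionally $r=2$; when $q\ge 4$ the parity obstruction in bipartite graphs channels every length-$q$ path through a single ``bridge'' of the triangle, forcing the perturbation to be $K_{a,b}\bullet K_{3}$ when $r$ is even and $K_{a,b}\circ K_{3}$ when $r$ is odd. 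A direct edge count then selects the extremal member in each case, with the parity of $n$ accounting for the two-graph family in the last case. I expect the principal obstacle to be the fine-stability upgrade in the previous paragraph: it must be tight enough to distinguish between configurations differing by only a single cross-edge and a single in-part edge, which is precisely the margin between the claimed extremal structures and their nearest competitors.
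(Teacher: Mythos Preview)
Your overall architecture (lower bound verification, then stability plus case analysis on the parity of $r$ and the value of $q$) matches the paper's. The crucial gap is in your ``fine-stability upgrade''. You assert that after the cleaning step \emph{every} vertex of $G$ will have only $O_{q,r}(1)$ non-neighbours across the cut, and that the total number of in-part edges is bounded by a constant. Both statements are false for the very graphs in the theorem: in $K_{\lceil (n-2)/2\rceil,\lfloor (n-2)/2\rfloor}\circ K_3$ the two pendant triangle vertices have degree $2$; in $\mathcal{H}(n)$ with $|X_1|=1$ the vertex $u_1$ has degree $2$; in $\mathcal{G}(n)$ with $X_1=Y_1=\varnothing$ the vertex $w_3$ has degree $2$. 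Each of these vertices has $\Theta(n)$ non-neighbours on the other side, so no edge-maximality argument can force the conclusion you state. What \emph{can} be proved---and what the paper does---is that the set $S$ of low-degree vertices is contained in the vertex set of a shortest odd cycle $C$ (so $|S|\le 5$). The rewiring move you describe (``remove and reinsert optimally'') preserves $\theta$-freeness, but it only preserves non-bipartiteness when the rewired vertex lies off $C$; this is exactly why the paper localises $S$ to $V(C)$ rather than eliminating it. With $S\subseteq V(C)$ in hand one shows $G-V(C)$ is genuinely bipartite, and the remaining analysis concerns how $V(C)$ attaches to the two colour classes.

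A second, smaller issue is the lower-bound check. Your ``common-neighbourhood sparsity'' heuristic does not by itself certify that $K_{\lceil (n-1)/2\rceil,\lfloor (n-1)/2\rfloor}\bullet K_3$ is $\theta(1,q,r)$-free when $q,r$ are both even: the two degree-$3$ vertices of a putative $\theta$ could lie on the triangle edge of the $\bullet$-graph, and there is no shortage of long paths between them. The correct observation (and the one the paper uses) is a parity argument: once the degree-$2$ triangle vertex is deleted the graph is bipartite with those two vertices in opposite parts, so every even-length path between them must use the deleted vertex, and one cannot have two internally disjoint such paths. Similar parity considerations, not common-neighbourhood size, drive the $q\ge 4$ cases.
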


\begin{thm}\label{THM1.3}
Let $q,r\geq 2$ be integers with even $q$, and $n$ be sufficiently large. Then
 $${\rm SPEX}_{3}(n,\theta(1,q,r))=\left\{
                                       \begin{array}{ll}
                                        \{K_{\lceil\frac{n-1}{2}\rceil,\lfloor\frac{n-1}{2}\rfloor}\bullet K_3\}  & \hbox{if $r$ is even,} \\
                                        \{SK_{\lceil\frac{n-1}{2}\rceil,\lfloor\frac{n-1}{2}\rfloor}\}  & \hbox{if $r$ is odd and $q=2$,}\\
                                         \{K_{\lceil\frac{n-2}{2}\rceil,\lfloor\frac{n-2}{2}\rfloor}\circ K_3\}  & \hbox{if $r$ is odd and $q\geq 4$.}
                                       \end{array}
                                     \right.
$$
\end{thm}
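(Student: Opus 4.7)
The plan is to combine spectral stability against $T_{n,2}$ with a local analysis of the non-bipartite structure forced on any $G^{\ast}\in\mathrm{SPEX}_{3}(n,\theta(1,q,r))$ by the $\theta(1,q,r)$-free condition and its interaction with the parities of $q$ and $r$.

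I would first check that each of the three candidate graphs is non-bipartite (two contain a triangle, and $SK_{a,b}$ contains a $5$-cycle) and $\theta(1,q,r)$-free, by a direct enumeration of which path lengths are realizable between any pair of vertices once the attached gadget is taken into account. A short quadratic-form estimate gives a lower bound of the form $\rho(G^{\ast})\geq\rho(T_{n,2})-O(1/n)$. Since $\theta(1,q,r)$ is a color-critical graph with chromatic number three, Simonovits' chromatic critical edge theorem yields $\mathrm{ex}(n,\theta(1,q,r))=e(T_{n,2})+O(1)$, so Theorem \ref{THM1.1} applies and forces $G^{\ast}$ to be very close to $T_{n,2}$: after relabeling there is a bipartition $V(G^{\ast})=A\cup B$ with $|A|,|B|=n/2-O(1)$ such that $G^{\ast}$ and $K_{A,B}$ differ in only $O(1)$ edges.

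Next I would refine this approximation using the Perron vector $x$ of $G^{\ast}$, normalized by $\|x\|_{\infty}=1$. From the two-step walk identity $\rho(G^{\ast})^{2}x_{u}=\sum_{v\in N(u)}\sum_{w\in N(v)}x_{w}$ and the lower bound on $\rho(G^{\ast})$ one deduces that every vertex has near-maximum weighted degree. A standard Perron-weight swap (relocating a vertex of small $x$-weight to the side where it has more neighbours, or adding missing cross-edges while deleting forbidden intra-edges that do not strictly increase the spectral radius) upgrades the approximation to the statement that $G^{\ast}$ contains a spanning complete bipartite graph $K_{A',B'}$ with $|A'|+|B'|=n-O(1)$, and the entire non-bipartite behaviour is concentrated on a bounded exceptional set $S\subseteq V(G^{\ast})$ whose attachment to the core is also bounded.

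The main work, and the step I expect to be the main obstacle, is the rigid classification of $S$ and its attachment based on the parities of $q$ and $r$. Because $K_{A',B'}$ contains linearly many internally disjoint bipartite paths of each short length of the correct parity between any two vertices, any non-trivial non-bipartite extension of the core immediately produces a $\theta(1,q,r)$, formed by the edge from the extension together with internally disjoint bipartite paths of lengths $q$ and $r$, unless the extension is one of the three minimal configurations in Theorem \ref{THM1.3}. Specifically, when $r$ is even the only surviving possibility is a single extra vertex adjacent to both endpoints of a cross-edge, giving $K_{\lceil(n-1)/2\rceil,\lfloor(n-1)/2\rfloor}\bullet K_{3}$; when $r$ is odd and $q=2$, subdividing one cross-edge is the only way to avoid producing $\theta(1,2,r)$, yielding $SK_{\lceil(n-1)/2\rceil,\lfloor(n-1)/2\rfloor}$; and when $r$ is odd and $q\geq 4$, gluing a triangle at a single vertex of the core is the only survivor, giving $K_{\lceil(n-2)/2\rceil,\lfloor(n-2)/2\rfloor}\circ K_{3}$. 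The delicacy is to rule out many apparently similar configurations (for example, a single intra-part edge when $r$ is odd and $q\geq 4$, or a subdivided cross-edge when $q\geq 4$) by exhibiting the forbidden $\theta(1,q,r)$ explicitly. A final Perron-weight edge-exchange inside each surviving structure pins down the balanced split of the two sides, isolating the unique graph in each case; combined with Theorem \ref{THM1.2} this also establishes $\mathrm{SPEX}_{3}(n,\theta(1,q,r))\subseteq \mathrm{EX}_{3}(n,\theta(1,q,r))$.
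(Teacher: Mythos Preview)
Your overall strategy---spectral stability toward $T_{n,2}$ followed by a parity-based classification of the non-bipartite gadget---matches the paper's, but the stability step as you describe it does not work. Theorem~\ref{THM1.1} (Wang--Kang--Xue) asserts only that $\mathrm{SPEX}(n,H)\subseteq\mathrm{EX}(n,H)$ for the \emph{global} spectral extremal graph; here that graph is $T_{n,2}$ itself, which is bipartite, so the theorem says nothing about a non-bipartite $G^{\ast}\in\mathrm{SPEX}_{3}(n,\theta(1,q,r))$, and in particular it does not provide any structural approximation of $G^{\ast}$. The correct input is the spectral stability lemma of Desai et al.\ (Lemma~\ref{lem3.5}), which from $\rho(G^{\ast})\geq(\tfrac12-\delta)n$ only yields that $G^{\ast}$ differs from $T_{n,2}$ in at most $\varepsilon n^{2}$ edges, not $O(1)$. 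Jumping directly to an $O(1)$ edit distance, or to a spanning complete bipartite subgraph minus a bounded exceptional set, is precisely the hard part and is not a ``standard Perron-weight swap''.

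The paper closes this gap by a specific mechanism you do not mention: it first bounds the shortest odd cycle $C$ to have length $3$ or $5$ (using Lemma~\ref{lem3.1} to rule out $g\geq 7$), and then proves via Perron-vector arguments that the entire ``bad'' set $S$ of low-degree vertices satisfies $W\subseteq S\subseteq V(C)$ (Claims~\ref{CLA2.6}, \ref{CLA3.1}). This is what forces the exceptional set to have size at most $5$ and makes $G'=G-V(C)$ genuinely bipartite. The subsequent case analysis then hinges on degree lower bounds for $V(C)$ obtained from Lemma~\ref{lem3.3} together with Lemma~\ref{lem3.4}, and in the even-$r$ case on a Perron edge-rotation that funnels all outside neighbours of $V(C)$ to one or two of its vertices. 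Your description of the three final configurations is correct, but without the $S\subseteq V(C)$ step and the tools just listed, the reduction from $\varepsilon n^{2}$-closeness to these rigid structures is not established.
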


Setting $|X_1|=1$ in Definition \ref{DEF1.1},
we can see that $SK_{\lceil\frac{n-1}{2}\rceil,\lfloor\frac{n-1}{2}\rfloor}\in \mathcal{H}(n)$.
Setting $X_1=Y_1=\varnothing$ in Definition \ref{DEF1.2},
we can see that $K_{\lceil\frac{n-1}{2}\rceil,\lfloor\frac{n-1}{2}\rfloor}\bullet K_3\in \mathcal{G}(n)$.
Combining these with Theorems \ref{THM1.2} and \ref{THM1.3},
we have the following result.

\begin{cor}\label{COR1.1}
Let $q,r\geq 2$ be integers with even $q$, and $n$ be sufficiently large.
Then Conjecture \ref{CONJ1.1} holds for $\theta(1,q,r)$.
\end{cor}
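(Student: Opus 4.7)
The plan is to derive Corollary \ref{COR1.1} as a direct consequence of Theorems \ref{THM1.2} and \ref{THM1.3}, which have already been stated and which together characterize both extremal families explicitly. Since $\theta(1,q,r)$ is color-critical with chromatic number $3$, Conjecture \ref{CONJ1.1} reduces in this setting to the inclusion $\mathrm{SPEX}_{3}(n,\theta(1,q,r)) \subseteq \mathrm{EX}_{3}(n,\theta(1,q,r))$, and this inclusion can be verified by a straightforward case analysis on the parity of $r$ and the value of $q$, matching the four cases of Theorem \ref{THM1.2} against the three cases of Theorem \ref{THM1.3}.

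First, I would handle the case that $r$ is even. If $r\geq 4$, both theorems return the same singleton $\{K_{\lceil(n-1)/2\rceil,\lfloor(n-1)/2\rfloor}\bullet K_3\}$, so the inclusion is immediate. If $q=r=2$, Theorem \ref{THM1.3} still gives the singleton $\{K_{\lceil(n-1)/2\rceil,\lfloor(n-1)/2\rfloor}\bullet K_3\}$, while Theorem \ref{THM1.2} returns the whole family $\mathcal{G}(n)$; here I would apply Definition \ref{DEF1.2} with the choice $X_1=Y_1=\varnothing$, observing that in this degenerate case $w_1$ and $w_2$ are joined to all of $X$ and all of $Y$ respectively and $w_3$ is pendant-adjacent to no vertices outside the triangle, which produces precisely $K_{\lceil(n-1)/2\rceil,\lfloor(n-1)/2\rfloor}\bullet K_3$, giving the required membership.

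Next, I would handle the case that $r$ is odd. For $q=2$, Theorem \ref{THM1.3} yields the singleton $\{SK_{\lceil(n-1)/2\rceil,\lfloor(n-1)/2\rfloor}\}$, while Theorem \ref{THM1.2} returns the family $\mathcal{H}(n)$; I would apply Definition \ref{DEF1.1} with $|X_1|=1$, in which case adding the edge $u_1u_2$ and deleting exactly the right edges realises the subdivision of a single edge of $K_{\lceil n/2\rceil,\lfloor n/2\rfloor}$, identifying the unique spectral extremal graph as an element of $\mathcal{H}(n)$. For $q\geq 4$, Theorem \ref{THM1.3} gives $\{K_{\lceil(n-2)/2\rceil,\lfloor(n-2)/2\rfloor}\circ K_3\}$, which is literally one of the two graphs enumerated by Theorem \ref{THM1.2}, so the inclusion is again trivial.

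Since each step of the verification is purely an unpacking of Definitions \ref{DEF1.1} and \ref{DEF1.2} against the explicit lists in Theorems \ref{THM1.2} and \ref{THM1.3}, there is no substantive obstacle beyond ensuring the parametric choices $X_1=Y_1=\varnothing$ and $|X_1|=1$ in the two definitions are actually permitted by the definitions (in Definition \ref{DEF1.2} the constraint $|X_1|\cdot|Y_1|\leq 1$ is satisfied by $\varnothing$, and in Definition \ref{DEF1.1} the requirement that $X_1$ be a non-empty proper subset of $X$ is satisfied by any singleton when $|X|\geq 2$, which holds for large $n$). The entire work of Corollary \ref{COR1.1} is therefore deferred to the proofs of Theorems \ref{THM1.2} and \ref{THM1.3}; the corollary itself is a one-paragraph verification.
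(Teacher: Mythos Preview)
Your proposal is correct and follows essentially the same approach as the paper: the paper also derives the corollary directly from Theorems \ref{THM1.2} and \ref{THM1.3} by observing that $K_{\lceil (n-1)/2\rceil,\lfloor (n-1)/2\rfloor}\bullet K_3\in\mathcal{G}(n)$ via the choice $X_1=Y_1=\varnothing$ in Definition \ref{DEF1.2} and that $SK_{\lceil (n-1)/2\rceil,\lfloor (n-1)/2\rfloor}\in\mathcal{H}(n)$ via $|X_1|=1$ in Definition \ref{DEF1.1}. Your case-by-case verification is slightly more explicit than the paper's one-sentence remark, but the content is identical.
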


Fix an integer $\ell\geq 2$.
Since $C_{2\ell+1}\subseteq \theta(1,2\ell,2\ell+1)$,
it follows that
a $C_{2\ell+1}$-free graph is also a $\theta(1,2\ell,2\ell+1)$-free graph.
Clearly, $K_{\lceil\frac{n-2}{2}\rceil,\lfloor\frac{n-2}{2}\rfloor}\circ K_3$ is $C_{2\ell+1}$-free.
Combining these with Theorems \ref{THM1.2} and \ref{THM1.3},
we can see that $K_{\lceil\frac{n-2}{2}\rceil,\lfloor\frac{n-2}{2}\rfloor}\circ K_3\in {\rm EX}_{3}(n,C_{2\ell+1})$ and ${\rm SPEX}_{3}(n,C_{2\ell+1})=\{K_{\lceil\frac{n-2}{2}\rceil,\lfloor\frac{n-2}{2}\rfloor}\circ K_3\}$ for $n$ sufficiently large.
Thus, we have the following result.

\begin{cor}\label{COR1.2}
\emph{(\cite{Guo20212,Ren2024,Zhang2023})}
Let $\ell\geq 2$ be an integer and $n$ be sufficiently large.
Then  Conjecture \ref{CONJ1.1} holds for $C_{2\ell+1}$.
\end{cor}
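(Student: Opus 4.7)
The plan is to derive Corollary \ref{COR1.2} as a more-or-less immediate consequence of Theorems \ref{THM1.2} and \ref{THM1.3}, together with an elementary structural observation relating $C_{2\ell+1}$ and $\theta(1,2\ell,2\ell+1)$. The conjectural inclusion ${\rm SPEX}_3(n,C_{2\ell+1})\subseteq {\rm EX}_3(n,C_{2\ell+1})$ will be obtained by a sandwich argument: the unique extremal graph from the theta problem serves as a common optimum for both the spectral and edge-count problems for $C_{2\ell+1}$.

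First I would set $q=2\ell$ and $r=2\ell+1$, so that $q\ge 2$ is even and $r\ge 5$ is odd with $q\ge 4$; this is precisely the last branch of Theorems \ref{THM1.2} and \ref{THM1.3}. The key structural observation is that $\theta(1,2\ell,2\ell+1)$ consists of three internally disjoint paths joining two fixed vertices, of lengths $1$, $2\ell$, $2\ell+1$; the union of the first two already forms a copy of $C_{2\ell+1}$, so $C_{2\ell+1}\subseteq\theta(1,2\ell,2\ell+1)$. Consequently every $C_{2\ell+1}$-free graph is automatically $\theta(1,2\ell,2\ell+1)$-free, a point the authors flag just before the corollary.

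Next I would verify that $K^*:=K_{\lceil(n-2)/2\rceil,\lfloor(n-2)/2\rfloor}\circ K_3$ is itself $C_{2\ell+1}$-free and non-bipartite. Non-bipartiteness is immediate from the attached triangle. For $C_{2\ell+1}$-freeness, note that the two pendant vertices of the attached $K_3$ have neighborhoods contained in $\{$the identification vertex$\}\cup\{$the other pendant$\}$, so any cycle meeting $K_3\setminus K_{a,b}$ is the triangle itself; the remaining cycles lie inside the bipartite graph $K_{a,b}$ and are therefore even. Hence the only odd cycle in $K^*$ is $C_3$, and $K^*$ is $C_{2\ell+1}$-free for every $\ell\ge 2$.

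Finally I would execute the sandwich. For the spectral part, let $G\in{\rm SPEX}_3(n,C_{2\ell+1})$. Since $G$ is $C_{2\ell+1}$-free and non-bipartite, the containment above makes $G$ a competitor in the theta problem, so Theorem \ref{THM1.3} gives $\rho(G)\le\rho(K^*)$. Conversely, because $K^*$ is itself $C_{2\ell+1}$-free and non-bipartite, it is a competitor in the cycle problem, forcing $\rho(G)\ge\rho(K^*)$. Equality together with the uniqueness clause of Theorem \ref{THM1.3} yields ${\rm SPEX}_3(n,C_{2\ell+1})=\{K^*\}$. The same argument with edge counts, using Theorem \ref{THM1.2}, shows $K^*\in{\rm EX}_3(n,C_{2\ell+1})$, which is all that is needed to conclude ${\rm SPEX}_3(n,C_{2\ell+1})\subseteq{\rm EX}_3(n,C_{2\ell+1})$, i.e.\ Conjecture \ref{CONJ1.1} for $H=C_{2\ell+1}$. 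There is no real obstacle in this derivation once Theorems \ref{THM1.2} and \ref{THM1.3} are in hand; the only point requiring a moment of care is the verification that $K^*$ contains no long odd cycles, which reduces to the trivial observation about the degrees of the two pendant triangle vertices.
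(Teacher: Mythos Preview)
Your proposal is correct and follows essentially the same route as the paper: observe that $C_{2\ell+1}\subseteq\theta(1,2\ell,2\ell+1)$ so that $C_{2\ell+1}$-free graphs are $\theta(1,2\ell,2\ell+1)$-free, verify that $K_{\lceil (n-2)/2\rceil,\lfloor (n-2)/2\rfloor}\circ K_3$ is itself $C_{2\ell+1}$-free, and then use Theorems~\ref{THM1.2} and~\ref{THM1.3} (the case $q\ge 4$, $r$ odd) via the sandwich argument to conclude. Your write-up is in fact more detailed than the paper's, which simply asserts the $C_{2\ell+1}$-freeness of $K_{\lceil (n-2)/2\rceil,\lfloor (n-2)/2\rfloor}\circ K_3$ without the explicit cycle analysis you provide.
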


\section{Proof of Theorem \ref{THM1.2}}\label{section2}

Given a simple graph $G$,
we use $V(G)$ to denote the vertex set, $E(G)$ the edge set, and $e(G)$ the number of edges, respectively.
For a vertex $v\in V(G)$,
we denote by $N_G(v)$ its neighborhood
and set $d_G(v)=|N_G(u)|$.
For an edge subset $M\subseteq E(G)$,
we define $G-M=(V(G),E(G)\setminus M)$.
Given  a vertex $v\in V(G)$ and  two disjoint vertex subsets $S$ and $T$.
Set $N_S(v)=N_G(v)\cap S$ and $d_S(v)=|N_S(v)|$.
Let $G[S]$ be the subgraph induced by $S$ and set $G-S=G[V(G)\setminus S]$.
Let $G[S,T]$ be the bipartite subgraph on the vertex set $S\cup T$
which consists of all edges with one endpoint in $S$ and the other in $T$.
When there is no ambiguity, we write $e(S)$ and $e(S,T)$ respectively for $e(G[S])$ and $e(G[S,T])$.

%If there is no danger of ambiguity, we drop the notation $H$.

In this section, we first list some lemmas which will be used in the proof of Theorem \ref{THM1.2}.

\begin{lem} \label{LEM2.1}\emph{(\cite{Simonovits1968})}
Let $r\geq 2$ and $H$ be a color-critical graph with $\chi(H)=r+1$.
For sufficiently large $n$, ${\rm EX}(n,H)=\{T_{n,r}\}$.
\end{lem}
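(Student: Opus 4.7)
The plan is the classical three-step argument: an approximate count from Erd\H{o}s--Stone--Simonovits, a stability statement, and a cleanup step that exploits color-criticality to kill any within-part edges, followed by a balancing argument for uniqueness. The lower bound $\mathrm{ex}(n,H)\geq e(T_{n,r})$ is automatic since $T_{n,r}$ is $r$-chromatic while $\chi(H)=r+1$, so $T_{n,r}$ is $H$-free. Combined with the Erd\H{o}s--Stone--Simonovits theorem, $\mathrm{ex}(n,H)=(1-1/r+o(1))\binom{n}{2}$.

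Next I would invoke the Erd\H{o}s--Simonovits stability theorem: for every $\epsilon>0$ there is $\delta>0$ such that any $H$-free graph $G$ on $n$ vertices with $e(G)\geq (1-1/r-\delta)\binom{n}{2}$ can be obtained from $T_{n,r}$ by adding and removing at most $\epsilon n^2$ edges. Apply this to an extremal $G$ with $e(G)\geq e(T_{n,r})$, and fix an $r$-partition $V_1,\dots,V_r$ of $V(G)$ maximizing $\sum_{i<j}e(V_i,V_j)$. Stability together with the choice of partition forces $|V_i|=n/r+o(n)$ and, for all but $o(n)$ vertices $w\in V_i$, $d_{V_j}(w)\geq |V_j|-o(n)$ for every $j\neq i$; moreover every vertex $w\in V_i$ satisfies $d_{V_j}(w)\geq d_{V_i}(w)$ for each $j\neq i$, else moving $w$ across the partition increases $\sum_{i<j}e(V_i,V_j)$.

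The main step is to show $e(V_i)=0$ for every $i$, so that $G$ is $r$-partite. Suppose for contradiction there is $uv\in E(G)$ with $u,v\in V_1$. Using color-criticality, pick an edge $xy\in E(H)$ with $\chi(H-xy)=r$; note that in any proper $r$-coloring of $H-xy$ the vertices $x$ and $y$ must receive the same color (otherwise that coloring would properly color $H$, contradicting $\chi(H)=r+1$). Label the color classes so that $A_1\ni\{x,y\}$ and $A_2,\dots,A_r$ are the others. By the optimality of the partition, both $u$ and $v$ have cross-degree $|V_j|-o(n)$ in every $V_j$ with $j\geq 2$, and the same holds for the vast majority of each $V_i$. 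Then embed $H$ greedily into $G$ by sending $x\mapsto u$, $y\mapsto v$, and processing the remaining vertices of $A_1,\dots,A_r$ one at a time, at each step choosing a vertex in the corresponding $V_i$ that is adjacent to all previously embedded neighbors. Since $|V(H)|$ is a fixed constant while the available pool in each $V_j$ has size $\Omega(n)$, the greedy choices do not run out, producing a copy of $H$ in $G$ and contradicting $H$-freeness. Once $G$ is $r$-partite with parts $V_1,\dots,V_r$, we have $e(G)\leq \sum_{i<j}|V_i||V_j|$, which is maximized uniquely by the balanced partition, so $G=T_{n,r}$.

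The main obstacle I anticipate is the quantitative control required in the embedding step: one must couple the stability parameters, the constant $|V(H)|$, and the threshold on $n$ so that after using up $u,v$ and the already-placed vertices at each stage, there always remain vertices with essentially full cross-degree. Making the error terms $o(n)$ uniform over all $O(1)$-many embedding steps is what forces "sufficiently large $n$" and is the only place where the proof becomes technical rather than structural.
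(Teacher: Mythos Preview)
The paper does not give a proof of this lemma; it is quoted from Simonovits \cite{Simonovits1968} as a classical result and used as a black box. Your outline follows the standard stability-method proof and is correct in spirit, but one step is not justified as written. You assert that ``by the optimality of the partition, both $u$ and $v$ have cross-degree $|V_j|-o(n)$ in every $V_j$ with $j\geq 2$,'' yet partition optimality only yields $d_{V_j}(u)\geq d_{V_1}(u)$ for each $j\neq 1$; it says nothing about $d_{V_j}(u)$ being close to $|V_j|$. If $u$ (or $v$) happens to have total degree far below $(1-1/r)n$, the greedy embedding of $H$ anchored at $u,v$ can fail, and nothing you have stated rules this out.

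The standard repair is a prior minimum-degree step: in an extremal $H$-free graph every vertex has degree at least $\delta(T_{n,r})$, since otherwise deleting such a vertex leaves an $H$-free graph on $n-1$ vertices with more than $e(T_{n-1,r})$ edges, contradicting the inductive hypothesis (or, non-inductively, iterating deletions contradicts Erd\H{o}s--Stone). Once $d_G(u)\geq (1-1/r-o(1))n$, the optimality inequality $d_{V_j}(u)\geq d_{V_1}(u)$ together with $|V_i|=n/r+o(n)$ and $d_{V_j}(u)\leq |V_j|$ for $j\neq 1$ forces each $d_{V_j}(u)$ to be within $o(n)$ of $|V_j|$, and your greedy embedding then goes through. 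You correctly flagged the embedding bookkeeping as technical, but the logically prior minimum-degree argument is the actual missing ingredient.
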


\begin{lem}\label{LEM2.2}\emph{(\cite{Caccetta2002})}
Let $G$ be a non-bipartite graph on $n$ vertices with no odd cycle of length at most $2k+1$.
Then $e(G)\leq \lfloor\frac{(n-2k+1)^2}{4}\rfloor+2k-1$.
Moreover, ${\rm EX}_{3}(n,C_3)=\mathcal{H}(n)$.
\end{lem}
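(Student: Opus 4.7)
The plan is to argue by a structural analysis of a shortest odd cycle in $G$. Since $G$ is non-bipartite and contains no odd cycle of length at most $2k+1$, its odd girth is at least $2k+3$. Let $C=v_0v_1\cdots v_{2\ell}v_0$ be a shortest odd cycle, of length $2\ell+1\geq 2k+3$, and set $R=V(G)\setminus V(C)$. Since the target bound $\lfloor(n-2k+1)^2/4\rfloor+2k-1$ is tightest when $\ell=k+1$, and becomes comparatively easier to establish when the shortest odd cycle is longer, I would focus on $\ell=k+1$ and absorb the longer-cycle cases into the slack of the final estimate.

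The first key step is the classical \emph{short-chord neighbourhood lemma}: for every $u\in R$,
\[
|N_G(u)\cap V(C)|\leq 2,
\]
with equality only when the two neighbours lie at distance exactly $2$ on $C$. Indeed, two neighbours $v_i,v_j$ of $u$ split $C$ into arcs of lengths $d$ and $2\ell+1-d$; together with the edges $uv_i,uv_j$, they produce cycles of lengths $d+2$ and $2\ell+3-d$, of opposite parities. The odd one must have length at least $2\ell+1$ by minimality of $C$, which after a parity check forces $d$ even and $d=2$. A short case analysis on three mutually distance-$2$ neighbours (impossible already on $C_5$ or on a cycle of length $\geq 7$) completes the claim.

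Next I would partition $e(G)=e(C)+e(V(C),R)+e(R)$ and bound each piece. We have $e(C)=2\ell+1$, and by the neighbourhood lemma $e(V(C),R)\leq 2|R|$. For $e(R)$ I would induct on $n$: either $G[R]$ is bipartite, giving $e(R)\leq\lfloor|R|^2/4\rfloor$, or $G[R]$ is itself non-bipartite with odd girth at least $2k+3$, triggering the inductive bound. The delicate step is a joint refinement of $e(V(C),R)+e(R)$: each $u\in R$ with two neighbours on $C$ is ``paired'' with a specific vertex of $C$ via the distance-$2$ constraint, so the union of $R$-to-$C$ edges and the bipartition of $G[R]$ can be merged with a near-$2$-colouring of $C$ minus one edge, recasting everything as a bipartite graph on $n-2k+1$ vertices plus an extra $2k-1$ edges contributed by an attached odd substructure. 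Summing and optimising over $\ell\geq k+1$ yields the required inequality. I expect this joint accounting to be the main obstacle, as the bipartition on $R$ must be shown compatible with the induced colouring on $V(C)$, and this is precisely where equality is pinned down.

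For the second assertion, $\mathrm{EX}_3(n,C_3)=\mathcal{H}(n)$, I would specialise equality in the above argument to $k=1$. Then $C=C_5$, and the bipartite skeleton on $n-1$ vertices collapses to a balanced complete bipartite graph $K_{\lceil n/2\rceil+b,\lfloor n/2\rfloor-b}$ together with one extra edge $u_1u_2$ in the larger part (making the graph non-bipartite and accounting for the additive $+1$). Triangle-freeness then forces, for each $v$ in the smaller part, exactly one of $\{u_1v,u_2v\}$ to be missing, which is precisely the construction of Definition~\ref{DEF1.1}. A direct verification confirms that every graph in $\mathcal{H}(n)$ is $C_3$-free, non-bipartite, and attains exactly $\lfloor(n-1)^2/4\rfloor+1$ edges, closing the characterisation.
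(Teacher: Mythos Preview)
The paper does not prove Lemma~\ref{LEM2.2}; it is quoted verbatim from Caccetta and Jia~\cite{Caccetta2002} and used as a black box throughout Section~\ref{section2}. There is therefore no ``paper's own proof'' to compare your attempt against.

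On the merits of your sketch: the inequality part is essentially correct, and in fact simpler than you seem to anticipate. With $C$ a shortest odd cycle of length $2\ell+1\ge 2k+3$ and $R=V(G)\setminus V(C)$, your three ingredients (no chords in $C$; $|N_C(u)|\le 2$ for $u\in R$ with the distance-$2$ constraint; and $e(R)\le\lfloor|R|^2/4\rfloor$, strictly when $G[R]$ is non-bipartite by induction) already give
\[
e(G)\le(2\ell+1)+2(n-2\ell-1)+\Big\lfloor\frac{(n-2\ell-1)^2}{4}\Big\rfloor,
\]
and a direct computation shows this equals $\lfloor(n-2k+1)^2/4\rfloor+2k-1$ when $\ell=k+1$ and is strictly smaller for $\ell>k+1$. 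The ``joint refinement'' you flag as the main obstacle is not needed for the bound.

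Your sketch does have a genuine gap in the equality characterisation $\mathrm{EX}_3(n,C_3)=\mathcal{H}(n)$. From equality you correctly read off $\ell=k+1$ (so $C\cong C_5$), $G[R]\cong K_{\lceil(n-5)/2\rceil,\lfloor(n-5)/2\rfloor}$, and $|N_C(u)|=2$ for every $u\in R$. What remains is to determine how the five distance-$2$ pairs on $C_5$ are distributed over $R$: triangle-freeness forces $N_C(u)\cap N_C(u')=\varnothing$ whenever $u,u'$ lie in opposite parts of $G[R]$, and chasing this constraint through the five possible pairs is exactly what pins the structure down to Definition~\ref{DEF1.1}. Your final paragraph asserts the outcome (``collapses to a balanced complete bipartite graph together with one extra edge'') without carrying out this case analysis; that is the substantive content of the Caccetta--Jia argument you would still need to supply.
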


The classical Stability Lemma was given by Erd\H{o}s \cite{Erdos-1967,Erdos-1968}
and Simonovits \cite{Simonovits1968},
which plays a very important role in extremal graph theory.

\begin{lem} \label{LEM2.3}
\emph{(\cite{Erdos-1967,Erdos-1968,Simonovits1968})}
Let $H$ be a graph with $\chi(H)=r+1\geq 3$.
For every $\varepsilon>0$,
there exist a constant $\delta>0$ and an integer $n_0$ such that
if $G$ is an $H$-free graph on $n\geq n_0$ vertices with $e(G)\geq (\frac{r-1}{r}-\delta)\frac{n^2}{2}$,
then $G$ can be obtained from $T_{n,r}$ by adding and deleting at most $\varepsilon n^2$ edges.
\end{lem}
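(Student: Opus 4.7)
My plan is to prove the stability lemma by combining the Szemer\'edi Regularity Lemma with an embedding (counting) lemma for blow-ups of $K_{r+1}$, together with the stability form of Tur\'an's theorem applied to the reduced graph. The high-level picture is: regularize $G$, observe that its reduced graph must be $K_{r+1}$-free (or else $H$ embeds), apply Tur\'an-stability to the reduced graph, and pull the resulting near-$r$-partite structure back to $G$.

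First, I will apply the Regularity Lemma to $G$ with a small parameter $\varepsilon' \ll \varepsilon$, producing an equitable partition $V(G) = V_0 \cup V_1 \cup \cdots \cup V_k$ with $|V_0| \leq \varepsilon' n$ and all but $\varepsilon' \binom{k}{2}$ pairs $(V_i,V_j)$ being $\varepsilon'$-regular. Define the reduced graph $R$ on $[k]$ by inserting edge $ij$ whenever $(V_i,V_j)$ is $\varepsilon'$-regular of density at least $\eta$, for a fixed $\eta = \eta(H) > 0$. Next, I will show $R$ is $K_{r+1}$-free: otherwise, the Counting/Embedding Lemma guarantees a balanced blow-up $K_{r+1}(|V(H)|) \subseteq G$, which contains $H$ since $\chi(H) = r+1$, contradicting $H$-freeness. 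By Tur\'an's theorem, $e(R) \leq (1-1/r)k^2/2$.

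Then, bucketing $E(G)$ into intra-class, $V_0$-incident, irregular-pair, sparse regular-pair, and dense regular-pair edges, one obtains the routine bound $e(G) \leq (1-1/r)n^2/2 + C(\varepsilon' + \eta + 1/k)n^2$ for some absolute constant $C$. Combining with the hypothesis $e(G) \geq (\frac{r-1}{r} - \delta)\frac{n^2}{2}$ forces $e(R)$ to lie within $O(\delta + \varepsilon' + \eta + 1/k)k^2$ of $e(T_{k,r})$, and forces the aggregate number of intra-class, irregular, and sparse regular-pair edges in $G$ to be small. Applying the stability form of Tur\'an's theorem (the classical Erd\H{o}s-Simonovits result, itself provable by induction on $r$) to $R$, I obtain a partition $\{I_1,\ldots,I_r\}$ of $[k]$ with $|I_j| \approx k/r$ such that $R$ differs from the Tur\'an graph with parts $I_j$ in at most $\sigma k^2$ edges, where $\sigma \to 0$ with the slack parameters. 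Lifting to $V(G)$ by setting $U_j = \bigcup_{i \in I_j} V_i$, distributing the $V_0$-vertices arbitrarily into the $U_j$, and rebalancing to match the sizes of $T_{n,r}$, I obtain an $r$-partition whose Tur\'an graph differs from $G$ by at most $\varepsilon n^2$ edges, provided $\varepsilon', \eta, 1/k, \delta$ are all chosen small enough relative to $\varepsilon$.

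The principal technical obstacle is the quantitative bookkeeping of the cascade of small parameters $\varepsilon', \eta, k^{-1}, \delta, \sigma$ so that every error term combines to at most $\varepsilon n^2$; the genuine content is the invocation of Tur\'an stability for $R$, which itself requires a separate inductive argument (based on supersaturation of $(r+1)$-cliques in graphs above the Tur\'an density). A regularity-free alternative, following the original route of Erd\H{o}s and Simonovits, would iteratively delete vertices of small degree to expose a $K_{r+1}$-free subgraph of near-maximum minimum degree, then extract the approximate $r$-partition via Andr\'asfai-Erd\H{o}s-S\'os-type arguments or Zykov symmetrization; this is more elementary but less transparent, whereas the regularity-based approach handles arbitrary $H$ with $\chi(H) = r+1$ uniformly.
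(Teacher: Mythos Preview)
The paper does not prove Lemma~\ref{LEM2.3}; it is quoted as the classical Erd\H{o}s--Simonovits Stability Lemma with citations to \cite{Erdos-1967,Erdos-1968,Simonovits1968} and used as a black box. So there is no ``paper's own proof'' to compare against.

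Your regularity-based outline is a correct and by-now-standard route to the result. The reduction to $K_{r+1}$-freeness of the reduced graph via the embedding lemma is valid precisely because $\chi(H)=r+1$ guarantees $H\subseteq K_{r+1}(|V(H)|)$; the edge-bucketing inequality and the lifting of the reduced partition to $G$ are routine. You correctly flag the one genuine dependency: you invoke Tur\'an stability (the $H=K_{r+1}$ case) on the reduced graph, so that special case must be established separately --- this is not circular, and your remark about proving it by induction on $r$ or via supersaturation is accurate. The original proofs cited by the paper predate Szemer\'edi's lemma and proceed along the elementary lines you sketch in your final paragraph (iterated vertex deletion to reach large minimum degree, then a direct structural argument); your regularity approach is heavier machinery but handles all $H$ with $\chi(H)=r+1$ uniformly and transparently, which is the trade-off you describe.
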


\begin{definition}\label{DEF2.1}
Fix integers $q,r\geq 2$ with even $q$.
A graph $G$ is called $\theta(1,q,r)$-good if
$G$ is an $n$-vertex $F$-free non-bipartite graph and is obtained from $T_{n,2}$
by adding and deleting at most $\varepsilon n^2$ edges, where
\begin{equation}\label{EQU-1}
k=|V(F)|~\text{and}~\max\{40k\varepsilon^{\frac12}, 400\varepsilon^{\frac12}\}<1.
\end{equation}
\end{definition}

Let $G$ and $\theta(1,q,r)$ be defined as in Definition \ref{DEF2.1}.
In the following, we shall prove seven claims
for an arbitrary  $\theta(1,q,r)$-good graph $G$ of sufficiently large order $n$.

\begin{claim}\label{CLA2.1}
For sufficiently large $n$, $e(G)\geq (\frac{1}{4}-\varepsilon){n^2}$.
$G$ admits a partition $V(G)=V_1\cup V_2$ such that $e(V_1,V_2)$ is maximum,
$e(V_1)+e(V_2)\leq \varepsilon n^2$ and $\big||V_i|-\frac{n}{2}\big|\leq 2\varepsilon^{\frac12} n$ for each $i\in \{1,2\}.$
\end{claim}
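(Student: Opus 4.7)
The plan is to verify the three assertions of the claim in order: the edge count lower bound, the smallness of $e(V_1)+e(V_2)$ for the max-cut partition, and the balance estimate on $|V_i|$. The only hypothesis I will really use is that $G$ differs from $T_{n,2}$ by at most $\varepsilon n^2$ edges in symmetric difference; non-bipartiteness and $F$-freeness play no role in this particular claim.

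For the first assertion, I would write $e(G)\ge e(T_{n,2})-\varepsilon n^{2}\ge \lfloor n^{2}/4\rfloor-\varepsilon n^{2}$ (since the only way to lose edges relative to $T_{n,2}$ is by deletion, and there are at most $\varepsilon n^{2}$ such deletions). For $n$ large enough, the floor correction is negligible and this is at least $(\tfrac14-\varepsilon)n^{2}$.

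For the second assertion, let $(A,B)$ denote the bipartition of $T_{n,2}$ and note that every edge of $G$ lying inside $A$ or inside $B$ had to be \emph{added} when transforming $T_{n,2}$ into $G$, so $e_{G}(A)+e_{G}(B)\le\varepsilon n^{2}$. Now pick a partition $V(G)=V_1\cup V_2$ maximizing $e(V_1,V_2)$; since $e(G)=e(V_1,V_2)+e(V_1)+e(V_2)$, maximizing the cut is the same as minimizing $e(V_1)+e(V_2)$, and comparing with the reference partition $(A,B)$ yields
$$e(V_1)+e(V_2)\le e_{G}(A)+e_{G}(B)\le\varepsilon n^{2}.$$

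For the third assertion, without loss of generality $|V_1|\ge|V_2|$; write $|V_1|=n/2+s$ and $|V_2|=n/2-s$ with $s\ge 0$. Then $e(V_1,V_2)\le|V_1||V_2|=n^{2}/4-s^{2}$, while on the other hand $e(V_1,V_2)=e(G)-e(V_1)-e(V_2)\ge(\tfrac14-\varepsilon)n^{2}-\varepsilon n^{2}=(\tfrac14-2\varepsilon)n^{2}$ by the first two assertions. Combining gives $s^{2}\le 2\varepsilon n^{2}$, hence $s\le\sqrt{2\varepsilon}\,n<2\varepsilon^{1/2}n$, as required.

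The argument is essentially routine; the only nontrivial step is recognizing that the max-cut property of $(V_1,V_2)$ is exactly the tool needed to transfer the trivial bound on $e_G(A)+e_G(B)$ (from the reference bipartition of $T_{n,2}$) over to $e(V_1)+e(V_2)$. After that, the balance estimate follows from the standard inequality $|V_1||V_2|\le n^{2}/4-s^{2}$.
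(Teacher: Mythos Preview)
Your proof is correct and follows essentially the same approach as the paper: use the $T_{n,2}$ bipartition $(A,B)$ as a reference to bound $e_G(A)+e_G(B)\le\varepsilon n^2$, transfer this to the max-cut partition via minimality of $e(V_1)+e(V_2)$, and then deduce the balance estimate from $|V_1||V_2|=n^2/4-s^2$ together with the edge lower bound. The paper's argument is identical up to notation (it calls the reference partition $(U_1,U_2)$ and writes the final inequality as $e(G)\le n^2/4-\alpha^2+\varepsilon n^2$ rather than isolating $e(V_1,V_2)$, but the computation is the same).
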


\begin{proof}

By Definition \ref{DEF2.1}, we have $e(G)\geq (\frac{1}{4}-\varepsilon){n^2}$,
and there exists a partition $V(G)=U_1\cup U_2$ such that
$\lfloor\frac{n}{2}\rfloor\le |U_1|\leq|U_2|\leq \lceil\frac{n}{2}\rceil$ and $e(U_1)+e(U_2)\leq \varepsilon n^2$.
We now select a new partition $V(G)=V_1\cup V_2$
such that $e(V_1,V_2)$ is maximum.
Then $e(V_1)+e(V_2)$ is minimum, and so
\begin{center}
$e(V_1)+e(V_2)\leq e(U_1)+e(U_2)\leq \varepsilon n^2.$
\end{center}
On the other hand, assume that $|V_1|=\frac{n}{2}+\alpha$ for some $\alpha$, then
$|V_1||V_2|=\frac{n^2}{4}-\alpha^2$.
Thus,
\begin{center}
  $e(G)= e(V_1,V_2)+e(V_1)+e(V_2)\leq \frac{n^2}{4}-\alpha^2+\varepsilon n^2.$
\end{center}
Combining $e(G)\geq \frac{n^2}{4}-\varepsilon n^2$,
we get $\alpha^2\leq 2\varepsilon n^2$,
and so $||V_2|-\frac{n}{2}|=||V_1|-\frac{n}{2}|=|\alpha|\leq 2\varepsilon^{\frac12} n$.
\end{proof}

In the following, we shall define two vertex subsets $S$ and $W$ of $G$.

\begin{claim}\label{CLA2.2}
Let $S=\{v\in V(G)~|~d_G(v)\leq\big(\frac12-4\varepsilon^{\frac12}\big)n\}.$
Then $|S|\leq \varepsilon^{\frac12} n$.
\end{claim}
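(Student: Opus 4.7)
My plan is a double-counting / defect argument against the max-cut bipartition $V(G)=V_1\cup V_2$ supplied by Claim~\ref{CLA2.1}. For each $v\in V_i$, define the cross-defect
$\mu(v)=|V_{3-i}|-d_{V_{3-i}}(v)\ge 0$,
which measures how far $v$ is from being joined to every vertex on the opposite side of the cut.

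First I would obtain a global upper bound on $\sum_{v}\mu(v)$. A direct double count gives $\sum_{v\in V(G)}\mu(v)=2|V_1||V_2|-2\,e(V_1,V_2)$. Combining $|V_1||V_2|\le n^2/4$ with $e(V_1,V_2)=e(G)-e(V_1)-e(V_2)\ge (\tfrac14-2\varepsilon)n^2$ (which follows by plugging the two bounds of Claim~\ref{CLA2.1} into this identity) yields $\sum_v\mu(v)\le 4\varepsilon n^2$.

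Next I would obtain a pointwise lower bound on $\mu(v)$ for each $v\in S$: if $v\in V_i$, then
$\mu(v)\ge |V_{3-i}|-d_G(v)\ge \bigl(\tfrac{n}{2}-2\varepsilon^{1/2}n\bigr)-\bigl(\tfrac12-4\varepsilon^{1/2}\bigr)n=2\varepsilon^{1/2}n$,
using the size bound from Claim~\ref{CLA2.1} and the defining inequality of $S$.

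Combining the two estimates gives $2\varepsilon^{1/2}n\cdot|S|\le\sum_{v\in S}\mu(v)\le 4\varepsilon n^2$, so $|S|=O(\varepsilon^{1/2}n)$, and the stated bound $|S|\le\varepsilon^{1/2}n$ follows with slightly sharper bookkeeping (for instance, refining $|V_1||V_2|\le n^2/4$ to $|V_1||V_2|\le n^2/4-\alpha^2$ with $\alpha^2\le 2\varepsilon n^2$ from the proof of Claim~\ref{CLA2.1}). The whole argument is essentially an accounting exercise; the one mildly delicate point will be keeping the constants tight enough to reach the stated inequality, but no real obstacle arises.
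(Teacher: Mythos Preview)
Your defect-counting approach is a genuinely different route from the paper's. The paper argues by contradiction: it supposes $|S|>\varepsilon^{1/2}n$, removes a subset $S'\subseteq S$ of size $\lfloor \varepsilon^{1/2}n\rfloor$, lower-bounds $e(G-S')\ge(\tfrac14-\varepsilon)n^2-\varepsilon^{1/2}n\cdot(\tfrac12-4\varepsilon^{1/2})n$, and contradicts the Tur\'an-type upper bound $e(G-S')\le\lfloor(n-|S'|)^2/4\rfloor$ coming from Lemma~\ref{LEM2.1}. Your method is more elementary in that it never invokes the extremal result on a smaller graph and works entirely inside the max-cut partition.

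However, the constants in your write-up do not close. With the inputs you actually cite---$|V_1||V_2|\le n^2/4$ and $e(V_1,V_2)\ge(\tfrac14-2\varepsilon)n^2$---you obtain $\sum_v\mu(v)\le 4\varepsilon n^2$, hence only $|S|\le 2\varepsilon^{1/2}n$. Your proposed sharpening via $|V_1||V_2|= n^2/4-\alpha^2$ is vacuous, since $\alpha$ may well be~$0$. What actually rescues the constant is a fact you did not invoke: because $(V_1,V_2)$ is a maximum cut, $e(V_1,V_2)\ge e(U_1,U_2)\ge\lfloor n^2/4\rfloor-\varepsilon n^2$, where $U_1,U_2$ are the balanced parts supplied by Definition~\ref{DEF2.1} (the last inequality just records that at most $\varepsilon n^2$ edges of $T_{n,2}$ were deleted). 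This halves the defect sum to $\sum_v\mu(v)\le 2\varepsilon n^2+O(1)$ and recovers $|S|\le\varepsilon^{1/2}n$ for large~$n$. As it stands, your argument only establishes the weaker bound $|S|\le 2\varepsilon^{1/2}n$.
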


\begin{proof}
Suppose to the contrary that $|S|>\varepsilon^{\frac12} n$.
Then there exists a subset $S'\subseteq S$ with $|S'|=\lfloor \varepsilon^{\frac12} n\rfloor$.
Combining these with Claim \ref{CLA2.1}  that $e(G)\geq  (\frac14-\varepsilon\big)n^2$,  we get
\begin{equation}\label{EQU-2}
e(G-S')\geq e(G)-\sum_{v\in S'}d_G(v)
\geq \Big(\frac{1}{4}-\varepsilon\Big)n^2-\varepsilon^{\frac12} n\Big(\frac{1}{2}-4\varepsilon^{\frac12}\Big)n
=\frac{1}{4}\big(1-2\varepsilon^{\frac12}+12\varepsilon\big)n^2.
\end{equation}
Set $n'=|V(G-S')|=n-\lfloor\varepsilon^{\frac12}  n\rfloor$.
Then $(1-\varepsilon^{\frac12})n\leq n'< (1-\varepsilon^{\frac12})n+1$.
By sufficiently large $n$ and the chromatic critical edge theorem,
we have
$$e(G-S')\leq e(T_{n',2})\leq \frac{n'^2}{4}<\frac{1}{4}(1-2\varepsilon^{\frac12}+2\varepsilon)n^2,$$
which contradicts \eqref{EQU-2}.
Thus, $|S|\leq \varepsilon^{\frac12} n$.
\end{proof}

\begin{claim}\label{CLA2.3}
Let $W=W_1\cup W_2$, where $W_i=\{v\in V_i~|~d_{V_i}(v)\geq 4\varepsilon^{\frac12} n\}$ for $i\in \{1,2\}$.
Then $|W|\leq \frac12 \varepsilon^{\frac12} n$.
\end{claim}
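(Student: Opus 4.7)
The plan is to use a direct double-counting argument within each part $V_i$, leveraging the edge-count bound $e(V_1)+e(V_2)\leq \varepsilon n^2$ established in Claim \ref{CLA2.1}. Each vertex $v\in W_i$ contributes $d_{V_i}(v)\geq 4\varepsilon^{1/2}n$ to the internal degree sum of $V_i$, while the handshake lemma gives $\sum_{v\in V_i} d_{V_i}(v)=2e(V_i)$.

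First I would handle each $i\in\{1,2\}$ separately. Restricting the degree sum in $V_i$ to the set $W_i$ of high internal-degree vertices yields
\[
4\varepsilon^{1/2}n\cdot |W_i|\;\leq\;\sum_{v\in W_i} d_{V_i}(v)\;\leq\;\sum_{v\in V_i} d_{V_i}(v)\;=\;2e(V_i).
\]
Then I would add these two inequalities for $i=1,2$ and substitute the bound $e(V_1)+e(V_2)\leq \varepsilon n^2$ from Claim \ref{CLA2.1}, obtaining
\[
4\varepsilon^{1/2}n\cdot |W|\;\leq\;2\bigl(e(V_1)+e(V_2)\bigr)\;\leq\;2\varepsilon n^2,
\]
which rearranges to $|W|\leq \tfrac{1}{2}\varepsilon^{1/2}n$, as required.

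There is no serious obstacle here; the statement is a straightforward consequence of Claim \ref{CLA2.1} together with the definition of $W_i$, and no additional structural input from $\theta(1,q,r)$-freeness or the partition being edge-maximal is needed beyond the already-established edge bound. The only minor care point is to ensure that $W_1$ and $W_2$ are disjoint (which is automatic since $V_1\cap V_2=\varnothing$), so that summing $|W_1|+|W_2|$ genuinely equals $|W|$.
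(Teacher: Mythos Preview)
Your proposal is correct and follows essentially the same argument as the paper: bound $\sum_{v\in W_i}d_{V_i}(v)$ below by $4\varepsilon^{1/2}n\,|W_i|$ and above by $2e(V_i)$, sum over $i$, and apply the bound $e(V_1)+e(V_2)\leq \varepsilon n^2$ from Claim~\ref{CLA2.1}. The paper's proof is line-for-line the same double-counting, so there is nothing to add.
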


\begin{proof}
For $i\in \{1,2\}$,
\begin{center}
  $2e(V_i)=\sum\limits_{v\in V_i}d_{V_i}(v)\ge
\sum\limits_{v\in W_i}d_{V_i}(v)\ge |W_i|\cdot 4\varepsilon^{\frac12} n.$
\end{center}
Combining this with Claim \ref{CLA2.1} gives
\begin{center}
  $\varepsilon n^2\geq e(V_1)+e(V_2)\geq (|W_1|+|W_2|) 2\varepsilon^{\frac12} n=2\varepsilon^{\frac12}|W| n.$
\end{center}
This yields that $|W|\leq \frac{1}{2}\varepsilon^{\frac12} n$.
\end{proof}

For every $i\in \{1,2\}$, denote by $\overline{V}_i=V_i\setminus (W\cup S)$.
For ease of calculation, we present the following two claims.

\begin{claim}\label{CLA2.4}
Let $i,j$ be integers with  $i\in\{1,2\}$ and $1\leq j\leq k$.
Let $u_0\in V(G)$ with $d_{V_{3-i}}(u_0)\geq 12\varepsilon^{\frac12} n$.\\
(i) If $u_1\in \overline{V}_i\setminus \{u_0\}$, then there exist at least $\frac12\varepsilon^{\frac12}n$ vertices in $\overline{V}_{3-i}$ adjacent to $u_0$ and $u_1$.\\
(ii) If $\{u_1,\dots,u_j\}\subseteq \overline{V}_i$,
then there exist at least $\varepsilon^{\frac12}n$ vertices in $\overline{V}_{3-i}$ adjacent to $u_1,\dots,u_j$.
\end{claim}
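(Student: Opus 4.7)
The plan is a straightforward inclusion–exclusion argument, drawing on the structural bounds from Claims \ref{CLA2.1}–\ref{CLA2.3} together with the two defining properties of $\overline{V}_i = V_i\setminus(W\cup S)$.

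First I will extract a uniform lower bound on $d_{V_{3-i}}(v)$ for every $v\in\overline{V}_i$: since $v\notin S$, the definition in Claim \ref{CLA2.2} gives $d_G(v) \geq \bigl(\tfrac12 - 4\varepsilon^{1/2}\bigr)n$, and since $v\notin W$, the definition in Claim \ref{CLA2.3} gives $d_{V_i}(v) < 4\varepsilon^{1/2}n$, so $d_{V_{3-i}}(v) > \bigl(\tfrac12 - 8\varepsilon^{1/2}\bigr)n$. Combined with $|V_{3-i}| \leq \tfrac{n}{2} + 2\varepsilon^{1/2}n$ from Claim \ref{CLA2.1}, this shows each such vertex has at most $10\varepsilon^{1/2}n$ non-neighbors inside $V_{3-i}$. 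I will also use $|W\cup S| \leq \tfrac32\varepsilon^{1/2}n$, which is immediate from Claims \ref{CLA2.2} and \ref{CLA2.3}.

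For (i), I apply the two-set Bonferroni bound to estimate the common neighborhood of $u_0$ and $u_1$ inside $V_{3-i}$: it is at least
\[
d_{V_{3-i}}(u_0) + d_{V_{3-i}}(u_1) - |V_{3-i}| \geq 12\varepsilon^{1/2}n + \bigl(\tfrac12 - 8\varepsilon^{1/2}\bigr)n - \bigl(\tfrac{n}{2} + 2\varepsilon^{1/2}n\bigr) = 2\varepsilon^{1/2}n.
\]
Removing the at most $\tfrac32\varepsilon^{1/2}n$ vertices of $W\cup S$ leaves at least $\tfrac12\varepsilon^{1/2}n$ common neighbors in $\overline{V}_{3-i}$, as required. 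For (ii), it is cleaner to count non-neighbors: the union $\bigcup_{s=1}^{j}\bigl(V_{3-i}\setminus N(u_s)\bigr)$ has size at most $10j\varepsilon^{1/2}n \leq 10k\varepsilon^{1/2}n$. Hence the common neighborhood of $u_1,\dots,u_j$ inside $\overline{V}_{3-i}$ has size at least $|V_{3-i}| - 10k\varepsilon^{1/2}n - |W\cup S|$, which the constraints $40k\varepsilon^{1/2} < 1$ and $400\varepsilon^{1/2} < 1$ from \eqref{EQU-1} force to be well above $\varepsilon^{1/2}n$.

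There is no genuine obstacle; the only thing to track carefully is that the additive error terms coming from $S$, $W$, and the slight imbalance between $|V_1|$ and $|V_2|$ remain comfortably within the slack afforded by \eqref{EQU-1}. The constants in Definition \ref{DEF2.1} are clearly tailored for exactly this bookkeeping, and the two bounds then drop out.
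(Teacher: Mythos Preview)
Your proposal is correct and matches the paper's proof almost exactly: the same derivation of $d_{V_{3-i}}(v) > (\tfrac12 - 8\varepsilon^{1/2})n$ for $v\in\overline{V}_i$, the same two-set Bonferroni bound for~(i), and for~(ii) your complementary count of non-neighbors is just the paper's inclusion--exclusion sum $\sum d_{V_{3-i}}(u_s) - (j-1)|V_{3-i}|$ rewritten, arriving at the same surplus of roughly $\tfrac{n}{2} - O(k\varepsilon^{1/2}n)$.
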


\begin{proof}
For any $u\in \overline{V}_i$,
by the definitions of $S$ and $W$,
we have $d_G(u)>(\frac{1}{2}-4\varepsilon^{\frac12})n$ and $d_{V_{i}}(u)<4\varepsilon^{\frac12} n$.
Then,
\begin{align}\label{EQU-3}
d_{V_{3-i}}(u)=d_G(u)-d_{V_{i}}(u)>\Big(\frac{1}{2}-8\varepsilon^{\frac12}\Big)n.
\end{align}

($i$) In view of $d_{V_{3-i}}(u_0)\geq 12\varepsilon^{\frac12} n$, \eqref{EQU-3}
and Claims \ref{CLA2.1}-\ref{CLA2.3}, we have
\begin{align*}
|\big(N_{V_{3-i}}(u_0)\cap N_{V_{3-i}}(u_1)\big)\setminus(W\cup S)|
&\geq |N_{V_{3-i}}(u_0)|+ |N_{V_{3-i}}(u_1)|-|V_{3-i}|-|W|-|S|\nonumber\\
&\geq 12\varepsilon^{\frac12}n+\Big(\frac{1}{2}-8\varepsilon^{\frac12}\Big)n
-\Big(\frac12 +2\varepsilon^{\frac12} \Big)n-\frac{3}{2}\varepsilon^{\frac12} n
= \frac12\varepsilon^{\frac12}n.
\end{align*}
Then, there exist at least $\frac12\varepsilon^{\frac12}n$ vertices in $\overline{V}_{3-i}$ adjacent to $u_0$ and $u_1$.

($ii$) Combining Claims \ref{CLA2.1}-\ref{CLA2.3} and \eqref{EQU-3}, we have
\begin{align*}
|\big(\bigcap_{s=1}^{j}N_{V_{3-i}}(u_s)\big)\setminus(W\cup S)|
&\geq \sum_{s=1}^{j}d_{V_{3-i}}(u_s)-(j-1)|V_{3-i}|-|W|-|S|\nonumber\\
&\geq j\Big(\frac{1}{2}-8\varepsilon^{\frac12}\Big)n
               -(j-1)\Big(\frac12 +2\varepsilon^{\frac12} \Big)n-\frac{3}{2}\varepsilon^{\frac12} n\nonumber\\
&\geq  \Big(\frac12-\big(10k-\frac12\big)\varepsilon^{\frac12}\Big)n\geq \varepsilon^{\frac12}n\nonumber,
\end{align*}
where the last inequality holds by \eqref{EQU-1},
as desired.
\end{proof}

\begin{claim}\label{CLA2.5}
Let $i,j$ be integers with $i\in\{1,2\}$ and $1\leq j\leq k$.
Let $u_0\in V(G)$ with $d_{V_{3-i}}(u_0)\geq 12\varepsilon^{\frac12} n$ and $R\subseteq V(G)\setminus \{u_0\}$ with $|R|\leq k$.\\
(i) For every $u_1\in \overline{V}_{i}\setminus (R\cup \{u_0\})$, $G-R$ contains a $(u_0,u_1)$-path of length $2j$.\\
(ii) For every $u_2\in N_{\overline{V}_{3-i}}(u_0)\setminus R$, $G- R$ contains a $(u_0,u_2)$-path of length $2j-1$.
\end{claim}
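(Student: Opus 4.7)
The plan is to construct the required path greedily from $u_0$, alternating between $\overline{V}_{3-i}$ and $\overline{V}_i$, and to invoke Claim \ref{CLA2.4}$(ii)$ at the final step so that the path closes at the designated target endpoint.

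For part $(i)$, in the easy case $j=1$, Claim \ref{CLA2.4}$(i)$ applied to $u_0$ and $u_1$ supplies at least $\frac12\varepsilon^{\frac12}n$ vertices of $\overline{V}_{3-i}$ adjacent to both, and since $|R|\leq k$ I may pick one outside $R$ to serve as the interior vertex of a $(u_0,u_1)$-path of length $2$. For $j\geq 2$, I would first select $z_1\in N_{\overline{V}_{3-i}}(u_0)\setminus(R\cup\{u_1\})$, which is possible because $d_{V_{3-i}}(u_0)\geq 12\varepsilon^{\frac12}n$ while $|W|+|S|+|R|+1\leq\frac32\varepsilon^{\frac12}n+k+1$ is much smaller for large $n$. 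Then I would extend by greedy choices: for $s=2,\ldots,2j-2$, pick $z_s\in\overline{V}_{3-i}$ (if $s$ is odd) or $z_s\in\overline{V}_i$ (if $s$ is even), adjacent to $z_{s-1}$ and disjoint from $R\cup\{u_0,u_1,z_1,\ldots,z_{s-1}\}$. Each such step is feasible because $z_{s-1}\in\overline{V}$, so its degree into the opposite part exceeds $(\frac12-8\varepsilon^{\frac12})n$ by inequality (\ref{EQU-3}), while the forbidden set has size at most $\frac32\varepsilon^{\frac12}n+3k$. Finally, since both $z_{2j-2}$ and $u_1$ belong to $\overline{V}_i$, Claim \ref{CLA2.4}$(ii)$ yields at least $\varepsilon^{\frac12}n$ common neighbors in $\overline{V}_{3-i}$; picking one outside the forbidden set as $z_{2j-1}$ completes a $(u_0,u_1)$-path $u_0z_1z_2\cdots z_{2j-1}u_1$ of length $2j$ inside $G-R$.

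Part $(ii)$ proceeds analogously. The case $j=1$ is immediate since $u_2\in N(u_0)$, so the single edge $u_0u_2$ is a path of length $1$. For $j\geq 2$, I would build $u_0y_1y_2\cdots y_{2j-3}$ by the same greedy scheme, ensuring $y_{2j-3}\in\overline{V}_{3-i}$, and then apply Claim \ref{CLA2.4}$(ii)$ to $\{y_{2j-3},u_2\}\subseteq\overline{V}_{3-i}$ (i.e. with the roles of $i$ and $3-i$ interchanged) to obtain at least $\varepsilon^{\frac12}n$ common neighbors in $\overline{V}_i$, from which I would select $y_{2j-2}$ avoiding the prior choices and $R$. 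This produces a $(u_0,u_2)$-path $u_0y_1\cdots y_{2j-2}u_2$ of length $2j-1$ in $G-R$.

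The only genuine obstacle is the bookkeeping at each greedy step: one must verify that the available degree minus the size of the union of $W\cup S\cup R$ with previously chosen vertices remains positive. This is routine given (\ref{EQU-1}), the bounds $|S|\leq\varepsilon^{\frac12}n$, $|W|\leq\frac12\varepsilon^{\frac12}n$, $|R|\leq k$, and the fact that the path length is at most $2k$, so the forbidden set has size $O(\varepsilon^{\frac12}n+k)$, which is negligible compared to the available degree $\Theta(n)$ whenever $n$ is sufficiently large.
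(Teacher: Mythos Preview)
Your argument is correct, and the bookkeeping you sketch goes through without trouble: at every greedy step the available degree (either $12\varepsilon^{1/2}n$ from the hypothesis on $u_0$, or $(\tfrac12-8\varepsilon^{1/2})n$ from \eqref{EQU-3} once you are at a vertex of $\overline{V}_i$ or $\overline{V}_{3-i}$) dominates the forbidden set $W\cup S\cup R\cup\{\text{previously chosen vertices, }u_0,\,\text{target}\}$, whose size is at most $\tfrac32\varepsilon^{1/2}n+3k+2$.

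The paper's proof follows a different route. Rather than growing the path one vertex at a time, it first uses Claim~\ref{CLA2.4}(i) to isolate a $k$-subset $\widetilde{V}_{3-i}\subseteq\overline{V}_{3-i}\setminus R$ of common neighbours of $u_0$ and $u_1$, and then uses Claim~\ref{CLA2.4}(ii) to find a $k$-subset $\widetilde{V}_i\subseteq\overline{V}_i$ (arranged to contain $u_1$) lying in the common neighbourhood of all of $\widetilde{V}_{3-i}$. This produces an induced $K_{k,k}$ disjoint from $R\cup\{u_0\}$, inside which one simply reads off a $(u_1',u_1)$-path of length $2j-1$ for some $u_1'\in\widetilde{V}_{3-i}$ and prepends the edge $u_0u_1'$. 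Part (ii) is handled the same way after first stepping from $u_2$ to a neighbour $u_2'\in\overline{V}_i$. Your greedy construction is more elementary and avoids the auxiliary $K_{k,k}$; the paper's approach is a bit cleaner in that all the ``avoidance'' counting is done in two applications of Claim~\ref{CLA2.4} rather than at each of up to $2k$ steps, and the existence of paths of every needed length is then immediate from the complete bipartite structure. Either argument is perfectly adequate here.
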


\begin{proof}
($i$) Since $\frac12\varepsilon^{\frac12}n\geq |R|+k$,
by Claim \ref{CLA2.4} ($i$),
there exists a $k$-subset $\widetilde{V}_{3-i}\subseteq \overline{V}_{3-i}\setminus R$
such that each vertex in $\widetilde{V}_{3-i}$ is adjacent to $u_0$ and $u_1$.
Clearly, $u_1\in \bigcap_{u\in \widetilde{V}_{3-i}}N_{\overline{V}_{i}}(u)$.
Since $\varepsilon^{\frac12}n\geq |R\cup \{u_0\}|+k$,
by Claim \ref{CLA2.4} ($ii$),
there exists a $k$-subset $\widetilde{V}_{i}$ such that $u_1\in \widetilde{V}_{i}\subseteq \big(\bigcap_{u\in \widetilde{V}_{3-i}}N_{\overline{V}_{i}}(u)\big)\setminus(R\cup \{u_0\})$.
Clearly, $G[\widetilde{V}_{i}, \widetilde{V}_{3-i}]\cong K_{k,k}$.
Take an arbitrary vertex $u_1'\in \widetilde{V}_{3-i}$.
Then, $G[\widetilde{V}_{i}, \widetilde{V}_{3-i}]$ contains a $(u_1',u_1)$-path $P^1$ of length $2j-1$.
Notice that $(\{u_0\}\cup R)\cap (\widetilde{V}_{i}\cup\widetilde{V}_{3-i})=\varnothing$ and  $u_0u_1'\in E(G)$.
Then, the subgraph $G-R$ contains a $(u_0,u_1)$-path of length $2j$.

($ii$) The case $j=1$ is straightforward.
It suffices to consider the case where $j\geq 2$.
Since $\varepsilon^{\frac12}n\geq |R\cup \{u_0\}|+1$,
by Claim \ref{CLA2.4} ($ii$),
 there exists a vertex $u_2'\in  \overline{V}_{i} \setminus (R\cup \{u_0\})$
 adjacent to $u_2$.
Since $\frac12\varepsilon^{\frac12}n\geq |R|+k$,
by Claim \ref{CLA2.4} ($i$),
 there exists a $k$-subset $\widetilde{V}_{3-i}\subseteq \overline{V}_{3-i}\setminus (R\cup \{u_2\})$ such that each vertex in $\widetilde{V}_{3-i}$ is adjacent to $u_0$ and $u_2'$.
Clearly, $u_2'\in \bigcap_{u\in \widetilde{V}_{3-i}}N_{\overline{V}_{i}}(u)$.
Since $\varepsilon^{\frac12}n\geq |R\cup \{u_0\}|+k$,
by Claim \ref{CLA2.4} ($ii$),
there exists a $k$-subset $\widetilde{V}_{i}$ such that $u_2'\in \widetilde{V}_{i}\subseteq \big(\bigcap_{u\in \widetilde{V}_{3-i}}N_{\overline{V}_{i}}(u)\big)\setminus(R\cup \{u_0\})$.
Clearly, $G[\widetilde{V}_{i}, \widetilde{V}_{3-i}]\cong K_{k,k}$.
Take an arbitrary vertex $u_3\in \widetilde{V}_{3-i}$.
Then, $G[\widetilde{V}_{i}, \widetilde{V}_{3-i}]$ contains a $(u_2',u_3)$-path $P^1$ of length $2j-3$.
Notice that $(\{u_0,u_2\}\cup R)\cap (\widetilde{V}_{i}\cup\widetilde{V}_{3-i})=\varnothing$ and $u_0u_3,u_2u_2'\in E(G)$.
Then, we can see that $G-R$ contains a $(u_0,u_2)$-path of length $2j-1$.
\end{proof}

\begin{claim}\label{CLA2.6}
For arbitrary $u_0\in V(G)$ with $d_{G}(u_0)\geq 24\varepsilon^{\frac12} n$,
we have $N_{\overline{V}_j}(u_0)=\varnothing$ for some $j\in \{1,2\}$.
Furthermore, $W\subseteq S$.
\end{claim}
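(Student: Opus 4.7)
The plan is to argue by contradiction. For the first statement, suppose $N_{\overline{V}_1}(u_0)\neq \varnothing$ and $N_{\overline{V}_2}(u_0)\neq \varnothing$; the goal is to construct a copy of $F=\theta(1,q,r)$ in $G$, contradicting $F$-freeness. Since $d_G(u_0)\geq 24\varepsilon^{1/2}n$, we have $\max\{d_{V_1}(u_0),d_{V_2}(u_0)\}\geq 12\varepsilon^{1/2}n$, so by symmetry we may take $d_{V_2}(u_0)\geq 12\varepsilon^{1/2}n$ and invoke Claim~\ref{CLA2.5} with $i=1$. Fix $v_1\in N_{\overline{V}_1}(u_0)$ and $v_2\in N_{\overline{V}_2}(u_0)$, and note that $k=q+r$.

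When $r$ is even the construction is immediate: take $u_0,v_1$ as the apex vertices joined by the edge $u_0v_1$. First apply Claim~\ref{CLA2.5}(i) with $j=q/2$ and $R=\varnothing$ to obtain a $(u_0,v_1)$-path $P_q$ of length $q$, then apply it again with $j=r/2$ and $R$ equal to the internal vertices of $P_q$ to obtain a $(u_0,v_1)$-path $P_r$ of length $r$ internally disjoint from $P_q$. Since both paths have length at least $2$, neither uses the edge $u_0v_1$, and $\{u_0v_1\}\cup P_q\cup P_r$ is the desired $\theta(1,q,r)$.

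The main obstacle is the case of odd $r$: Claim~\ref{CLA2.5} respects bipartite parity, with $(2j)$-paths ending in $\overline{V}_i$ and $(2j-1)$-paths ending in $N_{\overline{V}_{3-i}}(u_0)$, so one cannot obtain both even- and odd-length $(u_0,y)$-paths for the same $y$. The remedy is to involve both $v_1$ and $v_2$ in the length-$q$ path. Since $v_1\in \overline{V}_1$ forces $d_{V_2}(v_1)\geq(\tfrac12-8\varepsilon^{1/2})n\geq 12\varepsilon^{1/2}n$, Claim~\ref{CLA2.4}(i) supplies at least $\tfrac12\varepsilon^{1/2}n$ common neighbors of $u_0$ and $v_1$ in $\overline{V}_2$; replace $v_2$ by one of them, so that $u_0v_2,v_1v_2\in E(G)$. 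Take $u_0,v_2$ as the apex vertices joined by $u_0v_2$; use Claim~\ref{CLA2.5}(ii) with $j=(r+1)/2$ and $R=\{v_1\}$ to get a length-$r$ $(u_0,v_2)$-path $P_r$, then use Claim~\ref{CLA2.5}(ii) again with new base $v_1$, $j=q/2$, and $R=V(P_r)\setminus\{v_2\}$ (of size at most $r\leq k$) to get a length-$(q-1)$ $(v_1,v_2)$-path $P_{q-1}$ in $G-R$. Concatenating $u_0v_1$ with $P_{q-1}$ yields a length-$q$ $(u_0,v_2)$-path $P_q$; since $P_r$ avoids $v_1$ and the internal vertices of $P_{q-1}$ avoid $V(P_r)\setminus\{v_2\}$, the paths $P_q,P_r$ together with the edge $u_0v_2$ are pairwise internally and edge-disjoint, yielding $\theta(1,q,r)$.

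For the second statement, suppose $w\in W_i\setminus S$ for some $i\in\{1,2\}$. Then $d_{V_i}(w)\geq 4\varepsilon^{1/2}n$ while $d_G(w)\geq(\tfrac12-4\varepsilon^{1/2})n\geq 24\varepsilon^{1/2}n$, so the first statement furnishes some $j\in\{1,2\}$ with $N_{\overline{V}_j}(w)=\varnothing$, giving $d_{V_j}(w)\leq |W|+|S|\leq \tfrac32\varepsilon^{1/2}n$. The maximality of $e(V_1,V_2)$ from Claim~\ref{CLA2.1} forces $d_{V_i}(w)\leq d_{V_{3-i}}(w)$, so whether $j=i$ or $j=3-i$ we obtain $d_{V_i}(w)\leq \tfrac32\varepsilon^{1/2}n<4\varepsilon^{1/2}n$, contradicting $w\in W_i$.
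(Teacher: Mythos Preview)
Your proof is correct and follows essentially the same approach as the paper's. The only cosmetic differences are that in the odd-$r$ case you choose $u_0$ and $v_2$ as the apex vertices of the theta graph (building the length-$q$ path through $v_1$), whereas the paper chooses $v_1$ and $v_2$ as the apices (building the length-$q$ path through $u_0$); and for $W\subseteq S$ you deduce $d_{V_i}(w)\le\tfrac32\varepsilon^{1/2}n$ from $N_{\overline{V}_j}(w)=\varnothing$, while the paper argues the contrapositive by exhibiting neighbors in both $\overline{V}_1$ and $\overline{V}_2$ directly.
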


\begin{proof}

Since $d_{G}(u_0)\geq 24\varepsilon^{\frac12} n$, there exists an integer $i\in \{1,2\}$ such that $d_{V_{3-i}}(u_0)\geq 12\varepsilon^{\frac12} n$.
To prove $N_{\overline{V}_j}(u_0)=\varnothing$ for some $j\in \{1,2\}$,
it suffices to prove that $N_{\overline{V}_{i}}(u_0)=\varnothing$.
Otherwise, there exists a vertex $u_1\in N_{\overline{V}_{i}}(u_0)$.
Suppose first that  $r$ is even.
Setting $R=\varnothing$ in Claim \ref{CLA2.5},
$G$ contains a $(u_0,u_1)$-path $P^1$ of length $q$.
Setting $R=V(P^1)\setminus \{u_0,u_1\}$ in Claim \ref{CLA2.5},
$G-R$ contains a $(u_0,u_1)$-path $P^2$ of length $r$.
The subgraph consisting of $\{u_0u_1\}\cup E(P^1)\cup E(P^2)$ is isomorphic to $\theta(1,q,r)$, a contradiction.
Suppose then that $r$ is odd.
By Claim \ref{CLA2.4} ($i$), there exists a vertex $u_2$ in $\overline{V}_{3-i}$ adjacent to $u_0$ and $u_1$.
Setting $R=\{u_1\}$ in Claim \ref{CLA2.5},
$G-R$ contains a $(u_2,u_0)$-path $P^1$ of length $q-1$.
Setting $R=V(P^1)\setminus \{u_2\}$ in Claim \ref{CLA2.5},
$G-R$ contains a $(u_2,u_1)$-path $P^2$ of length $r$.
The subgraph consisting of
$$\{u_2u_1\}\cup (E(P^1)\cup\{u_0u_1\})\cup E(P^2)$$
is isomorphic to $\theta(1,q,r)$, a contradiction.
Thus, $N_{\overline{V}_{i}}(u_0)=\varnothing$.

Now, we prove $W\subseteq S$.
Suppose to the contrary, then we may assume without loss of generality that $W_1\setminus S\neq \varnothing$ and $u\in W_1\setminus S$.
By the definitions of $S$ and $W$, we have
\begin{align*}
d_G(u)>\Big(\frac{1}{2}-4\varepsilon^{\frac12}\Big)n ~~~ \text{and} ~~~ d_{V_1}(u)\geq 4\varepsilon^{\frac12} n.
\end{align*}
Combining this with Claims \ref{CLA2.2} and \ref{CLA2.3}, we obtain
\begin{align*}
|N_{\overline{V}_1}(u)|\geq |N_{V_1}(u)|-|W|-|S|>0.
\end{align*}
On the other hand, since $V(G)=V_1\cup V_2$ is a partition such that $e(V_1,V_2)$ is maximum,
we have $d_{V_1}(u)\leq \frac{1}{2}d_G(u)$.
Combining this with \eqref{EQU-1} gives
\begin{align*}
 d_{V_2}(u)=d_G(u)-d_{V_1}(u)\geq \frac{1}{2}d_G(u)>
 \Big(\frac{1}{4}-2\varepsilon^{\frac12}\Big)n\geq 24\varepsilon^{\frac12}n>|S\cup W|.
\end{align*}
However, this indicates that $N_{\overline{V}_j}(u_0)\neq \varnothing$ for each $j\in \{1,2\}$, a contradiction.
Thus, $W\subseteq S$.
\end{proof}

\begin{claim}\label{CLA2.7}
For any $u_0\in V(G)$,
let $G^{\star}$ be the graph obtained from $G$ by deleting all edges incident to $u_0$,
and joining all possible edges between $u_0$ and $\overline{V}_2\setminus \{u_0\}$.
Then $G^{\star}$ is $\theta(1,q,r)$-free.
\end{claim}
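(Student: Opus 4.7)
The plan is a one-vertex replacement argument. Suppose for contradiction that $G^{\star}$ contains a copy $T$ of $\theta(1,q,r)$, with common endpoints joined by three internally disjoint paths of lengths $1$, $q$, and $r$. Since the only edges altered in passing from $G$ to $G^{\star}$ are incident to $u_0$, we have $G^{\star}-u_0 = G-u_0$. In particular, if $u_0 \notin V(T)$, then $T \subseteq G-u_0 \subseteq G$, contradicting the assumption that $G$ is $\theta(1,q,r)$-free. Hence we may assume $u_0 \in V(T)$.

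By the construction of $G^{\star}$, we have $N_{G^{\star}}(u_0) = \overline{V}_2 \setminus \{u_0\}$, so the neighbors of $u_0$ in $T$ form a set $U \subseteq \overline{V}_2$. Because every vertex in $\theta(1,q,r)$ has degree $2$ or $3$ (with $3$ attained only at the two common endpoints), we get $|U| \in \{2,3\}$. Apply Claim \ref{CLA2.4} (ii) with $i=2$ and $j=|U|$ to the set $U$: this yields at least $\varepsilon^{\frac12}n$ vertices of $\overline{V}_1$ adjacent in $G$ to every member of $U$. Since $|V(T)| = q+r$ is a constant while $\varepsilon^{\frac12}n$ grows with $n$, we can pick such a vertex $a' \in \overline{V}_1 \setminus V(T)$.

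Finally, let $T'$ be obtained from $T$ by renaming $u_0$ to $a'$. The edges of $T'$ are the edges of $T$ not incident to $u_0$ (these already lie in $G$, since they belong to $G^{\star}$ and avoid $u_0$) together with the edges $\{a'u : u \in U\}$ (which lie in $G$ by the choice of $a'$). Because $a' \notin V(T)$, the identification $u_0 \mapsto a'$ is a graph isomorphism $T \to T'$, so $T'$ is a copy of $\theta(1,q,r)$ contained in $G$, a contradiction.

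The only conceptual step is the observation that every neighbor of $u_0$ in $G^{\star}$ is automatically in $\overline{V}_2$, which simultaneously places $U$ in $\overline{V}_i$ so that Claim \ref{CLA2.4} (ii) applies, and ensures that the replacement vertex $a'$ is adjacent in $G$ to every vertex of $U$. There is no real obstacle beyond this remark and the routine bound $|V(T)|=q+r < \varepsilon^{\frac12}n$ used to place $a'$ outside $V(T)$.
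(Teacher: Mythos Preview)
Your proof is correct and follows essentially the same approach as the paper: both argue that any copy of $\theta(1,q,r)$ in $G^\star$ must use $u_0$, observe that the neighbors of $u_0$ in that copy lie in $\overline{V}_2$, and then invoke Claim~\ref{CLA2.4}(ii) to replace $u_0$ by a vertex of $\overline{V}_1\setminus V(T)$ adjacent to all of them, yielding a forbidden copy in $G$. Your write-up is slightly more explicit about the isomorphism step, but the argument is identical in substance.
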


\begin{proof}
Suppose to the contrary, then $G^{\star}$ contains a subgraph $H$ isomorphic to $\theta(1,q,r)$.
From the construction of $G^{\star}$,
we can see that $u_0\in V(H)$.
Assume that $N_{H}(u_0)=\{u_1,u_2,\dots,u_a\}$.
Then $a\leq 3$, and $u_1,u_2,\dots,u_a\in \overline{V}_2$ by the definition of $G^{\star}$.
By Claim \ref{CLA2.4}, we can select a vertex $u\in \overline{V}_1\setminus V(H)$ adjacent to $u_1,u_2,\dots,u_a$.
This implies that $G[(V(H)\setminus \{u_0\})\cup\{u\}]$ already contains a copy of $\theta(1,q,r)$,
a contradiction.
The result follows.
\end{proof}

Now, we are ready to complete the proof of Theorem \ref{THM1.2}.

\begin{proof}
It is not hard to check that both $K_{\lceil\frac{n-2}{2}\rceil,\lfloor\frac{n-2}{2}\rfloor}$ and $K_3$ are $\theta(1,q,r)$-free.
Moreover, $\theta(1,q,r)$ is 2-connected.
This infers that $K_{\lceil\frac{n-2}{2}\rceil,\lfloor\frac{n-2}{2}\rfloor}\circ K_3$ is $\theta(1,q,r)$-free.
Choose an arbitrary graph $G\in{\rm EX}_{3}(n,\theta(1,q,r))$.
Then, $e(G)\geq e(K_{\lceil\frac{n-2}{2}\rceil,\lfloor\frac{n-2}{2}\rfloor}\circ K_3)=\lfloor\frac{(n-2)^2}{4}\rfloor+3\geq (\frac{1}{2}-\delta)\frac{n^2}{2}$ for some constant $\delta>0$.
Then by Lemma \ref{LEM2.3}, $G$ can be obtained from $T_{n,2}$ by adding and deleting at most $\varepsilon n^2$ edges.
Combining these with Definition \ref{DEF2.1},
we can see that $G$ is $\theta(1,q,r)$-good,
and Claims \ref{CLA2.1}-\ref{CLA2.7} hold for $G$.

A shortest odd cycle of $G$ is denoted by $C=w_1w_2\dots w_{g}w_1$.
Suppose that $g\geq 7$.
By Lemma \ref{LEM2.2}, we get $e(G)\leq \lfloor\frac{(n-3)^2}{4}\rfloor+3$,
which contradicts $e(G)\geq \lfloor\frac{(n-2)^2}{4}\rfloor+3$.
Thus, $g\in \{3,5\}$.

\begin{claim}\label{CLA2.8}
$S\subseteq V(C)$.
\end{claim}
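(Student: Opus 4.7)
The plan is to prove this by contradiction via the symmetrization operation from Claim \ref{CLA2.7}. Suppose there exists a vertex $v\in S\setminus V(C)$. I would apply Claim \ref{CLA2.7} with $u_0=v$ to obtain a $\theta(1,q,r)$-free graph $G^{\star}$. Since $v\notin V(C)$, none of the edges of $C$ are incident to $v$, so $C$ remains intact in $G^{\star}$; in particular $G^{\star}$ is still non-bipartite. The strategy is then to show $e(G^{\star})>e(G)$, contradicting $G\in{\rm EX}_{3}(n,\theta(1,q,r))$.

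For the edge count, note that
\[
e(G^{\star})-e(G)=|\overline{V}_2\setminus\{v\}|-d_G(v).
\]
By the definition of $S$ we have $d_G(v)\le(\tfrac12-4\varepsilon^{1/2})n$. For a lower bound on $|\overline{V}_2\setminus\{v\}|$, first recall $v\in S$ implies $v\notin \overline{V}_2=V_2\setminus(W\cup S)$, so $|\overline{V}_2\setminus\{v\}|=|\overline{V}_2|$. By Claim \ref{CLA2.6} we have $W\subseteq S$, hence $W\cup S=S$, and combining Claims \ref{CLA2.1} and \ref{CLA2.2} gives
\[
|\overline{V}_2|\ge |V_2|-|S|\ge \Big(\tfrac12-2\varepsilon^{1/2}\Big)n-\varepsilon^{1/2}n=\Big(\tfrac12-3\varepsilon^{1/2}\Big)n.
\]
Thus $e(G^{\star})-e(G)\ge (\tfrac12-3\varepsilon^{1/2})n-(\tfrac12-4\varepsilon^{1/2})n=\varepsilon^{1/2}n>0$ for sufficiently large $n$, which is the desired contradiction.

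This argument is essentially routine once the preparatory Claims \ref{CLA2.1}--\ref{CLA2.7} are available; the only delicate points are verifying that the cycle $C$ survives the symmetrization (immediate from $v\notin V(C)$) and that the new edge count beats the old one, which follows from the quantitative gap between the $S$-threshold $(\tfrac12-4\varepsilon^{1/2})n$ and the effective size of $\overline{V}_2$. The main conceptual obstacle is recognizing that Claim \ref{CLA2.7} is the right tool here and that the containment $W\subseteq S$ from Claim \ref{CLA2.6} is what lets us absorb the bad vertices of $V_2$ into a single set $S$ of size $O(\varepsilon^{1/2}n)$, rather than $O(\varepsilon^{1/2}n)+O(\varepsilon^{1/2}n)$, which would be too large to close the gap.
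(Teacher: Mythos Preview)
Your proof is correct and follows essentially the same approach as the paper: assume $v\in S\setminus V(C)$, apply the symmetrization of Claim~\ref{CLA2.7}, note that $C$ survives so $G^{\star}$ is non-bipartite, and compare edge counts. The only difference is cosmetic: the paper bounds $|\overline{V}_2\setminus\{u_0\}|\ge |V_2|-|W|-|S|-1\ge(\tfrac12-\tfrac72\varepsilon^{1/2})n-1$ directly from Claims~\ref{CLA2.1}--\ref{CLA2.3} without invoking $W\subseteq S$, which still beats $(\tfrac12-4\varepsilon^{1/2})n$; so your remark that absorbing $W$ into $S$ is needed to ``close the gap'' is not quite right, though it does give a cleaner constant.
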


\begin{proof}
Suppose to the contrary that there exists a vertex $u_0\in S\setminus V(C)$.
Then, $d_G(u_0)\leq \big(\frac{1}{2}-4\varepsilon^{\frac12}\big)n$.
Setting $u=u_0$ in Claim \ref{CLA2.7},
we can see that $G^{\star}$ is $\theta(1,q,r)$-free.
Moreover, it is clear that $C\subseteq G^{\star}$, which implies that $G^{\star}$ is non-bipartite.

By Claim \ref{CLA2.1}, $|V_2|\geq \big(\frac{1}{2}-2\varepsilon^{\frac12}\big)n$.
Combining this with Claims \ref{CLA2.2} and \ref{CLA2.3}, we obtain
   $$|\overline{V}_2\setminus \{u_0\} |\geq |V_2|-|W|-|S|-1\geq \Big(\frac{1}{2}-\frac{7}{2}\varepsilon^{\frac12}\Big)n-1>d_G(u_0).$$
Consequently, $e(G^{\star})-e(G)=|\overline{V}_2\setminus \{u_0\}|-d_G(u_0)>0$, contradicting the choice of $G$.
\end{proof}

By Claims \ref{CLA2.6} and \ref{CLA2.8}, we have $W\subseteq S \subseteq V(C)$.
Set $G'=G-V(C)$, and $V_i'=V_i\setminus V(C)$ for each $i\in\{1,2\}$.
It follows that $\overline{V}_i\setminus V(C)\subseteq V_i'\subseteq \overline{V}_i$.

\begin{claim}\label{CLA2.9}
For every $u_0\in V(G)$ with $d_{G}(u_0)\geq 24\varepsilon^{\frac12}n$,
we have $N_{G'}(u_0)\subseteq V_i'$ for some $i\in \{1,2\}$.
\end{claim}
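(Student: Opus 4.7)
The plan is to deduce Claim \ref{CLA2.9} as an essentially immediate corollary of Claim \ref{CLA2.6}, once the set $\overline{V}_i$ is related to $V_i'$ using the refinement $S\subseteq V(C)$ established in Claim \ref{CLA2.8}. The only substantive step is a bookkeeping observation about the inclusions between the relevant vertex sets; the actual forbidden-subgraph argument that rules out $u_0$ having neighbors in both sides has already been carried out inside Claim \ref{CLA2.6}.

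First I would record the key inclusion. By Claim \ref{CLA2.6} we have $W\subseteq S$, and by Claim \ref{CLA2.8} we have $S\subseteq V(C)$. Combining these gives $W\cup S=S\subseteq V(C)$, so the definition of $\overline{V}_i$ collapses to
\[
\overline{V}_i \;=\; V_i\setminus (W\cup S) \;=\; V_i\setminus S.
\]
Since $S\subseteq V(C)$, for each $i\in\{1,2\}$ we obtain
\[
V_i' \;=\; V_i\setminus V(C) \;\subseteq\; V_i\setminus S \;=\; \overline{V}_i.
\]

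Next I would apply Claim \ref{CLA2.6} directly to $u_0$: the hypothesis $d_G(u_0)\ge 24\varepsilon^{1/2}n$ gives an index $j\in\{1,2\}$ with $N_{\overline{V}_j}(u_0)=\varnothing$. By the containment just derived, $N_{V_j'}(u_0)\subseteq N_{\overline{V}_j}(u_0)=\varnothing$. Because $V(G')=V_1'\cup V_2'$ is a disjoint union, setting $i:=3-j$ yields $N_{G'}(u_0)\subseteq V_i'$, which is the desired conclusion.

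There is effectively no obstacle here: the $\theta(1,q,r)$-free structural argument is done inside Claim \ref{CLA2.6}, and the only thing to verify is that the upgrade $W\subseteq S\subseteq V(C)$ (Claims \ref{CLA2.6} and \ref{CLA2.8}) is strong enough to replace the set $\overline{V}_j$ by the larger relevant set $V_j'$. Once that replacement is justified, Claim \ref{CLA2.9} follows in one line.
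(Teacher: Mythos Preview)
Your proof is correct and follows essentially the same route as the paper: apply Claim~\ref{CLA2.6} to obtain $N_{\overline{V}_j}(u_0)=\varnothing$, use the containment $V_j'\subseteq \overline{V}_j$ (which the paper records just before stating Claim~\ref{CLA2.9}, and which you re-derive via $W\subseteq S\subseteq V(C)$) to get $N_{V_j'}(u_0)=\varnothing$, and conclude from $V(G')=V_1'\cup V_2'$. One tiny slip in your final paragraph: $V_j'$ is a \emph{subset} of $\overline{V}_j$, not a larger set, but your actual argument handles this correctly.
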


\begin{proof}
Since $d_{G}(u_0)\geq 24\varepsilon^{\frac12}n$,
by Claim \ref{CLA2.6},
we have $N_{\overline{V}_{3-i}}(u_0)=\varnothing$ for some $i\in \{1,2\}$.
Since $V_{3-i}'\subseteq \overline{V}_{3-i}$,
it follows that $N_{V_{3-i}'}(u_0)=\varnothing$.
Furthermore, since $V(G')=V_1'\cup V_2'$,
we have $N_{G'}(w_1)\subseteq V_{i}'$, as desired.
\end{proof}

For any $i\in \{1,2\}$ and any $u\in V_i'$,
from \eqref{EQU-3} we know that $d_{V_{3-i}}(u)>\big(\frac{1}{2}-8\varepsilon^{\frac12}\big)n>24\varepsilon^{\frac12}n.$
By Claim \ref{CLA2.9}, we obtain $N_{G'}(u_0)\subseteq V_{3-i}'$,
which implies that $d_{V_i'}(u)=0$.
Thus, $e(V_i')=0$ for every $i\in \{1,2\}$.
Now, we can divide the proof into the following three cases with respect to the values of $q$ and $r$.

\noindent{\textbf{Case 1.} $r$ is even.}

We first prove that $K_{\lceil\frac{n-1}{2}\rceil,\lfloor\frac{n-1}{2}\rfloor}\bullet K_3$ is $\theta(1,q,r)$-free.
Otherwise,  $K_{\lceil\frac{n-1}{2}\rceil,\lfloor\frac{n-1}{2}\rfloor}\bullet K_3$
contains a copy of $\theta(1,q,r)$, say $H$.
Let $u_1$ and $u_2$ be the vertices of degree three in $H$.
Then $u_1u_2\in E(H)$ and $u_1u_2\in E(K_{\lceil\frac{n-1}{2}\rceil,\lfloor\frac{n-1}{2}\rfloor}\bullet K_3)$.
Let $u_3$ be the vertex of degree two in $K_{\lceil\frac{n-1}{2}\rceil,\lfloor\frac{n-1}{2}\rfloor}\bullet K_3$.
Since $d_G(u_i)\geq d_H(u_i)\geq 3$ for each $i\in \{1,2\}$, we have $u_3\notin \{u_1,u_2\}$.
Since $u_1u_2\in E(H)$, it holds that $u_1u_2\in E(G-\{u_3\})$.
We can further observe that every $(u_1,u_2)$-path of even length in $K_{\lceil\frac{n-1}{2}\rceil,\lfloor\frac{n-1}{2}\rfloor}\bullet K_3$
includes the vertex $u_3$ as an internal vertex.
This contradicts the existence of two internally disjoint $(u_1,u_2)$-paths of lengths $q$ and $r$, respectively.
From the choice of $G$ we know that $e(G)\geq e(K_{\lceil\frac{n-1}{2}\rceil,\lfloor\frac{n-1}{2}\rfloor}\bullet K_3)= \lfloor\frac{(n-1)^2}{4}\rfloor+2$.

Recall that $g\in \{3,5\}$.
If $g=5$, then $G$ is triangle-free.
By Lemma \ref{LEM2.2}, $e(G)\leq \lfloor\frac{(n-1)^2}{4}\rfloor+1$, a contradiction.
Thus, $g=3$.
Recall that $C=w_1w_2w_3w_1$.
For any $v\in V(G)$, let $N_{G'}(v)=N_{G}(v)\cap V(G')$ and $d_{G'}(v)=|N_{G'}(v)|$.
Assume without loss of generality that
$q\leq r$ and
$d_{G'}(w_3)=\min_{w\in V(C)}d_{G'}(w)$.

\begin{claim}\label{CLA2.10}
For each $i\in \{1,2\}$, we have $d_{G'}(w_i)\geq \frac{n}{12}$.
\end{claim}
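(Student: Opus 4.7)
The proof will proceed by contradiction via direct edge counting, combined with the structural bound from Claim~\ref{CLA2.9}. Suppose for contradiction that $d_{G'}(w_i) < n/12$ for some $i \in \{1,2\}$; by symmetry I may take $i = 2$. Since $w_3$ has the minimum $G'$-degree on $V(C)$, I also have $d_{G'}(w_3) \leq d_{G'}(w_2) < n/12$.

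My first step will be to establish the sum bound $d_{G'}(w_1) + d_{G'}(w_2) + d_{G'}(w_3) \geq n - 3$. Decomposing $e(G) = e(V(C)) + e(V(C), V(G')) + e(V(G'))$, we have $e(V(C)) = 3$ (since $C$ is a triangle), and $e(V(G')) = e(V_1', V_2') \leq |V_1'|\cdot|V_2'| \leq \lfloor (n-3)^2/4 \rfloor$, using $e(V_i') = 0$ (established just before Case~1) together with $|V_1'| + |V_2'| = n-3$. Combining with the extremal lower bound $e(G) \geq \lfloor (n-1)^2/4 \rfloor + 2$ (from $K_{\lceil(n-1)/2\rceil,\lfloor(n-1)/2\rfloor}\bullet K_3$ being a valid competitor, already in hand) and the elementary identity $\lfloor(n-1)^2/4\rfloor - \lfloor(n-3)^2/4\rfloor = n-2$, the middle term $\sum_i d_{G'}(w_i)$ is forced to be at least $n - 3$. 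In particular, $d_{G'}(w_1) > n - 3 - 2(n/12) = 5n/6 - 3$.

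My second step will be to invoke Claim~\ref{CLA2.9} on $w_1$: since $d_G(w_1) \geq d_{G'}(w_1) > 5n/6 - 3 > 24\varepsilon^{1/2}n$ for large $n$, the claim yields some $j_0 \in \{1,2\}$ with $N_{G'}(w_1) \subseteq V_{j_0}'$, hence $d_{G'}(w_1) \leq |V_{j_0}'| \leq |V_{j_0}| \leq n/2 + 2\varepsilon^{1/2}n$ via Claim~\ref{CLA2.1}. Comparing the two estimates of $d_{G'}(w_1)$ yields $5n/6 - 3 < n/2 + 2\varepsilon^{1/2}n$, i.e.\ $n(1/3 - 2\varepsilon^{1/2}) < 3$; since $\varepsilon^{1/2} < 1/400$ by \eqref{EQU-1}, this fails for sufficiently large $n$. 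A symmetric argument with $w_1$ and $w_2$ swapped settles the case $i = 1$.

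No step here is genuinely delicate. The main sanity check is that the asymptotic gap between the two competing bounds on $d_{G'}(w_1)$—the lower bound $\approx 5n/6$ forced by the extremality of $e(G)$, and the upper bound $\approx n/2$ forced by the near-bipartite structure captured by Claim~\ref{CLA2.9}—is wide enough to tolerate the threshold $n/12$; any fixed constant strictly below $1/4$ would work, and $1/12$ is simply a comfortable margin presumably chosen for downstream claims.
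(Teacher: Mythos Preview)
Your proof is correct, but it takes a different route from the paper's.

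The paper argues \emph{directly}, not by contradiction, and does not invoke Claim~\ref{CLA2.9}. It sets $G^* = G - \{w_1,w_3\}$, notes that $e(G) = e(G^*) + d_{G'}(w_1) + d_{G'}(w_3) + 3$, and applies Lemma~\ref{LEM2.1} (Simonovits) to the $(n-2)$-vertex $\theta(1,q,r)$-free graph $G^*$ to get $e(G^*)\le \lfloor (n-2)^2/4\rfloor$. Together with $e(G)\ge \lfloor (n-1)^2/4\rfloor+2$ this yields $d_{G'}(w_1)+d_{G'}(w_3)\ge n/6$, and since $d_{G'}(w_3)\le d_{G'}(w_1)$ one obtains $d_{G'}(w_1)\ge n/12$ immediately; the bound for $w_2$ follows symmetrically via $G-\{w_2,w_3\}$.

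Your approach instead removes all three vertices of $C$, bounds $e(G')$ via the already-established bipartite structure $e(V_1')=e(V_2')=0$ (rather than Simonovits), and then feeds the resulting sum bound $\sum_i d_{G'}(w_i)\ge n-3$ into a contradiction with Claim~\ref{CLA2.9} and the size estimate $|V_{j_0}|\le n/2 + 2\varepsilon^{1/2}n$ from Claim~\ref{CLA2.1}. This works, but it is longer and leans on more of the structural machinery; the paper's two-vertex deletion gives the sharper pairwise bound $d_{G'}(w_1)+d_{G'}(w_3)\gtrsim n/2$ in one stroke and needs no contradiction step.
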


\begin{proof}
Assume that $G^*=G-\{w_1,w_3\}$.
Clearly, $d_{G'}(w_i)= d_G(w_i)-2$ for each $i\in \{1,2,3\}$.
Then,
\begin{align*}
e(G)= e(G^*)+d_{G}(w_1)+d_{G}(w_3)-1= e(G^*)+d_{G'}(w_1)+d_{G'}(w_3)+3.
\end{align*}
Note that $|V(G^*)|=n-2$.
By Lemma \ref{LEM2.1}, $e(G^*)\leq \lfloor\frac{(n-2)^2}{4}\rfloor$.
Combining these with $e(G)\geq\lfloor\frac{(n-1)^2}{4}\rfloor+2$, we obtain
\begin{align*}
2d_{G'}(w_1)\geq d_{G'}(w_1)+d_{G'}(w_3)= e(G)-e(G^*)-3\geq \frac{n}{6},
\end{align*}
which yields that $d_{G'}(w_1)\geq \frac{n}{12}$.
Similarly, $d_{G'}(w_2)\geq \frac{n}{12}$.
\end{proof}

By Claim \ref{CLA2.10}, $d_{G}(w_i)\geq d_{G'}(w_i)\geq 24\varepsilon^{\frac12}n$ for each $i\in \{1,2\}$.
By Claim \ref{CLA2.9},
we may assume without loss of generality that $N_{G'}(w_1)\subseteq V_{1}'$ and $N_{G'}(w_2)\subseteq V_{k_0}'$.

\begin{claim}\label{CLA2.11}
For any $\{i,j\}\subseteq \{1,2,3\}$, we have $N_{G'}(w_i)\cap N_{G'}(w_j)=\varnothing$.
\end{claim}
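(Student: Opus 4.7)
The plan is to argue by contradiction: suppose $v\in N_{G'}(w_i)\cap N_{G'}(w_j)$ for some distinct $i,j\in\{1,2,3\}$. I will exhibit a copy of $\theta(1,q,r)$ in $G$, using the triangle edge $w_iw_j$ as the length-$1$ path of the theta graph and building two internally disjoint $(w_i,w_j)$-paths of even lengths $q$ and $r$, which contradicts the $\theta(1,q,r)$-freeness of $G$.

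I focus first on the principal case $\{i,j\}=\{1,2\}$. Since $N_{G'}(w_1)\subseteq V_1'$ and $N_{G'}(w_2)\subseteq V_{k_0}'$, the intersection is nonempty only if $k_0=1$, so $v\in V_1'\subseteq \overline{V}_1$ with $vw_1,vw_2\in E(G)$. By Claim~\ref{CLA2.10}, $d_{V_1}(w_1)\geq n/12\geq 12\varepsilon^{1/2}n$, so the hypotheses of Claim~\ref{CLA2.5}(ii) are satisfied with $u_0=w_1$, $i=2$, and $u_2=v\in N_{\overline{V}_1}(w_1)$. Taking $j=q/2$ and $R=\{w_3\}$, I obtain a $(w_1,v)$-path $P^1$ of length $q-1$ in $G-\{w_3\}$; appending the edge $vw_2$ yields the $(w_1,w_2)$-path $P_q:=P^1+vw_2$ of length $q$ whose internal vertices avoid $w_3$.

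To build $P_r$, I split into two subcases. If $r=2$, I set $P_r:=w_1w_3w_2$, which is internally disjoint from $P_q$ by the choice of $R$ above. If $r\geq 4$, I use Claim~\ref{CLA2.10} together with $|V(P_q)|\leq q+1$ to select $v_1\in N_{G'}(w_1)\setminus(V(P_q)\cup\{v\})$ and $v_2\in N_{G'}(w_2)\setminus(V(P_q)\cup\{v,v_1\})$; both vertices lie in $V_1'\subseteq\overline{V}_1$ with $d_{V_2}$-degree at least $12\varepsilon^{1/2}n$ by \eqref{EQU-3}. Applying Claim~\ref{CLA2.5}(i) with $u_0=v_1$, $u_1=v_2$, $i=1$, $j=(r-2)/2$, and $R=(V(P_q)\setminus\{v_1,v_2\})\cup\{w_3\}$ of size at most $q+1\leq k$, I obtain a $(v_1,v_2)$-path $P^2$ of length $r-2$ in $G-R$; then $P_r:=w_1v_1P^2v_2w_2$ is a $(w_1,w_2)$-path of length $r$, internally disjoint from $P_q$ and avoiding $w_3$. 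Together with the edge $w_1w_2$, the paths $P_q$ and $P_r$ form a copy of $\theta(1,q,r)$ in $G$, a contradiction.

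The cases $\{i,j\}=\{1,3\}$ and $\{i,j\}=\{2,3\}$ are handled analogously using the triangle edge $w_iw_j$ and the remaining triangle vertex $w_\ell$ as an auxiliary common neighbor (contributing the natural length-$2$ path $w_iw_\ell w_j$). When $d_{G'}(w_3)$ is small so that Claim~\ref{CLA2.9} is not available for $w_3$, I still exploit the fact that $w_\ell\in\{w_1,w_2\}$ has large $G'$-degree by Claim~\ref{CLA2.10}, so that Claim~\ref{CLA2.5} can be invoked with $w_\ell$ playing the central role. The main obstacle I anticipate is the careful bookkeeping of exclusion sets in the two applications of Claim~\ref{CLA2.5} to guarantee that $P_q$ and $P_r$ are internally disjoint from each other and from the distinguished triangle edge, while the size condition $|R|\leq k=q+r$ is preserved; the small-length cases $q=2$ or $r=2$ are sidestepped by direct length-$2$ constructions through $v$ or $w_3$, so the technical work concentrates on $q,r\geq 4$.
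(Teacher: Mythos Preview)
Your treatment of the pair $\{1,2\}$ is essentially the paper's argument (modulo the small fix of adding $w_2$ to the exclusion set $R$ when you build $P^1$, so that $P_q=P^1+vw_2$ is guaranteed to be a genuine path). The real problem is your dismissal of $\{1,3\}$ and $\{2,3\}$ as ``analogous''.

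Here is the concrete obstruction. Suppose $q,r\geq 4$, $k_0=1$ (so $N_{G'}(w_1),N_{G'}(w_2)\subseteq V_1'$), and the only vertex of $N_{G'}(w_3)$ is the common neighbour $v$. You insist on using $w_1w_3$ as the length-$1$ edge and building two internally disjoint $(w_1,w_3)$-paths $P_q,P_r$. Each such path must enter $w_3$ through a neighbour in $\{w_2,v\}$, and since $P_q,P_r$ are internally disjoint, one of them---say $P_q$---must use $w_2$ as its penultimate vertex. Then $P_q=w_1\cdots w_2w_3$ and you need a $(w_1,w_2)$-path of odd length $q-1\geq 3$ in $G-\{w_3\}$. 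But with $k_0=1$ every $(w_1,w_2)$-walk through $G'$ has the form $w_1\,(V_1')\,(V_2')\cdots(V_1')\,w_2$ and hence even length. So no such $P_q$ exists, and your scheme breaks down. Invoking Claim~\ref{CLA2.5} ``with $w_\ell=w_2$ playing the central role'' does not help, because the parity constraint is structural, not a matter of bookkeeping.

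The paper's proof of Claim~\ref{CLA2.11} confronts exactly this: for the pair $\{1,3\}$ it splits on $k_0$, and in the subcase $k_0=1$, $q\geq 4$ it abandons the triangle edge altogether and takes $w_1u_2$ (with $u_2\in N_{V_1'}(w_1)$) as the length-$1$ edge, routing the $q$-path through $w_2$ and the $r$-path through $w_3$ and the single common vertex $u_1=v$. You need to supply a comparable construction; the cases involving $w_3$ are not symmetric to $\{1,2\}$ precisely because $d_{G'}(w_3)$ is not controlled by Claim~\ref{CLA2.10}.
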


\begin{proof}
By way of contradiction.
There exists a vertex $u_1\in N_{G'}(w_{i_0})\cap N_{G'}(w_{j_0})$ for some $\{i_0,j_0\}\subseteq \{1,2,3\}$.
If $q=r=2$,
then the subgraph induced by $V(C)\cup \{u_1\}$ contains a copy of $\theta(1,2,2)$, a contradiction.
Then it remains the case  $r\geq 4$.
Take $u_2,u_3\in N_{V_{1}'}(w_1)\setminus \{u_1\}$.

We first consider the case that $i_0=1$ and $j_0=2$.
Clearly, $u_1\in N_{V_{1}'}(w_1)$ as $N_{G'}(w_1)\subseteq V_{1}'$,
which implies that $u_1\in N_{V_{1}'}(w_2)$.
This, together with $N_{G'}(w_2)\subseteq V_{k_0}'$,
gives that $N_{G'}(w_2)\subseteq V_{1}'$.
Choose a vertex $u_4\in N_{V_{1}'}(w_2)\setminus \{u_1,u_2,u_3\}$.
Setting $R=V(C)\cup \{u_1,u_2\}$ in Claim \ref{CLA2.5}, $G-R$ contains a $(u_3,u_4)$-path $P^1$ of length $r-2$.
Setting $R=\{w_2,w_3\}\cup V(P^1)$ in Claim \ref{CLA2.5}, $G-R$ contains a $(w_1,u_1)$-path $P^2$ of length $q-1$.
Furthermore,
the subgraph consisting of
$$\{w_1w_2\}\cup ( E(P^2)\cup\{u_1w_2\}) \cup (\{w_1u_3\}\cup E(P^1)\cup\{u_4w_2\})$$
is isomorphic to $\theta(1,q,r)$, a contradiction.
Hence, $N_{G'}(w_2)\cap N_{G'}(w_1)=\varnothing$.

We then consider the case that $i_0=1$ and $j_0=3$.
Clearly, $u_1\in N_{V_{1}'}(w_1)$  as $N_{G'}(w_1)\subseteq V_{1}'$.
Suppose first that $k_0=2$.
By Claim \ref{CLA2.4} ($i$),
there exists a vertex $v\in \overline{V}_2\setminus V(C)$ adjacent to $w_2$ and $u_2$.
Thus, $v\in \overline{V}_2\setminus V(C)\subseteq V_2'$.
Setting $R=V(C)\cup \{u_1,u_3\}$ in Claim \ref{CLA2.5}, $G-R$ contains a $(v,u_2)$-path $P^1$ of length $r-3$.
Setting $R=\{w_2,w_3\}\cup V(P^1)$ in Claim \ref{CLA2.5}, $G-R$ contains a $(u_1,w_1)$-path $P^2$ of length $q-1$.
Furthermore,
the subgraph consisting of
$$\{w_3w_1\}\cup (\{w_3u_1\}\cup E(P^2)) \cup (\{w_3w_2,w_2v\}\cup E(P^1)\cup\{u_2w_1\})$$
is isomorphic to $\theta(1,q,r)$, a contradiction.

Suppose then that $k_0=1$.
Clearly, there exists a vertex $u_4\in N_{V_{1}'}(w_2)\setminus \{u_1,u_2,u_3\}$.
If $q=2$, then set $R=V(C)$ in Claim \ref{CLA2.5} and we can find that $G-R$ contains a $(u_4,u_1)$-path $P^1$ of length $r-2$.
The subgraph consisting of
$$\{w_1u_1\}\cup \{w_1w_3,w_3u_1\} \cup (\{w_1w_2,w_2u_4\}\cup E(P^1))$$
is isomorphic to $\theta(1,q,r)$, a contradiction.
If $q\geq 4$,
then set $R=V(C)\cup \{u_4\}$ in Claim \ref{CLA2.5} and we can find that $G-R$ contains a $(u_1,u_2)$-path $P^1$ of length $r-2$.
Setting $R=V(C)\cup (V(P^1)\setminus \{u_2\})$ in Claim \ref{CLA2.5},
  $G-R$ contains an $(u_4,u_2)$-path $P^2$ of length $q-2$.
Thus,
the subgraph consisting of
$$\{w_1u_2\} \cup (\{w_1w_2,w_2u_4\}\cup E(P^2))\cup (\{w_1w_3,w_3u_1\}\cup E(P^1))$$
is isomorphic to $\theta(1,q,r)$, a contradiction.
Hence, $N_{G'}(w_3)\cap N_{G'}(w_1)=\varnothing$.

The proof of $N_{G'}(w_3)\cap N_{G'}(w_2)=\varnothing$ is similar to the proof of $N_{G'}(w_3)\cap N_{G'}(w_1)=\varnothing$
 and hence omitted here.
This completes the proof of Claim \ref{CLA2.11}.
\end{proof}

By Claim \ref{CLA2.11}, we have
$$e(V(C),V(G'))=d_{G'}(w_1)+d_{G'}(w_2)+d_{G'}(w_3)\leq |V_{1}'|+|V_{2}'|= n-3.$$
Note that $|V(G')|=n-3$.
By Lemma \ref{LEM2.1}, $e(G')\leq \big\lfloor\frac{(n-3)^2}{4}\big\rfloor$.
Consequently,
\begin{align*}
\Big\lfloor\frac{(n-1)^2}{4}\Big\rfloor+2
&\leq e(G)=e(C)+e(V(C),V(G'))+e(G')\nonumber\\
&\leq 3+(n-3)+\Big\lfloor\frac{(n-3)^2}{4}\Big\rfloor=\Big\lfloor\frac{(n-1)^2}{4}\Big\rfloor+2.
\end{align*}
Thus, $e(G)= \lfloor\frac{(n-1)^2}{4}\rfloor+2$, $G'\cong K_{\lceil\frac{n-3}{2}\rceil,\lfloor\frac{n-3}{2}\rfloor}$, and $G'$ admits a partition $V(G')=\cup_{i=1}^{3}N_{G'}(w_i)$.

By Claim \ref{CLA2.9}, we get  $N_{G'}(w_2)\subseteq V_{1}'$ or $N_{G'}(w_2)\subseteq V_{2}'$.
If $N_{G'}(w_2)\subseteq V_{1}'$,
then by Claim \ref{CLA2.11} and $N_{G'}(w_1)\subseteq V_{1}'$,
we have  $d_{G'}(w_1)+d_{G'}(w_2)\leq |V_1'|$.
This, together with $d_{G'}(w_3)=\min_{w\in V(C)}d_{G'}(w)$ implies that
$$d_{G'}(w_3)\leq  \frac{1}{2}\big(d_{G'}(w_1)+d_{G'}(w_2)\big)
\leq \frac{1}{2}|V_1'|\leq \Big(\frac14 +\varepsilon^{\frac12} \Big)n.$$
On the other hand, since $G'$ admits a partition $V(G')=\cup_{i=1}^{3}N_{G'}(w_i)$,
we have $V_{2}'\subseteq N_{G'}(w_3)$.
By Claim \ref{CLA2.1} and \eqref{EQU-1},
 $$d_{G'}(w_3)\geq |V_2'|\geq \Big(\frac12 -2\varepsilon^{\frac12} \Big)n>\Big(\frac14 +\varepsilon^{\frac12} \Big)n,$$
a contradiction.
Thus, $N_{G'}(w_2)\subseteq V_2'$.
We may assume without loss of generality that $|V_1'|=\lceil\frac{n-3}{2}\rceil$ and $|V_2'|=\lfloor\frac{n-3}{2}\rfloor$.
Since $G'$ admits a partition $V(G')=\cup_{i=1}^{3}N_{G'}(w_i)$,
we get $N_{V_1'}(w_3)=V_1'\setminus N_{V_1'}(w_1)$ and $N_{V_2'}(w_3)=V_2'\setminus N_{V_2'}(w_2)$.

\noindent{\textbf{Subcase 1.1.}} $r=2$.

Then $q=2$ as $q\leq r$.
We first prove that $|N_{V_1'}(w_3)|\cdot|N_{V_2'}(w_3)|\leq 1$.
Otherwise, $|N_{V_1'}(w_3)|\cdot|N_{V_2'}(w_3)|\geq 2$,
then the subgraph $G[N_{G'}(w_3)]$ contains a copy of $P_3$ as $G'\cong K_{\lceil\frac{n-3}{2}\rceil,\lfloor\frac{n-3}{2}\rfloor}$.
Furthermore, $G[\{w_3\}\cup N_{G'}(w_3)]$ contains a copy of $\theta(1,2,2)$, a contradiction.

Conversely, we shall prove that if $|N_{V_1'}(w_3)|\cdot|N_{V_2'}(w_3)|\leq 1$,
then $G$  is $\theta(1,2,2)$-free.
Otherwise, $G$ contains a copy of $\theta(1,2,2)$, say $H$.
Since $G-\{w_3\}$ is bipartite, we get $w_3\in V(H)$.
If $w_3$ is of degree two in $H$, then $G-\{w_3\}$ contains a copy of $H-\{w_3\}\cong C_3$,
which contradicts that $G-\{w_3\}$ is bipartite.
Thus, $w_3$ is of degree three in $H$.
This means that $G[N_G(w_3)]$ contains a copy of $H[N_H(w_3)]\cong P_3$, say $H'$.
Since $N_{G'}(w_i)\cap N_{G'}(w_3)=\varnothing$ for any $i\in \{1,2\}$,
it holds that $G[N_G(w_3)]\cong K_2\cup K_{|N_{V_1'}(w_3)|,|N_{V_2'}(w_3)|}$.
Moreover, since $H'$ is a subgraph of $G[N_G(w_3)]$,
it follows that $|N_{V_1'}(w_3)|\cdot|N_{V_2'}(w_3)|\geq 2$,
a contradiction.

Therefore, by the definition of $\mathcal{G}(n)$, we can see that ${\rm EX}_{3}(n,\theta(1,2,2))=\mathcal{G}(n)$.

\noindent{\textbf{Subcase 1.2.}} $r\geq 4$  is even.

We first prove that $N_{V_1'}(w_3)=\varnothing$.
Suppose to the contrary, then there exists a vertex $u_1\in N_{V_1'}(w_3)$.
Choose vertices $u_2,u_3\in N_{V_1'}(w_1)$.
By Claim \ref{CLA2.4} ($i$), there exists a vertex $v\in \overline{V}_2\setminus V(C)$ adjacent to $w_2$ and $u_3$.
Thus, $v\in \overline{V}_2\setminus V(C)\subseteq V_2'$.
Setting $R=V(C)\cup \{u_3,v\}$ in Claim \ref{CLA2.5}, $G-R$ contains a $(u_1,u_2)$-path $P^1$ of length $r-2$.
If $q=2$, then the subgraph consisting of
$$\{w_3w_1\}\cup (\{w_3w_2,w_2w_1\}) \cup (\{w_3u_1\}\cup E(P^1)\cup\{u_2w_1\})$$
is isomorphic to $\theta(1,q,r)$, a contradiction.
If $q\geq 4$, then set $R=V(C)\cup V(P^1)$ in Claim \ref{CLA2.5} and we can find that $G-R$ contains a $(v,u_3)$-path $P^2$ of length $q-3$.
Furthermore,
the subgraph consisting of
$$\{w_3w_1\}\cup (\{w_3w_2,w_2v\}\cup E(P^2)\cup\{u_3w_1\}) \cup (\{w_3u_1\}\cup E(P^1)\cup\{u_2w_1\})$$
is isomorphic to $\theta(1,q,r)$, a contradiction.
Thus, $N_{V_1'}(w_3)=\varnothing$.
Similarly, $N_{V_2'}(w_3)=\varnothing$.

Since $N_{V_1'}(w_3)=N_{V_2'}(w_3)=\varnothing$,
we have $N_{G'}(w_1)=V_1'$ and $N_{G'}(w_2)=V_2'$.
Therefore, $G\cong K_{\lceil\frac{n-1}{2}\rceil,\lfloor\frac{n-1}{2}\rfloor}\bullet K_3$.

\noindent{\textbf{Case 2.}} $r$ is odd and $q=2$.

Clearly, $SK_{\lceil\frac{n-1}{2}\rceil,\lfloor\frac{n-1}{2}\rfloor}$ is $C_{3}$-free, and hence $\theta(1,q,r)$-free.
Thus, $e(G)\geq e(SK_{\lceil\frac{n-1}{2}\rceil,\lfloor\frac{n-1}{2}\rfloor})= \lfloor\frac{(n-1)^2}{4}\rfloor+1$.
Recall that $g\in \{3,5\}$.

\noindent{\textbf{Subcase 2.1.}} $g=3$.

Then, $C=w_1w_2w_3w_1$.
Assume without loss of generality that $d_{G'}(w_3)=\min_{w\in V(C)}d_{G'}(w)$.

\begin{claim}\label{CLA2.12}
$d_{G'}(w_i)\geq \frac{n}{12}$  for $i\in \{1,2\}$.
\end{claim}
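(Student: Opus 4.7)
The plan is to mimic the proof of Claim \ref{CLA2.10}, since the structural setup here is identical---$C = w_1 w_2 w_3 w_1$ is a triangle and $d_{G'}(w_3) = \min_{w \in V(C)} d_{G'}(w)$---and only the numerical lower bound on $e(G)$ is slightly weaker in this case ($\lfloor (n-1)^2/4 \rfloor + 1$ instead of $\lfloor (n-1)^2/4 \rfloor + 2$). I will set $G^{\ast} = G - \{w_1, w_3\}$, which is a $\theta(1,q,r)$-free graph on $n-2$ vertices, and invoke Lemma \ref{LEM2.1} (noting that $\theta(1,q,r)$ is color-critical with chromatic number three) to conclude $e(G^{\ast}) \leq \lfloor (n-2)^2/4 \rfloor$ for $n$ sufficiently large.

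Next, since $w_1 w_3 \in E(G)$ and each of $w_1, w_3$ has exactly two neighbors inside $V(C)$, I can decompose
\[
e(G) = e(G^{\ast}) + d_G(w_1) + d_G(w_3) - 1 = e(G^{\ast}) + d_{G'}(w_1) + d_{G'}(w_3) + 3.
\]
Combining this identity with the two bounds above and the arithmetic estimate $\lfloor (n-1)^2/4 \rfloor - \lfloor (n-2)^2/4 \rfloor \geq (n-2)/2$ yields $d_{G'}(w_1) + d_{G'}(w_3) \geq n/6$ for $n$ sufficiently large. The minimality assumption $d_{G'}(w_3) \leq d_{G'}(w_1)$ then gives $2 d_{G'}(w_1) \geq n/6$, so $d_{G'}(w_1) \geq n/12$.

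The corresponding estimate $d_{G'}(w_2) \geq n/12$ follows by an identical argument applied to $G - \{w_2, w_3\}$, using $d_{G'}(w_3) \leq d_{G'}(w_2)$ at the final step. Since this is essentially bookkeeping, I do not anticipate any real obstacle; the only point that warrants a moment of care is verifying that the $-1$ discrepancy in the lower bound on $e(G)$ (compared with Case 1) is comfortably absorbed by the linearly growing gap between $\lfloor (n-1)^2/4 \rfloor$ and $\lfloor (n-2)^2/4 \rfloor$, which it is for all $n$ above a modest absolute threshold.
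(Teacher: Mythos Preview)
Your proof is correct and follows essentially the same approach as the paper's own proof: set $G^* = G - \{w_1,w_3\}$, apply Lemma~\ref{LEM2.1} to bound $e(G^*)$, use the edge-count identity $e(G) = e(G^*) + d_{G'}(w_1) + d_{G'}(w_3) + 3$, and combine with the minimality of $d_{G'}(w_3)$. The only cosmetic difference is that the paper records the intermediate bound as $d_{G'}(w_1) + d_{G'}(w_3) \geq n/3$ rather than your $n/6$, but either suffices for the stated conclusion $d_{G'}(w_i) \geq n/12$.
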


\begin{proof}
Assume that $G^*=G-\{w_1,w_3\}$.
Clearly, $d_{G'}(w_i)= d_G(w_i)-2$ for each $i\in \{1,2,3\}$.
Then,
\begin{align*}
e(G)= e(G^*)+d_{G}(w_1)+d_{G}(w_3)-1=  e(G^*)+d_{G'}(w_1)+d_{G'}(w_3)+3.
\end{align*}
Note that $|V(G^*)|=n-2$.
By Lemma \ref{LEM2.1}, $e(G^*)\leq \lfloor\frac{(n-2)^2}{4}\rfloor$.
Combining this with $e(G)\geq\lfloor\frac{(n-1)^2}{4}\rfloor+1$, we obtain
\begin{align*}
2d_{G'}(w_1)\geq d_{G'}(w_1)+d_{G'}(w_3)=e(G)-e(G^*)-3\geq \frac{n}{3},
\end{align*}
which yields that $d_{G'}(w_1)\geq \frac{n}{12}$.
Similarly, $d_{G'}(w_2)\geq \frac{n}{12}$.
\end{proof}

By Claim \ref{CLA2.9}, we may assume without loss of generality that  $N_{G'}(w_1)\subseteq V_{1}'$.

\begin{claim}\label{CLA2.13}
(i) For any integer $i\in \{1,2,3\}$, $N_{G'}(w_i)\subseteq V_1'$.\\
(ii) For any $\{i,j\}\subset \{1,2,3\}$, $N_{V_1'}(w_i)\cap N_{V_1'}(w_j)=\varnothing$.
\end{claim}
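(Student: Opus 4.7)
The plan is to prove both parts by contradiction: in each case, the supposed counterexample furnishes three internally disjoint paths of lengths $1$, $2$, and $r$ between a common pair of vertices, i.e., a copy of $\theta(1,2,r)$ in $G$. The length-$1$ and length-$2$ paths come from the triangle $C$ (together with the hypothesized witness vertex $u$, in part (ii)), while the length-$r$ path is produced by Claim \ref{CLA2.5} anchored at one of the high-degree triangle vertices $w_1$ or $w_2$ guaranteed by Claim \ref{CLA2.12}.

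For part (i), I handle $w_2$ and $w_3$ simultaneously. Suppose, for contradiction, that $N_{V_2'}(w_t) \neq \varnothing$ for some $t \in \{2,3\}$, and fix $u_1 \in N_{V_2'}(w_t)$. Writing $\{s,t\} = \{2,3\}$, I apply Claim \ref{CLA2.5}(i) with $u_0 = w_1$ (whose $V_1$-degree exceeds $n/12$ by Claim \ref{CLA2.12}), $i=2$, target $u_1 \in \overline{V}_2$, $R = \{w_s\}$, and parameter $(r-1)/2$ (an integer since $r$ is odd), obtaining a $(w_1,u_1)$-path $P$ of length $r-1$ in $G-\{w_s\}$. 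Concatenating $P$ with the edge $u_1 w_t$ produces a $(w_1,w_t)$-path of length $r$ whose internal vertices lie in $\overline{V}_1 \cup \overline{V}_2$ and hence avoid $V(C)\subseteq S$. Together with the edge $w_1 w_t$ and the length-$2$ path $w_1 w_s w_t$, this yields a copy of $\theta(1,2,r)$ in $G$, contradicting $\theta(1,2,r)$-freeness. Hence $N_{V_2'}(w_t) = \varnothing$ for $t \in \{2,3\}$, which, combined with $N_{G'}(w_1) \subseteq V_1'$, gives (i). (For $w_2$ one could also invoke Claim \ref{CLA2.9} first, since $d_{G'}(w_2) \geq 24\varepsilon^{\frac12}n$, but this is not needed to derive the contradiction.)

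For part (ii), suppose $u \in N_{V_1'}(w_i) \cap N_{V_1'}(w_j)$ for some $\{i,j\} \subset \{1,2,3\}$. Since $\{i,j\} \cap \{1,2\} \neq \varnothing$, choose $\ell \in \{i,j\}\cap\{1,2\}$ and let $m$ be the remaining element of $\{i,j\}$. By Claim \ref{CLA2.12}, $d_{V_1}(w_\ell) \geq n/12 \geq 12\varepsilon^{\frac12} n$. I aim for a copy of $\theta(1,2,r)$ with endpoints $w_\ell$ and $u$: the edges $w_\ell u$, $w_\ell w_m$, $w_m u \in E(G)$ supply the length-$1$ path $w_\ell u$ and the length-$2$ path $w_\ell w_m u$. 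For the length-$r$ path I apply Claim \ref{CLA2.5}(ii) with $u_0 = w_\ell$, $i=2$, $u_2 = u \in N_{\overline{V}_1}(w_\ell)$, $R = \{w_m\}$, and parameter $(r+1)/2$ (integer since $r$ is odd), producing a $(w_\ell,u)$-path of length $r$ in $G-\{w_m\}$ whose internal vertices again lie in $\overline{V}_1 \cup \overline{V}_2$ and so avoid $V(C)$. The three paths are internally disjoint, giving $\theta(1,2,r) \subseteq G$, a contradiction.

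The principal obstacle is purely arithmetic: for Claim \ref{CLA2.5} to yield paths of the required lengths $r-1$ and $r$, one must choose the parameter $(r\mp 1)/2$, which is integral precisely because $r$ is odd, the defining feature of Subcase 2.1. Verifying internal disjointness is then routine, since the long path supplied by Claim \ref{CLA2.5} has all internal vertices in $\overline{V}_1 \cup \overline{V}_2$, hence disjoint from $V(C) \subseteq S$; therefore the triangle vertex appearing in the length-$2$ path does not recur internally in the long one.
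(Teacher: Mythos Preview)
Your overall strategy matches the paper's, but there is a genuine gap stemming from a reversed inclusion: you repeatedly invoke ``$V(C)\subseteq S$'' to conclude that internal vertices of paths from Claim~\ref{CLA2.5} avoid the triangle, yet Claim~\ref{CLA2.8} gives only $S\subseteq V(C)$, not the reverse. Nothing in the setup forces $w_2,w_3\in S$; for instance $w_2$ could well satisfy $d_G(w_2)>(\tfrac12-4\varepsilon^{1/2})n$, in which case $w_2\in\overline{V}_1\cup\overline{V}_2$.

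This matters in part~(i). With $R=\{w_s\}$, the $(w_1,u_1)$-path $P$ produced by Claim~\ref{CLA2.5}(i) is only guaranteed to avoid $w_s$; if $w_t\notin S$ then $w_t$ may lie on $P$ (indeed $w_t$ is adjacent to both $w_1$ and $u_1$, so it is exactly the kind of vertex the construction in Claim~\ref{CLA2.5} could select), and then your concatenation $P\cdot u_1w_t$ is not a simple path at all. The fix is trivial---take $R=\{w_s,w_t\}$; the paper simply uses $R=V(C)$ throughout---but as written the argument does not go through. In part~(ii) your choice $R=\{w_m\}$ is already sufficient, since the only vertex that must be avoided for internal disjointness is $w_m$ itself; your appeal to $V(C)\subseteq S$ there is wrong but also unnecessary.
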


\begin{proof}
($i$) By way of contradiction.
Then, there exists a vertex $u_2\in N_{V_2'}(w_{i_0})$ for some $i_0\in \{2,3\}$ as $N_{G'}(w_1)\subseteq V_1'$.
By Claim \ref{CLA2.4} ($i$),
there exists a vertex $u_1\in \overline{V}_1\setminus V(C)$ adjacent to $w_1$ and $u_2$. Thus, $u_1\in \overline{V}_1\setminus V(C)\subseteq V_1'$.
Setting $R=V(C)$ in Claim \ref{CLA2.5},
$G-R$ contains a $(u_1,u_2)$-path $P^1$ of length $r-2$.
The subgraph consisting of $$\{w_1w_{i_0}\}\cup \{w_1w_{5-{i_0}},w_{5-{i_0}}w_{i_0}\}\cup
(\{w_1u_1\}\cup E(P^1)\cup\{u_2w_{i_0}\})$$ is isomorphic to $\theta(1,2,r)$, a contradiction.
Hence,  $N_{G'}(w_i)\subseteq V_1'$ for any $i\in \{1,2,3\}$.

($ii$) We first prove that $N_{V_1'}(w_1)\cap N_{V_1'}(w_i)=\varnothing$  for each $i\in \{2,3\}$.
Otherwise, there exists a vertex $u_1\in N_{V_1'}(w_1)\cap N_{V_1'}(w_{i_0})$ for some $i_0\in \{2,3\}$.
By Claim \ref{CLA2.12},
we choose a vertex $u_2\in N_{V_{1}'}(w_1)\setminus \{u_1\}$.
Set $R=V(C)$ in Claim \ref{CLA2.5} and we can find that $G-R$ contains a $(u_2,u_1)$-path $P^1$ of length $r-1$.
Then the subgraph consisting of
$$\{w_1u_1\}\cup \{w_1w_{i_0},w_{i_0}u_1\} \cup (\{w_1u_2\}\cup E(P^1))$$
is isomorphic to $\theta(1,2,r)$, a contradiction.
Hence, $N_{G'}(w_1)\cap N_{G'}(w_i)=\varnothing$ for each $i\in \{1,2\}$.
Similarly, we can also get $N_{G'}(w_2)\cap N_{G'}(w_3)=\varnothing$, as desired.
\end{proof}

By Claims \ref{CLA2.13} and \ref{CLA2.1}, we have
\begin{align}\label{EQU-4}
 e(V(C),V(G'))=\sum\limits_{i=1}^{3}d_{V_1'}(w_i)\leq |V_1'|\leq \Big(\frac12 +2\varepsilon^{\frac12} \Big)n.
\end{align}
Note that $|V(G')|=n-3$.
By Lemma \ref{LEM2.1}, $e(G')\leq \lfloor\frac{(n-3)^2}{4}\rfloor$.
Then,
\begin{align*}
 e(V(C),V(G'))=e(G)-e(C)-e(G')\geq \Big\lfloor\frac{(n-1)^2}{4}\Big\rfloor+1-3-\Big\lfloor\frac{(n-3)^2}{4}\Big\rfloor=n-4,
\end{align*}
which contradicts \eqref{EQU-4}.
Hence, $g\neq 3.$

\noindent{\textbf{Subcase 2.2.}} $g=5$.

Then, $G$ is $C_3$-free.
From Lemma \ref{LEM2.2} we know that $e(G)\leq\lfloor\frac{(n-1)^2}{4}\rfloor+1$.
This, together with $e(G)\geq\lfloor\frac{(n-1)^2}{4}\rfloor+1$,
implies that $e(G)=\lfloor\frac{(n-1)^2}{4}\rfloor+1$.
Again by Lemma  \ref{LEM2.2}, we can observe that $G\in\mathcal{H}(n)$ as $G$ is $C_3$-free.
That is, ${\rm EX}_{3}(n,\theta(1,q,r))\subseteq \mathcal{H}(n)$.
On the other hand, any graph in $\mathcal{H}(n)$ is $C_3$-free,
and hence $\theta(1,q,r)$-free.
Therefore, ${\rm EX}_{3}(n,\theta(1,q,r))=\mathcal{H}(n)$, as desired.

\noindent{\textbf{Case 3.}} $r$ is odd and $q\geq 4$.

Recall that $K_{\lceil\frac{n-2}{2}\rceil,\lfloor\frac{n-2}{2}\rfloor}\circ K_3$ is $\theta(1,q,r)$-free,
and $e(G)\geq\lfloor\frac{(n-2)^2}{4}\rfloor+3$.

\noindent{\textbf{Subcase 3.1.}} $g=5$.

We first give the following two claims.

\begin{claim}\label{CLA2.14}
For any edge $v_1v_2\in E(C)$, we have $d_{G'}(v_i)<\frac{n}{18}$ for some $i\in \{1,2\}$.
\end{claim}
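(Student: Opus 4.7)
The plan is to prove the claim by contradiction: if both $d_{G'}(v_1) \geq n/18$ and $d_{G'}(v_2) \geq n/18$, I will exhibit a copy of $\theta(1, q, r)$ in $G$, contradicting the $\theta(1, q, r)$-freeness of $G$. Since $n/18$ exceeds $24\varepsilon^{\frac12}n$ (assuming $\varepsilon$ sufficiently small, as permitted by \eqref{EQU-1}), Claim \ref{CLA2.9} applies to each $v_i$, yielding $a_i \in \{1,2\}$ with $N_{G'}(v_i) \subseteq V_{a_i}'$.

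Two ingredients power the construction. First, $G'$ is bipartite with parts $V_1', V_2'$, as $e(V_1') = e(V_2') = 0$ was established immediately before Case 1, so a $(u, u')$-path inside $G'$ has even length precisely when $u, u'$ lie on the same side. Second, writing $C = w_1 w_2 w_3 w_4 w_5 w_1$ with $\{v_1,v_2\} = \{w_1,w_2\}$, the arc $P_C = w_1 w_5 w_4 w_3 w_2$ is a $(v_1, v_2)$-path of length $4$ (even) internally disjoint from $V(G')$.

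The goal is to build two internally disjoint $(v_1, v_2)$-paths, $P^1$ of length $q$ (even) and $P^2$ of length $r$ (odd), both avoiding the edge $v_1 v_2$; together with that edge they form the forbidden $\theta(1,q,r)$. Suppose first $a_1 = a_2 = 1$. Since $g = 5$ forbids triangles, $N_{V_1'}(v_1) \cap N_{V_1'}(v_2) = \varnothing$, so I may pick distinct $u_1 \in N_{V_1'}(v_1)$ and $u_2 \in N_{V_1'}(v_2)$. Claim \ref{CLA2.5}(i), applied with $u_0 = u_1$, $i = 1$, $j = (q-2)/2$, $R = V(C)$, supplies a $(u_1, u_2)$-path of length $q - 2$ in $G - V(C)$; concatenating with $v_1 u_1$ and $u_2 v_2$ produces $P^1$. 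For $P^2$, the odd parity cannot arise inside the bipartite $G'$, so I route through a cycle edge: start $v_1 w_5 w_4 w_3$, then reach $v_2$ via an odd-length detour through $G'$ using a $V_1'$-neighbor of $v_2$ and Claim \ref{CLA2.5}, enlarging $R$ to contain $V(C)$ together with the vertices already used in $P^1$ to enforce internal disjointness. The case $a_1 \neq a_2$ is symmetric: Claim \ref{CLA2.5} (possibly chained through an intermediate $V_2'$-neighbor of $u_1$) yields the odd-length $P^2$ directly through $G'$, while the even-length $P^1$ is obtained from $P_C$ by splicing in an even detour through $G'$ to reach length $q$.

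The main obstacle is realizing the ``wrong-parity'' path in each case: when the two $G'$-neighborhoods lie on the same bipartition side, the odd $(v_1, v_2)$-path must splice an odd number of cycle edges with an even-parity $G'$-detour of exactly the right length; when they lie on opposite sides, the roles flip. In either configuration the splice must simultaneously hit the prescribed length and be internally disjoint from the easy path, which is precisely what the flexibility in choosing $R$ in Claim \ref{CLA2.5} is designed to accommodate.
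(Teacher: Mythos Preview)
Your overall strategy (contradiction via constructing $\theta(1,q,r)$, using Claim~\ref{CLA2.9} to pin down the sides $a_1,a_2$) matches the paper, but your decision to force $(v_1,v_2)$ to be the two degree-$3$ vertices of the theta creates a genuine parity obstruction that your sketch does not resolve.

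Consider the case $a_1=a_2=1$. Place $v_1,v_2$ on the $V_2'$-side of the bipartition of $G'$; then every $(v_1,v_2)$-path whose interior lies in $V(G')$ has even length, and the only other available $(v_1,v_2)$-path is $P_C=v_1w_5w_4w_3v_2$, of length~$4$. Your proposed $P^2$ ``starts $v_1w_5w_4w_3$, then reaches $v_2$ via an odd-length detour through $G'$'', but this requires $w_3$ to have a neighbour in $V(G')$, and nothing in the hypotheses guarantees that: we only assumed $d_{G'}(v_1),d_{G'}(v_2)\ge n/18$, while $w_3,w_4,w_5$ may have $d_{G'}(w_j)=0$. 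In that situation every $(v_1,v_2)$-path avoiding the edge $v_1v_2$ has even length, so no $(v_1,v_2)$-path of length $r$ (odd) exists at all. The same obstruction bites in the case $a_1\neq a_2$ when $q\ge 6$: there the only even-length $(v_1,v_2)$-path available is $P_C$ of length $4$, so you cannot realise length $q$.

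The paper sidesteps this by \emph{not} insisting on $(v_1,v_2)$ as the theta hubs. In the same-side case it takes $u_1\in N_{V_1'}(v_1)$ and builds the theta on the pair $(v_1,u_1)$: the three paths are the edge $v_1u_1$, the path $v_1v_2u_3\cdots u_1$ of length $q$ (with $u_3\in N_{V_1'}(v_2)$), and the path $v_1u_2\cdots u_1$ of length $r$ (with $u_2\in N_{V_1'}(v_1)$). Both long paths now require \emph{even}-length $(u_i,u_1)$-segments inside the bipartite $G'$ (lengths $q-2$ and $r-1$), which Claim~\ref{CLA2.5} delivers. For $a_1\neq a_2$ with $q\ge 6$ the paper likewise moves one hub to $u_1\in V_1'$ and routes the length-$q$ path through all of $P_C$ plus a $(u_3,u_1)$-segment of length $q-5$ in $G'$. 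Shifting a hub into $G'$ is the missing idea in your plan.
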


\begin{proof}
Suppose to the contrary that $d_{G'}(v_1)\geq \frac{n}{18}$ and $d_{G'}(v_2)\geq \frac{n}{18}$.
By Claim \ref{CLA2.9}, we may assume without loss of generality that  $N_{G'}(v_1)\subseteq V_{1}'$
and $N_{G'}(v_2)\subseteq V_{i_0}'$.

Suppose first that $i_0=1$, that is, $N_{G'}(v_2)\subseteq V_{1}'$.
Choose $u_1,u_2\in N_{V_{1}'}(v_1)$ and $u_3\in N_{V_{1}'}(v_2)\setminus \{u_1,u_2\}$.
Setting $R=\{v_1,v_2,u_3\}$ in Claim \ref{CLA2.5},
 $G-R$ contains a $(u_2,u_1)$-path $P^1$ of length $r-1$.
Set $R=\{v_1,v_2\}\cup (V(P^1)\setminus \{u_1\})$ in Claim \ref{CLA2.5} and we can find that $G-R$ contains an $(u_3,u_1)$-path $P^2$ of length $q-2$.
Then the subgraph consisting of
$$\{v_1u_1\}\cup (\{v_1v_2,v_2u_3\}\cup E(P^2))\cup (\{v_1u_2\}\cup E(P^1)) $$
is isomorphic to $\theta(1,q,r)$, a contradiction.

Suppose next that $i_0=2$, that is, $N_{G'}(v_2)\subseteq V_{2}'$.
We may assume $C=v_1v_2v_3v_4v_5v_1$.
Take $u_1,u_2\in N_{V_{1}'}(v_1)$.
By Claim \ref{CLA2.4} ($i$),
there exists a vertex $u_3\in \overline{V}_2\setminus V(C)$ adjacent to $v_2$ and $u_1$.
Thus, $u_3\in V_2'$.
If $q=4$, then set $R=V(C)$ in Claim \ref{CLA2.5} and we can find that $G-R$ contains a $(u_1,u_3)$-path $P^1$ of length $r-2$.
Then the subgraph consisting of
$$\{v_1v_2\}\cup \{v_1v_5,v_5v_4,v_4v_3,v_3v_2\}\cup (\{v_1u_1\}\cup E(P^1)\cup\{u_3v_2\}) $$
is isomorphic to $\theta(1,q,r)$, a contradiction.
If $q\geq 6$, then set $R=V(C)\cup \{u_2\}$ in Claim \ref{CLA2.5} and we can find that $G-R$ contains a $(u_3,u_1)$-path $P^1$ of length $q-5$.
Set $R=V(C)\cup (V(P^1)\setminus \{u_1\})$ in Claim \ref{CLA2.5} and we can find that $G-R$ contains an $(u_2,u_1)$-path $P^2$ of length $r-1$.
Then the subgraph consisting of
$$\{v_1u_1\}\cup (\{v_1v_5,v_5v_4,v_4v_3,v_3v_2,v_2u_3\}\cup E(P^1))\cup (\{v_1u_2\}\cup E(P^2)) $$
is isomorphic to $\theta(1,q,r)$, a contradiction.
The claim holds.
\end{proof}

\begin{claim}\label{CLA2.15}
For arbitrary  $v_1,v_2,v_3\in V(C)$ with $d_{G'}(v_1)\geq d_{G'}(v_2)\geq d_{G'}(v_3)$,
$d_{G'}(v_1)\geq \frac{n}{18}$.
\end{claim}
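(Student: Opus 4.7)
The plan is to combine an edge-count lower bound on $e(V(C),V(G'))$ with Claim \ref{CLA2.9} to force a contradiction. First, since $C$ is a shortest odd cycle (of length $5$), $G[V(C)]$ has no chord and therefore contributes exactly $5$ edges; meanwhile $G'$ is an $(n-5)$-vertex $\theta(1,q,r)$-free graph, so Lemma \ref{LEM2.1} gives $e(G')\leq \lfloor (n-5)^2/4\rfloor$. Subtracting these from $e(G)\geq \lfloor (n-2)^2/4\rfloor+3$, I would obtain
$$
\sum_{v\in V(C)}d_{G'}(v)=e(V(C),V(G'))\geq \Big\lfloor\frac{(n-2)^2}{4}\Big\rfloor-\Big\lfloor\frac{(n-5)^2}{4}\Big\rfloor-2\geq \frac{3n}{2}-9.
$$

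Next, I would argue by contradiction: assume $d_{G'}(v_1)<n/18$. Since $d_{G'}(v_1)\geq d_{G'}(v_2)\geq d_{G'}(v_3)$, each of the three vertices $v_1,v_2,v_3$ contributes less than $n/18$ to the sum above, so their combined contribution is at most $n/6$. Writing $a$ and $b$ for the remaining two vertices of $V(C)$, this yields $d_{G'}(a)+d_{G'}(b)>4n/3-9$. Since trivially $d_{G'}(b)\leq n-5$, I get $d_{G'}(a)>n/3-O(1)$, which exceeds $24\varepsilon^{1/2}n$ for large $n$ by \eqref{EQU-1}; hence Claim \ref{CLA2.9} applies to $a$, giving $N_{G'}(a)\subseteq V_i'$ for some $i\in\{1,2\}$, and therefore $d_{G'}(a)\leq |V_i|\leq (1/2+2\varepsilon^{1/2})n$ by Claim \ref{CLA2.1}.

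Substituting this into $d_{G'}(a)+d_{G'}(b)>4n/3-9$ gives $d_{G'}(b)>(5/6-2\varepsilon^{1/2})n-9$, which (again by \eqref{EQU-1}) exceeds $24\varepsilon^{1/2}n$ for large $n$; so Claim \ref{CLA2.9} applies to $b$ as well, yielding $d_{G'}(b)\leq (1/2+2\varepsilon^{1/2})n$ by the same reasoning. Comparing the two estimates for $d_{G'}(b)$ forces $(1/3-4\varepsilon^{1/2})n<9$, which is impossible for large $n$ since $4\varepsilon^{1/2}<1/100$ by \eqref{EQU-1}. The only delicate point is the edge-count book-keeping, namely ensuring the floor-term loss in $\lfloor (n-2)^2/4\rfloor-\lfloor (n-5)^2/4\rfloor$ does not degrade the lower bound below $\frac{3n}{2}-O(1)$; once that is in hand, the rest is a clean two-step use of Claim \ref{CLA2.9} together with the size bound from Claim \ref{CLA2.1}.
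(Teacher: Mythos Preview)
Your argument is correct, but it takes a substantially more elaborate route than the paper's. The paper simply deletes the three vertices $v_1,v_2,v_3$ to obtain $G^*$ on $n-3$ vertices, applies Lemma~\ref{LEM2.1} to get $e(G^*)\le\lfloor (n-3)^2/4\rfloor$, and uses the trivial bound $e(G)\le e(G^*)+\sum_{i=1}^3 d_G(v_i)$ together with $d_{G'}(v_i)=d_G(v_i)-2$ to obtain $3d_{G'}(v_1)\ge\sum_{i=1}^3 d_{G'}(v_i)\ge e(G)-e(G^*)-6\ge n/3$, hence $d_{G'}(v_1)\ge n/9$, which is even stronger than the stated $n/18$. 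This is a four-line direct count. Your approach instead bounds the full crossing count $e(V(C),V(G'))$ via Lemma~\ref{LEM2.1} applied to $G'$ on $n-5$ vertices, and then argues by contradiction, twice invoking Claims~\ref{CLA2.1} and~\ref{CLA2.9} to cap the contributions of the remaining two vertices $a,b$. This works, but it imports structural information (the near-balanced partition and the one-sidedness of large-degree vertices) that is unnecessary for the claim; the paper's deletion trick sidesteps all of this and yields a sharper constant with less machinery.
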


\begin{proof}
Assume that $G^*=G-\{v_1,v_2,v_3\}$.
Clearly, $d_{G'}(v_i)=d_G(v_i)-d_C(v_i)=d_G(v_i)-2$ for each $i\in \{1,2,3\}$.
Then,
\begin{align*}
e(G)\leq  e(G^*)+\sum_{i=1}^3d_{G}(v_i)=  e(G^*)+\sum_{i=1}^3d_{G'}(v_i)+6.
\end{align*}
Note that $|V(G^*)|=n-3$.
By Lemma \ref{LEM2.1}, $e(G^*)\leq \lfloor\frac{(n-3)^2}{4}\rfloor$.
Combining this with $e(G)\geq\lfloor\frac{(n-2)^2}{4}\rfloor+3$, we obtain
\begin{align*}
3d_{G'}(v_1)\geq \sum_{i=1}^3d_{G'}(u_i)\geq e(G)-e(G^*)-6\geq \frac{n}{3},
\end{align*}
which yields that $d_{G'}(v_1)\geq \frac{n}{18}$.
\end{proof}

By Claim \ref{CLA2.15}, it is not hard to observe that there are at least three vertices $v\in V(C)$ satisfying $d_{G'}(v)\geq \frac{n}{18}$.
Among these three vertices, there must be two adjacent vertices in $C$,
contradicting Claim \ref{CLA2.14}.
Therefore, $g\neq 5$.

\noindent{\textbf{Subcase 3.2.}} $g=3$.

Then $C=w_1w_2w_3w_1$.
Assume without loss of generality that $d_{G'}(w_3)=\max_{w\in V(C)}d_{G'}(w)$.
By Lemma \ref{LEM2.1}, $e(G')\leq \lfloor\frac{(n-3)^2}{4}\rfloor$.
Then,
\begin{align*}
 3d_{G'}(w_3)
 &\geq e(V(C),V(G'))=e(G)-e(C)-e(G')\\
 &\geq \Big\lfloor\frac{(n-2)^2}{4}\Big\rfloor+3-3-\Big\lfloor\frac{(n-3)^2}{4}\Big\rfloor
 \geq \frac{n}{3},
\end{align*}
which yields that $d_{G'}(w_3)\geq \frac{n}{9}$.
By Claim \ref{CLA2.9}, we may assume without loss of generality that $N_{G'}(w_3)\subseteq V_{1}'$.

\begin{claim}\label{CLA2.16}
For every $i\in \{1,2\}$, we have $N_{G'}(w_i)=\varnothing$.
\end{claim}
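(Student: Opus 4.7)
Suppose for contradiction that $u_1 \in N_{G'}(w_i)$ for some $i \in \{1,2\}$; by symmetry assume $i=1$. The plan is to exhibit a copy of $\theta(1,q,r)$ in $G$, contradicting $G \in \mathrm{EX}_{3}(n,\theta(1,q,r))$. The key resource is the large one-sided neighborhood of $w_3$: from $d_{G'}(w_3)\ge n/9 > 24\varepsilon^{1/2}n$ and $N_{G'}(w_3)\subseteq V_1'$ we can repeatedly select distinct vertices $u_2, u_3, u_4, \dots \in N_{V_1'}(w_3)\setminus\{u_1\}$, each lying in $\overline{V}_1$ with $d_{V_2}(\cdot) > 12\varepsilon^{1/2}n$. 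Claim \ref{CLA2.5} applied with $u_0 \in \{w_3\}\cup\{u_j\}_{j\ge 2}$ then furnishes paths of any prescribed length in the ``bipartite-like'' $V_1'\cup V_2'$-region avoiding any preassigned forbidden set $R$ of size $\le k$, and Claim \ref{CLA2.4}(i) furnishes abundant common neighbors in $\overline{V}_2$ of any chosen pair.

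With these tools, I would take the base edge of $\theta(1,q,r)$ to be $w_3 u_2$ for a fixed $u_2 \in N_{V_1'}(w_3)\setminus\{u_1\}$ and realise two internally disjoint $(w_3,u_2)$-paths of lengths $q$ (even) and $r$ (odd). The length-$r$ path $P_r$ is built entirely inside the bipartite part as $w_3\,u_3\,v_1\,u_3'\,v_2\,\cdots\,u_2$ of length $r$, via Claim \ref{CLA2.5}(ii) with $u_0 = w_3$ combined with bipartite extensions from Claim \ref{CLA2.5}(i); this is possible because $u_2 \in N_{\overline{V}_1}(w_3)$ and the parity of any $(w_3,u_2)$-path inside $V_1' \cup V_2'$ is odd. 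The length-$q$ path $P_q$ routes through the triangle $C$ and the vertex $u_1$, taking the form $w_3\,w_1\,u_1\,x_1\,y_1\,\cdots\,u_2$ (the edges $w_3w_1,\,w_1u_1$ followed by a $(u_1,u_2)$-bipartite path of even length $q-2$ from Claim \ref{CLA2.5}(i)) when $u_1 \in V_1'$, or the form $w_3\,w_2\,w_1\,u_1\,\cdots\,u_2$ (the three triangle-related edges $w_3w_2,\,w_2w_1,\,w_1u_1$ followed by a $(u_1,u_2)$-bipartite path of odd length $q-3$ supplied by combining Claim \ref{CLA2.4}(i) and Claim \ref{CLA2.5}) when $u_1 \in V_2'$. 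In both cases the bipartite tail has positive length since $q \ge 4$.

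The main obstacle is the parity bookkeeping for $P_q$ under the internal-disjointness constraint: because $e(V_i')=0$ makes $V_1'\cup V_2'$ bipartite, the parity of any internal bipartite path is rigidly pinned by its endpoints, so the number of triangle-plus-pendant edges prepended to the bipartite tail of $P_q$ must be $2$ when $u_1\in V_1'$ and $3$ when $u_1\in V_2'$ in order to match the required total length $q$. Internal disjointness of $P_q$ and $P_r$ is obtained by taking the intermediate vertices of $P_r$ from $(V_1'\cup V_2')\setminus(\{u_1,u_2\}\cup V(P_q))$, which is possible since $|V_1'|,|V_2'|\ge(\tfrac12-2\varepsilon^{1/2})n$ while $|V(P_q)\cup V(P_r)|\le 2(q+r)\le 2k$; the flexibility of the forbidden set $R$ in Claim \ref{CLA2.5} then allows $P_r$ to be placed in $G-R$ for $R = V(P_q)\setminus\{w_3,u_2\}$. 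Once both paths are built, their union with the edge $w_3u_2$ is a copy of $\theta(1,q,r)$ in $G$, contradicting $G\in\mathrm{EX}_{3}(n,\theta(1,q,r))$ and hence forcing $N_{G'}(w_i)=\varnothing$ for each $i\in\{1,2\}$.
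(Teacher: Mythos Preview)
Your approach is essentially the same as the paper's: assume a bad vertex $u_1\in N_{G'}(w_i)$, then assemble a $\theta(1,q,r)$ using the edge $w_3u_2$ (with $u_2\in N_{V_1'}(w_3)$), one path of length $r$ built entirely in the bipartite part via Claim~\ref{CLA2.5}, and one path of length $q$ that detours through the triangle and the bad vertex. The paper does the same case split on whether the bad vertex lies in $V_1'$ or $V_2'$; in the $V_1'$ case your construction and the paper's are identical up to relabelling, while in the $V_2'$ case the paper places the base edge inside the bipartite part (between the bad vertex and a common neighbour of it and $w_3$), rather than at $w_3u_2$ as you do.

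There is one genuine loose end in your sketch. In the case $u_1\in V_2'$ and $q=4$, the bipartite tail of $P_q$ has length $q-3=1$, so it must be the single edge $u_1u_2$. But you have fixed $u_2$ as an arbitrary element of $N_{V_1'}(w_3)\setminus\{u_1\}$, and nothing guarantees $u_1u_2\in E(G)$; Claim~\ref{CLA2.5} only produces odd paths ending at $u_2$ when $u_2$ is already a neighbour of the starting vertex, and ``combining Claim~\ref{CLA2.4}(i) and Claim~\ref{CLA2.5}'' cannot manufacture a length-$1$ path between two non-adjacent vertices. The fix is simply to choose $u_2$ adaptively: when $u_1\in V_2'$, apply Claim~\ref{CLA2.4}(i) with $u_0=w_3$ (noting $d_{V_1}(w_3)\ge n/9\ge 12\varepsilon^{1/2}n$) and the vertex $u_1\in\overline{V}_2$ to obtain $u_2\in\overline{V}_1\setminus V(C)$ adjacent to both $w_3$ and $u_1$. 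With this adjustment your argument goes through for all $q\ge 4$. (The paper sidesteps this by putting the base edge at $u_1u_2$ in the $V_2'$ case, so that the adjacency is built into the choice of pivot.)
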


\begin{proof}
By way of contradiction. We may assume that there exists an integer $i_0\in \{1,2\}$ such that $N_{G'}(w_{i_0})\neq\varnothing$,
more precisely, there exists a vertex $u_2\in N_{G'}(w_{i_0})$.

Suppose first that $u_2\in V_2'$.
By Claim \ref{CLA2.4} ($i$), there exist two vertices $u_1,u_3$ in $\overline{V}_1\setminus V(C)$ adjacent to $w_3$ and $u_2$. Then, $u_1,u_3\in V_1'$.
Setting $R=V(C)\cup \{u_3\}$ in Claim \ref{CLA2.5},
$G-R$ contains a $(u_1,u_2)$-path $P^1$ of length $r$.
If $q=4$, then the subgraph consisting of
$$\{u_1u_2\}\cup \{u_1w_3,w_3w_{3-i_0},w_{3-i_0}w_{i_0},w_{i_0}u_2\}\cup E(P^1)$$
is isomorphic to $\theta(1,q,r)$, a contradiction.
If $q\geq 6$, then set $R=V(C)\cup (V(P^1)\setminus \{u_1\})$ in Claim \ref{CLA2.5} and we can find that
$G-R$ contains a $(u_1,u_3)$-path $P^2$ of length $q-4$.
Consequently, the subgraph consisting of
$$\{u_1u_2\}\cup (E(P^2)\cup\{u_3w_3,w_3w_{3-i_0},w_{3-i_0}w_{i_0},w_{i_0}u_2\})\cup E(P^1)$$
is isomorphic to $\theta(1,q,r)$, a contradiction.

Thus, $u_2\in V_1'$.
Choose  vertices $u_1,u_3\in N_{V_1'}(w_3)\setminus \{u_2\}$.
Setting $R=V(C)\cup \{u_3\}$ in Claim \ref{CLA2.5},
$G-R$ contains a $(u_2,u_1)$-path $P^1$ of length $q-2$.
Setting $R=V(C)\cup (V(P^1)\setminus \{u_1\})$ in Claim \ref{CLA2.5},
$G-R$ contains a $(u_3,u_1)$-path $P^2$ of length $r-1$.
Consequently, the subgraph consisting of
$$\{w_3u_1\}\cup (\{w_3w_{i_0},w_{i_0}u_2\}\cup E(P^1))\cup (\{w_3u_3\}\cup E(P^2))$$
is isomorphic to $\theta(1,q,r)$, which also gives a contradiction.
Thus, the claim holds.
\end{proof}

By Claim \ref{CLA2.16}, we get $d_{G'}(w_i)=0$ for $i\in \{1,2\}$.
Note that $N_{G'}(w_3)\subseteq V_1'$.
Then, $G-\{w_1,w_2\}$ is a subgraph of $K_{|V_1'|,|V_2'|+1}$.
Since $|V_1'|+|V_2'|+1=n-2$, we have
$$e(G)=e(G-\{w_1,w_2\})+3\leq |V_1'|(|V_2'|+1)+3\leq \Big\lfloor\frac{(n-2)^2}{4}\Big\rfloor+3.$$
Combining this with $e(G)\geq \lfloor\frac{(n-2)^2}{4}\rfloor+3$ gives that
$G-\{w_1,w_2\}\cong K_{\lfloor\frac{n-2}{2}\rfloor,\lceil\frac{n-2}{2}\rceil}$.
We can further obtain that  $G$ is isomorphic to either $K_{\lceil\frac{n-2}{2}\rceil, \lfloor\frac{n-2}{2}\rfloor}\circ K_3$ or $K_{\lfloor\frac{n-2}{2}\rfloor,\lceil\frac{n-2}{2}\rceil}\circ K_3$, as desired.

This completes the proof of Theorem \ref{THM1.2}.
\end{proof}

\section{Proof of Theorem \ref{THM1.3}}\label{section4}

In this section, we first list the preparatory lemmas.
%The following result is due to Li and Ning \cite{Li-N2023}.

\begin{lem}\label{lem3.3}\emph{(\cite{Li-N2023})}
Let $G$ be a graph. For any $u\in V(G)$, $\rho^2(G)\leq \rho^2(G-\{u\})+2d_G(u).$
\end{lem}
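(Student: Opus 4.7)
Let $\mathbf{x}$ be the Perron eigenvector of $G$ normalized to $\|\mathbf{x}\|_2=1$, set $H=G-u$, and decompose $\mathbf{x}=(\mathbf{x}_H,x_u)$ along $V(G)=V(H)\cup\{u\}$; let $\mathbf{a}\in\{0,1\}^{V(H)}$ be the indicator of $N_G(u)$, so $\|\mathbf{a}\|_2^2=d_G(u)$. Evaluating the Perron equation $A(G)\mathbf{x}=\rho(G)\mathbf{x}$ at $u$ yields $\rho(G)\,x_u = \mathbf{a}^\top \mathbf{x}_H$, from which Cauchy--Schwarz gives the \emph{a priori} bound $\rho^2(G)\,x_u^2 \leq d_G(u)\|\mathbf{x}_H\|_2^2 \leq d_G(u)$; restricting the same equation to $V(H)$ gives $A(H)\mathbf{x}_H = \rho(G)\mathbf{x}_H - x_u\mathbf{a}$. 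A direct expansion then produces the identity $\|A(H)\mathbf{x}_H\|_2^2 = \rho^2(G)(1-3x_u^2) + d_G(u)\,x_u^2$.

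Combining this identity with the Rayleigh bound $\|A(H)\mathbf{x}_H\|_2^2 \leq \rho^2(H)(1-x_u^2)$ and rearranging yields the intermediate estimate $\rho^2(G) - \rho^2(H) \leq x_u^2\bigl(2\rho^2(G) - d_G(u)\bigr)/(1-x_u^2)$; substituting $\rho^2(G)\,x_u^2 \leq d_G(u)$ weakens this to $\rho^2(G) - \rho^2(H) \leq 2d_G(u)/(1-x_u^2)$, within a factor $(1-x_u^2)^{-1}$ of the target. A direct Weyl attack on the decomposition $A(G)^2 = \widetilde{A(H)^2} + C$, where $\widetilde{A(H)^2}$ is the zero-padded extension of $A(H)^2$ and $C$ is the rank-at-most-$3$ symmetric correction satisfying $\operatorname{tr}(C)=2d_G(u)$, turns out to be too crude, since already on $G=K_n$ one checks that $\lambda_1(C)$ can exceed $2d_G(u)$, so Weyl's bound $\rho^2(G)\leq \rho^2(H)+\lambda_1(C)$ is strictly weaker than the lemma.

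My plan is therefore to pair the Perron computation with a case split on $x_u^2$. When $x_u^2$ is small, one sharpens the Rayleigh step by testing $A(H)^2$ against a carefully chosen linear combination of $\mathbf{x}_H$ and $\mathbf{a}$ inside the two-dimensional subspace they span; I expect this to exchange the $(1-x_u^2)^{-1}$ nuisance for a term that can be absorbed by the $d_G(u)\,x_u^2$ contribution already present in the $\|A(H)\mathbf{x}_H\|_2^2$ identity. When $x_u^2$ is large, the \emph{a priori} bound forces $\rho^2(G)$ close to $d_G(u)$, from which $\rho^2(G) \leq d_G(u) \leq \rho^2(H) + 2d_G(u)$ follows essentially for free. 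The technical heart of the proof is the sharpened Rayleigh step in the small-$x_u$ regime, which must be engineered to distill the exact constant $2$ out of the eigenvector identities and the Cauchy--Schwarz bound $\rho^2(G)\,x_u^2 \leq d_G(u)$.
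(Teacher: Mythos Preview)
The paper does not give its own proof of this lemma; it is quoted from Li and Ning and used as a black box, so there is no ``paper's proof'' to compare against.

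Your setup is correct, and in fact your argument is already complete --- you just substituted the wrong form of your own Cauchy--Schwarz bound and then went off designing an elaborate rescue plan that is not needed. You derived the intermediate estimate
\[
\rho^2(G)-\rho^2(H)\;\le\;\frac{x_u^2}{1-x_u^2}\bigl(2\rho^2(G)-d_G(u)\bigr),
\]
and earlier you recorded $\rho^2(G)\,x_u^2\le d_G(u)\|\mathbf{x}_H\|_2^2=d_G(u)(1-x_u^2)$, which you then weakened to $\rho^2(G)\,x_u^2\le d_G(u)$. Do not weaken it. The sharper form says exactly
\[
\frac{x_u^2}{1-x_u^2}\;\le\;\frac{d_G(u)}{\rho^2(G)}\qquad(\text{assuming }\rho(G)>0).
\]
If $2\rho^2(G)-d_G(u)\ge 0$, plug this into your intermediate estimate to get
\[
\rho^2(G)-\rho^2(H)\;\le\;\frac{d_G(u)}{\rho^2(G)}\bigl(2\rho^2(G)-d_G(u)\bigr)
\;=\;2d_G(u)-\frac{d_G(u)^2}{\rho^2(G)}\;\le\;2d_G(u).
\]
If $2\rho^2(G)-d_G(u)<0$, the right-hand side of your intermediate estimate is nonpositive, so $\rho^2(G)\le\rho^2(H)\le\rho^2(H)+2d_G(u)$. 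The edgeless case $\rho(G)=0$ is trivial. That is the whole proof.

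So the ``genuine gap'' in your write-up is not a wrong approach but an oversight: the factor $(1-x_u^2)^{-1}$ that you thought required a case split and a sharpened two-dimensional Rayleigh argument is cancelled for free by the $(1-x_u^2)$ already sitting inside your Cauchy--Schwarz bound. Your proposed plan (small/large $x_u^2$ regimes, testing $A(H)^2$ on $\mathrm{span}\{\mathbf{x}_H,\mathbf{a}\}$) is unnecessary, and your observation that the Weyl route via $\lambda_1(C)$ is too crude is correct but moot.
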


%The following lemma was given in \cite{Wu2005}.

%\begin{lem}\label{lem3.8}\emph{(\cite{Wu2005})}
%Let $G$ be a connected graph and $\mathbf{x}$ be the Perron vector of $G$.
%Assume that $u,v$ are two vertices of $G$ with $x_u\geq x_v$ and $\{v_i~|~1\leq i \leq s\}\subseteq N_G(v)\setminus (N_G(u)\cup \{u\})$.
%Let $G^*=G-\{vv_i~|~1\leq i \leq s\}+\{uv_i~|~1\leq i \leq s\}$.
%Then, $\rho(G^*)>\rho(G)$.
%\end{lem}

\begin{lem}\label{lem3.2}\emph{(\cite{Guo20212})}
Let $a,b\geq 2$ be integers with $a+b=n-2$.
If $n\geq 10$, then $\rho(K_{a,b}\circ K_3)\leq\rho(K_{\lceil\frac{n-2}{2}\rceil,\lfloor\frac{n-2}{2}\rfloor}\circ K_3)$
with equality if and only if $K_{a,b}\circ K_3\cong K_{\lceil\frac{n-2}{2}\rceil,\lfloor\frac{n-2}{2}\rfloor}\circ K_3$.
\end{lem}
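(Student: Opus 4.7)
The plan is to exploit the natural equitable partition of $K_{a,b}\circ K_3$, reduce $\rho(K_{a,b}\circ K_3)$ to the largest root of an explicit quartic, and then compare these roots as $b$ varies while $a+b=n-2$ stays fixed.

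First I would set up the equitable partition
\[
V(K_{a,b}\circ K_3)=A\cup(B\setminus\{v\})\cup\{v\}\cup\{w_1,w_2\},
\]
where $|A|=a$, $|B|=b$, $v\in B$ is the identified vertex and $w_1,w_2$ are the remaining two vertices of the triangle. The associated quotient matrix is
\[
Q=\begin{pmatrix} 0 & b-1 & 1 & 0 \\ a & 0 & 0 & 0 \\ a & 0 & 0 & 2 \\ 0 & 0 & 1 & 1 \end{pmatrix},
\]
whose Perron root coincides with $\rho(K_{a,b}\circ K_3)$. A cofactor expansion of $\det(\rho I-Q)$ along the last column produces the quartic
\[
\phi(\rho;b)\ :=\ \rho^4-\rho^3-(ab+2)\rho^2+ab\,\rho+2a(b-1),
\]
with $a=n-2-b$; thus $\rho_b:=\rho(K_{n-2-b,b}\circ K_3)$ is the largest real root of $\phi(\cdot;b)$.

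Next I would treat $\phi$ as a function of $b$ and compare consecutive graphs in the family. A short coefficient computation (using $a'b'-ab=n-3-2b$ and $a'(b'-1)-a(b-1)=n-2-2b$ for $(a',b')=(n-3-b,b+1)$, together with $\rho(\rho-1)-2=(\rho-2)(\rho+1)$) yields the clean identity
\[
\phi(\rho;b+1)-\phi(\rho;b)\ =\ -(n-3-2b)(\rho-2)(\rho+1)+2.
\]
Since $K_{a,b}\circ K_3\supseteq K_{a,b}$ with $ab\ge 2(n-4)\ge 12$ for $n\ge 10$, we have $\rho_b\ge\sqrt{ab}>3$, whence $(\rho_b-2)(\rho_b+1)>4$. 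Substituting $\rho=\rho_b$: if $b\le\lfloor(n-4)/2\rfloor$ then $n-3-2b\ge 1$ and the difference is at most $-4+2<0$, forcing $\phi(\rho_b;b+1)<0$ and hence $\rho_{b+1}>\rho_b$; if $b\ge\lfloor(n-2)/2\rfloor$ then $n-3-2b\le 0$ and the difference is at least $2>0$, forcing $\phi(\rho_b;b+1)>0$ and hence $\rho_{b+1}<\rho_b$. Since these two cases partition $\{2,\dots,n-5\}$, chaining the strict inequalities shows that $\rho_b$ strictly increases up to $b=\lfloor(n-2)/2\rfloor$ and strictly decreases afterwards, so $\rho_b$ attains its unique maximum at $b=\lfloor(n-2)/2\rfloor$, corresponding to the graph $K_{\lceil(n-2)/2\rceil,\lfloor(n-2)/2\rfloor}\circ K_3$.

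The only non-routine step is pinning down the clean factored form of the difference identity; once that is in hand, everything reduces to a cofactor expansion together with the subgraph bound $\rho_b>3$, which guarantees that $(\rho-2)(\rho+1)$ dominates the additive constant $2$ and therefore fixes the sign of the difference in each regime. Uniqueness comes for free since each comparison is strict.
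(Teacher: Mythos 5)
This lemma is quoted in the paper from \cite{Guo20212} and is not proved there, so there is no in-paper argument to compare against; I can only assess your proof on its own terms. Your setup is sound: the four-cell partition is indeed equitable, the quotient matrix and the quartic $\phi(\rho;b)=\rho^4-\rho^3-(ab+2)\rho^2+ab\rho+2a(b-1)$ are computed correctly, the difference identity $\phi(\rho;b+1)-\phi(\rho;b)=-(n-3-2b)(\rho-2)(\rho+1)+2$ checks out, and the bound $\rho_b\geq\sqrt{ab}\geq\sqrt{2(n-4)}>3$ for $n\geq 10$ is enough to make the increasing half of the argument work: $\phi(\rho_b;b+1)<0$, together with the fact that $\phi(\cdot\,;b+1)$ is monic with $\rho_{b+1}$ as its largest real root, does force $\rho_{b+1}>\rho_b$.

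The gap is in the decreasing half. From $\phi(\rho_b;b+1)>0$ you conclude $\rho_{b+1}<\rho_b$, but a monic polynomial can be positive at a point lying to the left of its largest root (for instance between two of its roots), so positivity at $\rho_b$ does not by itself place $\rho_b$ above $\rho_{b+1}$. The only valid one-sided inference is the one you used in the first half: negativity at a point pushes the largest root to the right of that point. Fortunately the repair is immediate and uses the same identity: evaluate it at $\rho=\rho_{b+1}$ instead. Since $\phi(\rho_{b+1};b+1)=0$, you get $\phi(\rho_{b+1};b)=(n-3-2b)(\rho_{b+1}-2)(\rho_{b+1}+1)-2\leq -2<0$ whenever $n-3-2b\leq 0$ (here only $\rho_{b+1}>2$ is needed), and hence $\rho_b>\rho_{b+1}$. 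With that substitution the case analysis covers all consecutive pairs, the chaining of strict inequalities goes through, and the unique maximizer at $b=\lfloor\frac{n-2}{2}\rfloor$, i.e. $K_{\lceil\frac{n-2}{2}\rceil,\lfloor\frac{n-2}{2}\rfloor}\circ K_3$, follows as you describe.
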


%Lin, Ning and Wu \cite{Lin-N2021} proved a generalization on spectral
%Mantel theorem for non-bipartite graphs.

\begin{lem}\label{lem3.1}
(i) \emph{(\cite{Lin-N2021})}
$\mathrm{SPEX}_{3}(n,C_3)=\{ SK_{\lceil\frac{n-1}{2}\rceil,\lfloor\frac{n-1}{2}\rfloor}\}$.\\
(ii)  \emph{(\cite{Guo20212})} $\mathrm{SPEX}_{3}(n,C_5)=\{K_{\lceil\frac{n-2}{2}\rceil,\lfloor\frac{n-2}{2}\rfloor}\circ K_3\}$ for $n\geq 21$.
\end{lem}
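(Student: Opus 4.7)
My plan is to establish both parts of Lemma 3.1 by the spectral-stability paradigm, leveraging the auxiliary tools Lemmas 3.2 and 3.3 together with the classical Erd\H{o}s--Simonovits stability result (Lemma 2.3) applied to $K_3$ and $C_5$ respectively. The broad outline follows the same two-stage template used throughout the paper: first deduce that any spectral extremal $G$ is structurally close to $K_{\lceil n/2\rceil,\lfloor n/2\rfloor}$, then pin down the exact minimal non-bipartite deformation compatible with the forbidden subgraph.

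For part (i), let $G$ be a spectral extremal $n$-vertex triangle-free non-bipartite graph. I would first lower-bound $\rho(G)$ by $\rho(SK_{\lceil(n-1)/2\rceil,\lfloor(n-1)/2\rfloor})$, which exceeds $\frac{1}{2}\sqrt{(n-1)^{2}-O(1)}$, and then invoke Lemma 2.3 with $H=K_3$ to extract a partition $V(G)=V_1\cup V_2$ with $e(V_1)+e(V_2)=o(n^{2})$ and $|V_i|=\frac{n}{2}+o(n)$. Because $G$ is $C_3$-free, any internal edge $uv\subseteq V_i$ forces $N(u)\cap N(v)=\varnothing$, so the corresponding Perron entries cannot both be large. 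A local switching argument (move one endpoint of each internal edge to the opposite side and redistribute its neighborhood, using the Perron eigenvector equation $\rho x_v=\sum_{w\sim v}x_w$) shows that any internal edge strictly decreases $\rho$ unless $G$ is obtained from a complete bipartite graph by the single minimal non-bipartite triangle-free deformation: subdividing one edge. A balancing step, comparing $\rho(SK_{a,n-1-a})$ as $a$ varies, then forces $a=\lceil(n-1)/2\rceil$.

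For part (ii), let $G$ be spectral extremal $C_5$-free and non-bipartite. Direct Rayleigh-quotient computation shows
\[
\rho\bigl(K_{\lceil(n-2)/2\rceil,\lfloor(n-2)/2\rfloor}\circ K_3\bigr) > \rho\bigl(SK_{\lceil(n-1)/2\rceil,\lfloor(n-1)/2\rfloor}\bigr)
\]
for $n\geq 21$, so if $G$ were $C_3$-free then part (i) would violate the lower bound $\rho(G)\geq\rho(K_{\lceil(n-2)/2\rceil,\lfloor(n-2)/2\rfloor}\circ K_3)$. Hence $G$ contains a triangle $T=w_1w_2w_3$. I would choose two vertices $w_1,w_2$ of $T$ of smallest Perron weight and apply Lemma 3.3 twice to obtain $\rho^{2}(G)\leq\rho^{2}(G-\{w_1,w_2\})+2(d_G(w_1)+d_G(w_2))$. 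A structural analysis then forces $G-\{w_1,w_2\}$ to be $C_5$-free and in fact bipartite (any odd cycle together with $w_3$ would produce a $C_5$ via the triangle), and together with Lemma 3.2 this pins down $G-\{w_1,w_2\}\cong K_{\lceil(n-2)/2\rceil,\lfloor(n-2)/2\rfloor}$, with $w_1,w_2$ attaching only through $w_3$; hence $G\cong K_{\lceil(n-2)/2\rceil,\lfloor(n-2)/2\rfloor}\circ K_3$.

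The hardest step in both parts is the transition from approximate to exact structure: once stability gives $G\approx K_{\lceil n/2\rceil,\lfloor n/2\rfloor}$ plus $O(1)$ extra structure, one must rule out every competing non-bipartite deformation. Several near-extremal candidates (for part (i), $SK_{a,n-1-a}$ with unbalanced $a$ or $K_{a,b}$ with two subdivisions; for part (ii), $K_{a,b}$ with a triangle attached along an edge rather than a vertex, or with the triangle planted inside one colour class) have spectral radii that differ from the optimum only in lower-order terms. Eliminating these requires sharp Perron entry estimates obtained from the eigenvalue equation on the two colour classes, and in part (ii) a quantitative use of the hypothesis $n\geq 21$ to close out small-$n$ boundary cases.
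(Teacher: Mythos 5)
First, a point of reference: the paper does not prove Lemma \ref{lem3.1} at all --- both parts are quoted as known results from \cite{Lin-N2021} and \cite{Guo20212} and used as black boxes, so there is no in-paper proof to compare your sketch against; I can only judge it on its own terms. Your general template (spectral stability followed by Perron-vector switching to pass from approximate to exact structure) is indeed the paradigm of the cited papers and of this paper's proof of Theorem \ref{THM1.3}, but as written the sketch of part (i) is a plan rather than a proof: the decisive assertion that ``any internal edge strictly decreases $\rho$ unless $G$ is obtained \dots\ by subdividing one edge'' is essentially the statement of the theorem, and you yourself flag the approximate-to-exact transition as the hard step without carrying it out.

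More seriously, part (ii) rests on a step that is false. You claim $\rho\bigl(K_{\lceil(n-2)/2\rceil,\lfloor(n-2)/2\rfloor}\circ K_3\bigr)>\rho\bigl(SK_{\lceil(n-1)/2\rceil,\lfloor(n-1)/2\rfloor}\bigr)$ and use it to force a triangle in $G$. The inequality goes the other way for large $n$: attaching a pendant triangle at one vertex of $K_{\lceil(n-2)/2\rceil,\lfloor(n-2)/2\rfloor}$ perturbs the radius by only $O(1/n)$ (e.g.\ apply Lemma \ref{lem3.3} twice to the two degree-two vertices to get $\rho^2\le\lfloor(n-2)^2/4\rfloor+8$), so the left side is $\tfrac{n-2}{2}+O(1/n)$, whereas $SK_{\lceil(n-1)/2\rceil,\lfloor(n-1)/2\rfloor}$ contains $K_{\lceil(n-1)/2\rceil,\lfloor(n-1)/2\rfloor}$ minus one edge and its radius is $\tfrac{n-1}{2}-O(1/n)$ (the paper's own bound \eqref{EQU-11} already gives $\rho(SK_{\lceil(n-1)/2\rceil,\lfloor(n-1)/2\rfloor})>\tfrac{n-1.1}{2}$); the gap is $\tfrac12-O(1/n)$ in favour of $SK$. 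So your deduction that $G$ contains a triangle collapses. The correct route is to note that a $C_3$- and $C_5$-free non-bipartite graph has odd girth at least $7$, apply Lemma \ref{LEM2.2} to get $e(G)\le\lfloor(n-3)^2/4\rfloor+3$, and combine with Nosal's inequality $\rho\le\sqrt{e}$ for triangle-free graphs to fall below $\tfrac{n-2}{2}$. Two further steps in part (ii) also fail as stated: a long odd cycle through $w_3$ together with the triangle $w_1w_2w_3$ need not contain a $C_5$, so bipartiteness of $G-\{w_1,w_2\}$ does not follow from your parenthetical argument; and Lemma \ref{lem3.2} only ranks the graphs $K_{a,b}\circ K_3$ among themselves, so it cannot ``pin down'' $G-\{w_1,w_2\}\cong K_{\lceil(n-2)/2\rceil,\lfloor(n-2)/2\rfloor}$ --- that exactness again requires the switching arguments you defer.
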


%\begin{lem}\label{lem3.1}\emph{(\cite[Proposition 3.3]{Lin-N2021})}
%Let $a,b,n$ be positive integers with $a+b=n-1$.
%Then $\rho(SK_{a,b})\leq \rho(SK_{\lceil\frac{n-1}{2}\rceil,\lfloor\frac{n-1}{2}\rfloor})$,
%with equality if and only if
%$(a,b)=(\lceil\frac{n-1}{2}\rceil,\lfloor\frac{n-1}{2}\rfloor)$.
%\end{lem}

The following is the spectral version of the Stability Lemma due to Nikiforov \cite{Nikiforov-2009}.

\begin{lem}\label{thm0}\emph{(\cite{Nikiforov-2009})}
Let $r\ge 2$, $\frac{1}{\ln n}<c<r^{-8(r+21)(r+1)}$, $0<\varepsilon<2^{-36}r^{-24}$ and $G$ be an $n$-vertex graph.
If $\rho(G)>(1-\frac1r-\varepsilon)n$, then one of the following holds:\\
(i) $G$ contains a $K_{r+1}(\lfloor c\ln n\rfloor, \dots,\lfloor c\ln n\rfloor,\lceil n^{1-\sqrt{c}}\rceil)$;\\
(ii) $G$ differs from $T_{n,r}$ in fewer than $(\varepsilon^{\frac{1}{4}}+c^{\frac{1}{8r+8}})n^2$ edges.
\end{lem}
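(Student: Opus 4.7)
The plan is to follow Nikiforov's strategy of converting the spectral bound into a supersaturation-type count of $(r+1)$-cliques, and then splitting on whether that count is large enough to embed a complete multipartite blow-up (conclusion (i)) or small enough that $G$ is essentially $K_{r+1}$-free and close to $T_{n,r}$ (conclusion (ii)).

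\emph{Step 1: Spectral hypothesis to clique count.} Let $\mathbf{x}$ be a Perron eigenvector of $G$ normalized by $\|\mathbf{x}\|_2=1$, so that $\rho(G)=\mathbf{x}^{T}A(G)\mathbf{x}=\sum_{uv\in E(G)}2x_{u}x_{v}$. Using a weighted Motzkin--Straus / Moon--Moser inequality (applied iteratively on the neighborhoods of large Perron weight), I would derive a lower bound on the weighted $(r+1)$-clique sum $\sum_{K_{r+1}\subseteq G}\prod_{v\in K_{r+1}}x_{v}$ whenever $\rho(G)/n$ exceeds the Turán density $1-1/r$ by more than $\varepsilon$. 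After controlling $\|\mathbf{x}\|_\infty$ via a Perron-vector localization argument (to rule out eigenvectors concentrated on $o(n)$ coordinates, which would already force a dense induced subgraph), this weighted count translates into an honest lower bound of the form $k_{r+1}(G)\geq\beta_{r}(\varepsilon)\,n^{r+1}$.

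\emph{Step 2: Embedding via Kővári--Sós--Turán.} Regard the $K_{r+1}$-copies of $G$ as an $(r+1)$-uniform hypergraph $\mathcal{H}$ on $V(G)$. Apply the Kővári--Sós--Turán theorem iteratively, peeling off the $r+1$ parts one at a time: if $|\mathcal{H}|$ exceeds the extremal number of $K_{r+1}(t,\dots,t,s)$ in $K_{r+1}$-free $(r+1)$-uniform hypergraphs, then $G$ contains $K_{r+1}(t,\dots,t,s)$. Plugging in $t=\lfloor c\ln n\rfloor$ and $s=\lceil n^{1-\sqrt{c}}\rceil$ and balancing against $\beta_{r}(\varepsilon)\,n^{r+1}$ from Step 1 shows that \emph{if} $k_{r+1}(G)\geq\beta_{r}(\varepsilon)\,n^{r+1}$, then conclusion (i) holds; the constraint $c<r^{-8(r+21)(r+1)}$ is exactly what makes the iterated KST thresholds fit.

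\emph{Step 3: Stability in the remaining case.} If (i) fails, then $k_{r+1}(G)<\beta_{r}(\varepsilon)\,n^{r+1}$, so by a standard clique-deletion argument $G$ can be made $K_{r+1}$-free by removing at most $c^{1/(8r+8)}n^{2}/2$ edges, while $\rho$ drops by at most $o(n)$. Apply Nikiforov's spectral Turán theorem and the classical Erdős--Simonovits stability lemma (Lemma~\ref{LEM2.3}) to the resulting $K_{r+1}$-free graph: its edge set differs from $T_{n,r}$ in at most $\varepsilon^{1/4}n^{2}/2$ edges. Reinstating the deleted edges yields (ii) with error $(\varepsilon^{1/4}+c^{1/(8r+8)})n^{2}$.

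The main obstacle is Step 1: honest $K_{r+1}$-counting from a single spectral inequality is delicate, because a raw Motzkin--Straus bound only delivers a \emph{weighted} clique sum, and a Perron vector with a few atypically large coordinates can make this weighted sum large without producing many cliques. The localization argument needed to rule out such concentration is what forces the quantitative dependence $\varepsilon<2^{-36}r^{-24}$, and tracking this dependence together with the KST loss in Step 2 is the most technical part of the proof.
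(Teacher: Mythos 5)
First, a point of comparison: the paper does not prove this statement at all. Lemma~\ref{thm0} is imported verbatim from Nikiforov \cite{Nikiforov-2009} and used as a black box, so there is no in-paper proof to measure your attempt against; I can only judge it as a reconstruction of Nikiforov's argument. The overall architecture you describe in Steps 2 and 3 --- a dichotomy on the number of copies of $K_{r+1}$, with many copies yielding the blow-up in (i) via the Erd\H{o}s/K\H{o}v\'ari--S\'os--Tur\'an hypergraph embedding and few copies yielding the stability conclusion (ii) --- is the right shape.

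However, Step 1 is genuinely wrong. You claim the hypothesis forces $k_{r+1}(G)\ge\beta_r(\varepsilon)n^{r+1}$ unconditionally, but $T_{n,r}$ refutes this: it satisfies $\rho(T_{n,r})\ge \frac{2e(T_{n,r})}{n}>(1-\frac1r-\varepsilon)n$ for large $n$ yet contains no $K_{r+1}$ at all. You have read the hypothesis as $\rho(G)/n$ exceeding the Tur\'an density $1-\frac1r$ by $\varepsilon$, whereas it actually sits $\varepsilon$ \emph{below} it; that is exactly why alternative (ii) must appear in the statement. Fortunately Step 1 is also unnecessary, since Steps 2 and 3 already form the correct case distinction. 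Two further gaps remain in Step 3. (a) Passing from ``few $(r+1)$-cliques'' to ``delete $O(c^{1/(8r+8)}n^2)$ edges to become $K_{r+1}$-free'' is not a routine clique-deletion argument at this quantitative level: the naive route is the graph removal lemma, whose tower-type bounds are incompatible with the explicit polynomial error term here, and Nikiforov instead works with edges lying in many cliques (his ``joints''). (b) The Erd\H{o}s--Simonovits stability lemma needs a lower bound on $e(G)$, which does not follow from $\rho(G)>(1-\frac1r-\varepsilon)n$ via the crude bound $e(G)\ge\rho^2(G)/2$ (that only gives about $(1-\frac1r)^2\frac{n^2}{2}$); one must apply the clique-sensitive inequality $\rho^2(G)\le 2e(G)\bigl(1-1/\omega(G)\bigr)$ to the (almost) $K_{r+1}$-free graph to recover $e(G)\ge(1-\frac1r-O(\varepsilon))\frac{n^2}{2}$. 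Neither point is addressed in your outline.
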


%By Nikiforov's result \cite[Theorem 2]{Nikiforov-2009-2}   and a more careful calculation on the
%equality case in his proof,
%one can obtain the following spectral version of  the color-critical theorem, which was given by Zhai and Lin \cite[Theorem 1.2]{Zhai2023}.

From Lemma \ref{thm0}, Desai et al. \cite{Desai-2022} derived the following stability result.
Lemma \ref{thm0} and the subsequent lemma provide an effective approach for studying spectral extremal problems.

\begin{lem} \label{lem3.5}\emph{(\cite{Desai-2022})}
Let $F$ be a graph with  $\chi(F)=r+1$.
For every $\varepsilon>0$, there exist $\delta>0$ and $n_0$ such that
if $G$ is an $F$-free graph on $n\ge n_0$ vertices with $\rho(G)\geq (1-\frac1r-\delta)n$,
then $G$ can be obtained from $T_{n,r}$ by adding and deleting at most $\varepsilon n^2$ edges.
\end{lem}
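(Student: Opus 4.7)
The plan is to derive Lemma \ref{lem3.5} directly from Nikiforov's spectral stability theorem (Lemma \ref{thm0}), using the $F$-freeness of $G$ to exclude the first alternative there. Given $\varepsilon>0$, I would first select auxiliary parameters $c>0$ and $\varepsilon'>0$ subject to
\[
\varepsilon'<2^{-36}r^{-24},\quad c<r^{-8(r+21)(r+1)},\quad (\varepsilon')^{1/4}+c^{1/(8r+8)}\le\varepsilon,
\]
and then set $\delta:=\varepsilon'/2$. Since $\tfrac{1}{\ln n}\to 0$, I would pick $n_0$ large enough that $\tfrac{1}{\ln n}<c$ for every $n\ge n_0$, so that Lemma \ref{thm0} is applicable with parameters $c$ and $\varepsilon'$.

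Now suppose $G$ is $F$-free on $n\ge n_0$ vertices with $\rho(G)\ge(1-\tfrac{1}{r}-\delta)n$. Then $\rho(G)>(1-\tfrac{1}{r}-\varepsilon')n$, and Lemma \ref{thm0} yields one of two alternatives: either (i) $G$ contains a copy of $K_{r+1}(\lfloor c\ln n\rfloor,\dots,\lfloor c\ln n\rfloor,\lceil n^{1-\sqrt{c}}\rceil)$, or (ii) $G$ differs from $T_{n,r}$ in fewer than $((\varepsilon')^{1/4}+c^{1/(8r+8)})n^2\le\varepsilon n^2$ edges. Alternative (ii) is exactly the desired conclusion, so it suffices to rule out (i).

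To eliminate (i), I would use the standard fact that, because $\chi(F)=r+1$, the graph $F$ embeds into the balanced complete $(r+1)$-partite graph $K_{r+1}(|V(F)|,\ldots,|V(F)|)$ (group the vertices of $F$ by the color classes of a proper $(r+1)$-coloring). Enlarging $n_0$ if necessary, I may assume $\lfloor c\ln n\rfloor\ge|V(F)|$ for all $n\ge n_0$; since also $\lceil n^{1-\sqrt{c}}\rceil\ge|V(F)|$ for large $n$, the blow-up appearing in (i) contains $F$ as a subgraph. Hence (i) would force $F\subseteq G$, contradicting $F$-freeness, so (ii) must hold.

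The only delicate step is coordinating the choice of $\varepsilon'$ and $c$ so that all three inequalities above are simultaneously satisfied; this is routine because the admissibility constraints depend only on $r$, while $(\varepsilon')^{1/4}+c^{1/(8r+8)}\le\varepsilon$ can always be enforced by shrinking $\varepsilon'$ and $c$ further. There is no serious obstacle, as the hard analytic content is already packaged in Lemma \ref{thm0}; Lemma \ref{lem3.5} is a qualitative reformulation that absorbs Nikiforov's complicated constants into a clean $\varepsilon$--$\delta$ statement tailored for the stability arguments used later in the paper.
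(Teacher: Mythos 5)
Your proposal is correct and follows exactly the route the paper indicates (it states that Lemma \ref{lem3.5} is derived from Lemma \ref{thm0} by Desai et al., and gives no further proof): choose $c$ and $\varepsilon'$ within Nikiforov's admissible ranges with $(\varepsilon')^{1/4}+c^{1/(8r+8)}\le\varepsilon$, take $n_0$ large enough that $1/\ln n<c$ and both part sizes of the blow-up exceed $|V(F)|$, and rule out alternative (i) since $\chi(F)=r+1$ forces $F$ to embed in that complete $(r+1)$-partite blow-up. No gaps.
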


Based on Nikiforov's result \cite[Theorem 2]{Nikiforov-2009-2} and a more detailed analysis of the equality case in his proof, one can derive the following spectral version of the color-critical theorem, as presented by Zhai and Lin \cite[Theorem 1.2]{Zhai2023}.

\begin{lem} \label{lem3.4}\emph{(\cite{Nikiforov-2009-2, Zhai2023})}
Let $r\geq 2$ and $H$ be a color-critical graph with $\chi(H)=r+1$.
Then there exists an $n_0(H)\geq e^{|V(H)|r^{(2r+9)(r+1)}}$ such that
 ${\rm SPEX}(n,H)=\{T_{n,r}\}$ provided $n\geq n_0(H)$.
\end{lem}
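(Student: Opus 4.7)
The plan is to derive this spectral chromatic critical edge theorem by combining Nikiforov's spectral stability result (Lemma~\ref{thm0}) with a structural analysis that exploits the color-critical edge of $H$.

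First I would invoke Lemma~\ref{thm0}. Since $\rho(T_{n,r})\geq (1-\tfrac{1}{r})n-1$, any $G\in\mathrm{SPEX}(n,H)$ satisfies $\rho(G)\geq (1-\tfrac{1}{r}-\varepsilon)n$ for every prescribed small $\varepsilon$ once $n$ is large. If alternative (i) of Lemma~\ref{thm0} held, then $G$ would contain $K_{r+1}(\lfloor c\ln n\rfloor,\dots,\lfloor c\ln n\rfloor,\lceil n^{1-\sqrt c}\rceil)$; choosing $c$ so that $\lfloor c\ln n\rfloor\geq |V(H)|$ gives a blow-up of $K_{r+1}$ that already contains $H$ (because $\chi(H)=r+1$), contradicting $H$-freeness. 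Hence alternative (ii) applies and $G$ differs from $T_{n,r}$ by fewer than $\eta n^2$ edges, where $\eta=\varepsilon^{1/4}+c^{1/(8r+8)}$ can be made arbitrarily small. Tracing the admissible values of $c$ and $\varepsilon$ through Nikiforov's quantitative stability statement is what produces the explicit threshold $n_0(H)\geq e^{|V(H)|r^{(2r+9)(r+1)}}$.

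Second, I would clean the approximate $r$-partition using the Perron eigenvector $\mathbf{x}$ of $G$. A standard Rayleigh-quotient estimate, together with $\rho(G)\geq\rho(T_{n,r})$, shows that all but $o(n)$ vertices have eigenvector coordinate at least $(1-o(1))\|\mathbf{x}\|_\infty$; call these vertices \emph{heavy} and denote their set by $L$. Relabelling the parts from stability gives a partition $V(G)=V_1\cup\cdots\cup V_r$ with $|V_i|=n/r+o(n)$, with $G[L\cap V_i,L\cap V_j]$ complete for every $i\neq j$, and with only $o(n^2)$ ``bad'' edges (those inside some part) plus non-edges across parts.

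The decisive step exploits color-criticality. Suppose for contradiction that $G\neq T_{n,r}$. If some part $V_i$ contains an edge $uv$, take a critical edge $e=xy$ of $H$, so that $H-e$ admits a proper $r$-coloring $\phi$ with $\phi(x)=\phi(y)=i$. Map $x\mapsto u$, $y\mapsto v$, and greedily embed each remaining vertex $z$ of $H-e$ into a yet-unused heavy vertex of $L\cap V_{\phi(z)}$. Since $|L\cap V_j|\geq n/r-o(n)\gg|V(H)|$ and $G[L\cap V_i,L\cap V_j]$ is complete for $i\neq j$, the greedy procedure produces a copy of $H$, a contradiction. Hence each $V_i$ is independent and $G$ is $r$-partite. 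A concluding vertex-shifting argument on $\mathbf{x}$ (moving a vertex or filling in a missing cross-edge, both of which strictly increase $\rho$) then forces $G$ to be complete $r$-partite with part sizes differing by at most one, i.e.\ $G=T_{n,r}$.

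The main obstacle is the quantitative bookkeeping. The greedy embedding must simultaneously avoid the $o(n)$ non-heavy vertices, the $o(n^2)$ missing cross-edges, and the previously used $|V(H)|$ embedded vertices, while always finding a heavy vertex in the correct part adjacent to all earlier ones. Guaranteeing this requires $c$ and $\varepsilon$ to be chosen exponentially small in $|V(H)|$ and $r$, and propagating these choices through Nikiforov's stability bound is exactly what produces the explicit threshold $n_0(H)\geq e^{|V(H)|r^{(2r+9)(r+1)}}$ appearing in the statement.
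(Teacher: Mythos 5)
The paper does not actually prove Lemma~\ref{lem3.4}: it is imported as a black box from Nikiforov \cite{Nikiforov-2009-2} and Zhai--Lin \cite{Zhai2023}, with only the remark that it follows from Nikiforov's Theorem~2 "and a more detailed analysis of the equality case in his proof." So there is no internal proof to compare against; what can be said is that your outline does follow the route of the cited source: discard alternative (i) of Lemma~\ref{thm0} because a $K_{r+1}$-blow-up with all parts of size at least $|V(H)|$ contains every $(r+1)$-chromatic graph on $|V(H)|$ vertices, pass to the approximate $r$-partition of alternative (ii), use the critical edge of $H$ to show each part is independent, and finish by eigenvector switching.

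Two steps in your sketch are, however, genuinely underproved. First, stability does not give that $G[L\cap V_i,L\cap V_j]$ is \emph{complete} between heavy vertices; it only bounds the total number of missing cross-edges by $o(n^2)$, so an individual heavy vertex may still miss many cross-neighbours. The greedy embedding must therefore be run with common-neighbourhood estimates of the kind this paper proves in Claims~\ref{CLA2.4} and \ref{CLA2.5} (any bounded set of vertices of large cross-degree has a large common neighbourhood avoiding the exceptional sets), not with literal completeness; as written the embedding step has a hole. Second, the conclusion "each $V_i$ is independent, hence switching forces $G=T_{n,r}$" silently uses that among all $n$-vertex $r$-partite graphs the spectral radius is uniquely maximized by $T_{n,r}$; this is true but is a separate lemma and is precisely the "equality case" analysis the paper credits to \cite{Zhai2023}. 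Finally, the explicit threshold is asserted rather than derived: with the admissible range $c<r^{-8(r+21)(r+1)}$ of Lemma~\ref{thm0}, forcing $\lfloor c\ln n\rfloor\geq|V(H)|$ requires $n\geq e^{|V(H)|r^{8(r+21)(r+1)}}$, which does not match the exponent $r^{(2r+9)(r+1)}$ in the statement; recovering that constant requires Nikiforov's original quantitative argument, not the qualitative stability dichotomy quoted here.
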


Now we are ready to give the proof of Theorem \ref{THM1.3}.

\begin{proof}
Choose an arbitrary graph $G\in {\rm SPEX}_{3}(n,\theta(1,q,r))$.
Clearly, $G$ is connected.
Otherwise, we first choose two components $G_1$ and $G_2$ with $\rho(G_1)=\rho(G),$ and then add an edge between $G_1$ and $G_2$ to obtain a new graph with larger spectral radius, which gives a contradiction.
By the Perron-Frobenius theorem,
there exists a unique unit positive eigenvector
$\mathbf{x}=(x_1,\ldots,x_n)^{\mathsf{T}}$ corresponding to $\rho(G)$,
where $x_u$ is the coordinate of $\mathbf{x}$ corresponding to the vertex $u$ of $G$.
We shall refer to such an eigenvector as the \emph{Perron vector} of $G$.
Set $x_{u^*}=\max\{x_i: i\in V(G)\}$.
Let $\varepsilon$ be defined as in \eqref{EQU-1}.
Recall that $K_{\lceil\frac{n-2}{2}\rceil,\lfloor\frac{n-2}{2}\rfloor}\circ K_3$ is $\theta(1,q,r)$-free.
Hence,
\begin{align}\label{EQU-5}
\rho(G)&\geq \rho(K_{\lceil\frac{n-2}{2}\rceil,\lfloor\frac{n-2}{2}\rfloor}\circ K_3)
> \rho(K_{\lceil\frac{n-2}{2}\rceil,\lfloor\frac{n-2}{2}\rfloor})\geq \frac{\mathbf{1}^{\mathsf{T}}A(K_{\lceil\frac{n-2}{2}\rceil,\lfloor\frac{n-2}{2}\rfloor})\mathbf{1}}
{\mathbf{1}^{\mathsf{T}}\mathbf{1}}\nonumber\\
  &=\frac{2e(K_{\lceil\frac{n-2}{2}\rceil,\lfloor\frac{n-2}{2}\rfloor})}{n-2}>\frac{n-2.1}{2}
    \geq \max\left\{\Big(\frac12-\delta\Big)n,\Big(\frac12-\varepsilon^{\frac12}\Big)n\right\}.
\end{align}
Then by Lemma \ref{lem3.5}, $G$ can be obtained from $T_{n,2}$
by adding and deleting at most $\varepsilon n^2$ edges.

A shortest odd cycle of $G$ is denoted by $C=w_1w_2\dots w_{g}w_1$.
If $g\geq 7$, then $G$ is $C_5$-free.
By Lemma \ref{lem3.1} ($ii$),
 $\rho(G)\leq \rho(K_{\lceil\frac{n-2}{2}\rceil,\lfloor\frac{n-2}{2}\rfloor}\circ K_3)$,
with equality if and only if $G\cong K_{\lceil\frac{n-2}{2}\rceil,\lfloor\frac{n-2}{2}\rfloor}\circ K_3$.
Recall that $\rho(G)\geq \rho(K_{\lceil\frac{n-2}{2}\rceil,\lfloor\frac{n-2}{2}\rfloor}\circ K_3)$.
Thus, $G\cong K_{\lceil\frac{n-2}{2}\rceil,\lfloor\frac{n-2}{2}\rfloor}\circ K_3$.
However,  $K_{\lceil\frac{n-2}{2}\rceil,\lfloor\frac{n-2}{2}\rfloor}\circ K_3$ contains a copy of $C_3$,
contradicting $g\geq 7$.
Thus, $g\in \{3,5\}$.
By Definition \ref{DEF2.1},
$G$ is $\theta(1,q,r)$-good,
and thus Claims \ref{CLA2.1}-\ref{CLA2.7} hold for $G$.

\begin{claim}\label{CLA3.1}
$S\subseteq V(C)$.
\end{claim}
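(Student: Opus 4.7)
The approach is to mimic the edge-counting argument of Claim \ref{CLA2.8} but to replace the edge comparison with a Rayleigh-quotient comparison. Suppose for contradiction that some $u_0\in S\setminus V(C)$ exists, so $d_G(u_0)\le (\tfrac12-4\varepsilon^{1/2})n$. Form $G^{\star}$ by deleting every edge at $u_0$ and joining $u_0$ to $\overline{V}_j\setminus\{u_0\}$ for a suitably chosen $j\in\{1,2\}$. Claim \ref{CLA2.7}, whose proof is symmetric in $V_1,V_2$, ensures $G^{\star}$ is $\theta(1,q,r)$-free, while $u_0\notin V(C)$ guarantees $C\subseteq G^{\star}$ and hence $G^{\star}$ is non-bipartite. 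A contradiction with $G\in\mathrm{SPEX}_3(n,\theta(1,q,r))$ will follow once $\rho(G^{\star})>\rho(G)$ is established.

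Using the Perron vector $\mathbf{x}$ of $G$ and the identity $\rho(G)\,x_{u_0}=\sum_{v\in N_G(u_0)} x_v$, the Rayleigh quotient gives
\[
\rho(G^{\star})-\rho(G)\ \ge\ \frac{2\, x_{u_0}}{\mathbf{x}^{\mathsf{T}}\mathbf{x}}\Bigl(\sum_{v\in\overline{V}_j\setminus\{u_0\}} x_v\ -\ \rho(G)\,x_{u_0}\Bigr),
\]
so it suffices to show $\sum_{v\in\overline{V}_j\setminus\{u_0\}} x_v>\rho(G)\,x_{u_0}$. The right-hand side is bounded above by $d_G(u_0)\,x_{u^{*}}\le(\tfrac12-4\varepsilon^{1/2})n\cdot x_{u^{*}}$ because $u_0\in S$.

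To lower-bound the left-hand side I first locate $u^{*}$: the inequality $d_G(u^{*})\ge\rho(G)\ge(n-2.1)/2$ from \eqref{EQU-5} prevents $u^{*}\in S$, and $W\subseteq S$ by Claim \ref{CLA2.6}, so $u^{*}\in\overline{V}_1\cup\overline{V}_2$. By symmetry suppose $u^{*}\in\overline{V}_1$ and choose $j=2$. Claim \ref{CLA2.6} forces $N_{\overline{V}_1}(u^{*})=\varnothing$, so $N_G(u^{*})\subseteq(S\cup W)\cup V_2$, and the Perron identity at $u^{*}$ combined with $|S\cup W|\le\tfrac32\varepsilon^{1/2}n$ (Claims \ref{CLA2.2}--\ref{CLA2.3}) yields $\sum_{v\in V_2} x_v\ge(\rho(G)-\tfrac32\varepsilon^{1/2}n)\,x_{u^{*}}$. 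Peeling off $V_2\cap(S\cup W)$ and $\{u_0\}$ then gives $\sum_{v\in\overline{V}_2\setminus\{u_0\}} x_v\ge(\rho(G)-3\varepsilon^{1/2}n-1)\,x_{u^{*}}$, which strictly exceeds $(\tfrac12-4\varepsilon^{1/2})n\cdot x_{u^{*}}$ once $n$ is large by \eqref{EQU-1} and \eqref{EQU-5}.

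The main obstacle will be keeping the final inequality strict: the slack $\rho(G)-(\tfrac12-4\varepsilon^{1/2})n\approx 4\varepsilon^{1/2}n$ must absorb the $O(\varepsilon^{1/2}n)$ loss incurred while reducing $V_2$ down to $\overline{V}_2\setminus\{u_0\}$, and this works precisely because $\varepsilon^{1/2}n\to\infty$. A minor bookkeeping matter is justifying that Claim \ref{CLA2.7} also applies with $\overline{V}_2$ replaced by $\overline{V}_1$; since the proof of that claim treats the two color classes interchangeably, this is immediate.
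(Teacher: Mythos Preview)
Your proposal is correct and follows essentially the same route as the paper's proof: locate $u^{*}$ outside $S\cup W$, use Claim~\ref{CLA2.6} to see its neighbors lie in $S\cup\overline{V}_2$, deduce a lower bound on $\sum_{v\in\overline{V}_2\setminus\{u_0\}}x_v$ via the eigenvector identity at $u^{*}$, and compare against $\sum_{v\in N_G(u_0)}x_v\le d_G(u_0)x_{u^{*}}$ through the Rayleigh quotient. The paper's bookkeeping is marginally tighter (it uses $|S|\le\varepsilon^{1/2}n$ directly, obtaining $(\rho(G)-\varepsilon^{1/2}n-1)x_{u^{*}}$ rather than your $(\rho(G)-3\varepsilon^{1/2}n-1)x_{u^{*}}$), and it fixes the ``without loss of generality'' choice of $u^{*}\in V_1$ before invoking Claim~\ref{CLA2.7}, so no appeal to the symmetry of that claim is needed; but these are cosmetic differences only.
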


\begin{proof}
Assume without loss of generality that $u^*\in V_1$.
Since $x_{u^*}=\max\{x_i: i\in V(G)\}$,
we have $\rho(G)x_{u^*}=\sum_{v\in N_{G}(u^*)}x_v\leq d_G(u^*)x_{u^*}$.
This yields that $d_G(u^*)\geq \rho(G)\geq (\frac12-\varepsilon^{\frac12})n$.
Thus, $u^*\notin S$.
From Claim \ref{CLA2.6} we know that $W\subseteq S$, and hence
$u^*\in \overline{V}_1$.
Clearly, $d_G(u^*)\geq (\frac12-\varepsilon^{\frac12})n\geq 24\varepsilon^{\frac12} n$.
Again by Claim \ref{CLA2.6}, we get $N_{\overline{V}_1}(u^*)=\varnothing$.
Combining these with Claim \ref{CLA2.2} gives
\begin{align}\label{EQU-6}
\rho(G)x_{u^*}&=\!\!\sum_{v\in N_{S}(u^*)}x_v+\sum_{v\in N_{\overline{V}_2}(u^*)}x_v
\leq \varepsilon^{\frac12} n x_{u^*}+\sum_{v\in \overline{V}_2}x_v.
\end{align}

Suppose to the contrary that there exists a vertex $u_0\in S\setminus V(C)$.
Setting $u=u_0$ in Claim \ref{CLA2.7},
we can see that $G^{\star}$ is $\theta(1,q,r)$-free.
Moreover, it is clear that $C\subseteq G^{\star}$, which implies that $G^{\star}$ is non-bipartite.
In what follows, we shall obtain a contradiction by showing that $\rho(G^{\star})>\rho(G)$.
From the definition of $S$ we know that $d_G(u_0)\leq \big(\frac12-4\varepsilon^{\frac12}\big)n$.
Combining \eqref{EQU-5} and \eqref{EQU-6},
we have
\begin{align}\label{EQU-7}
\sum_{v\in \overline{V}_2\setminus \{u_0\}}x_v\geq (\rho(G)-\varepsilon^{\frac12} n-1)x_{u^*}
>\Big(\frac12-2\varepsilon^{\frac12}\Big)n x_{u^*}>d_G(u_0)x_{u^*}.
\end{align}
Consequently,
\begin{center}
  $\rho(G^{\star})-\rho(G) \geq  \mathbf{x}^{\mathsf{T}}\big(A(G^{\star})-A(G)\big)\mathbf{x}
                  = 2x_{u_0}\Big(\sum\limits_{v\in \overline{V}_2\setminus \{u_0\}}x_v-\sum\limits_{v\in N_G(u_0)}x_v\Big)>0,$
\end{center}
contradicting that $G\in {\rm SPEX}_{3}(n,\theta(1,q,r))$.
Hence, $S\subseteq V(C)$.
\end{proof}

\begin{claim}\label{CLA3.2}
For every $u\in V(G')$, we have $x_u\geq  (1-4\varepsilon^{\frac12})x_{u^*}$.
\end{claim}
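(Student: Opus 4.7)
The plan is to prove this by a local switching argument, exploiting Claim~\ref{CLA2.7} to produce a $\theta(1,q,r)$-free competitor to $G$ whenever $x_u$ is too small. Suppose for contradiction that some $u\in V(G')$ satisfies $x_u<(1-4\varepsilon^{\frac12})x_{u^*}$. Since $u\in V(G')$ we have $u\notin V(C)$; hence by Claim~\ref{CLA3.1}, $u\notin S$, and by Claim~\ref{CLA2.6} (which gives $W\subseteq S$), $u\notin W$. I would then apply Claim~\ref{CLA2.7} with $u_0=u$ to obtain a graph $G^{\star\star}$ whose neighborhood at $u$ is exactly $\overline{V}_2\setminus\{u\}$; it is $\theta(1,q,r)$-free by that claim, and since the switching touches no edge of the odd cycle $C$ (as $u\notin V(C)$), we still have $C\subseteq G^{\star\star}$, so $G^{\star\star}$ remains non-bipartite and qualifies as a competitor to $G$ in $\mathrm{SPEX}_3(n,\theta(1,q,r))$.

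Next I would estimate $\rho(G^{\star\star})-\rho(G)$ via the Rayleigh quotient with the Perron vector $\mathbf{x}$:
\[
\rho(G^{\star\star})-\rho(G)\geq \mathbf{x}^{\mathsf{T}}(A(G^{\star\star})-A(G))\mathbf{x}=2x_u\!\left(\sum_{v\in\overline{V}_2\setminus\{u\}}x_v-\sum_{v\in N_G(u)}x_v\right)\!,
\]
and the eigenvector equation at $u$ rewrites $\sum_{v\in N_G(u)}x_v=\rho(G)x_u$. To lower bound $\sum_{v\in\overline{V}_2}x_v$, I would use that $u^*\in\overline{V}_1$ (inherited from the assumption made in the proof of Claim~\ref{CLA3.1}); since $d_G(u^*)\geq\rho(G)>24\varepsilon^{\frac12}n$, Claim~\ref{CLA2.6} forces $N_{\overline{V}_1}(u^*)=\varnothing$, so $N_G(u^*)\subseteq S\cup\overline{V}_2$. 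The eigenvector equation at $u^*$ together with $|S|\leq\varepsilon^{\frac12}n$ (Claim~\ref{CLA2.2}) then yields $\sum_{v\in\overline{V}_2}x_v\geq(\rho(G)-\varepsilon^{\frac12}n)x_{u^*}$, from which $\sum_{v\in\overline{V}_2\setminus\{u\}}x_v\geq(\rho(G)-\varepsilon^{\frac12}n-1)x_{u^*}$.

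Finally, plugging the hypothesis $x_u<(1-4\varepsilon^{\frac12})x_{u^*}$ into the Rayleigh estimate and simplifying, I expect to reach
\[
\rho(G^{\star\star})-\rho(G)>2x_ux_{u^*}\bigl(4\varepsilon^{\frac12}\rho(G)-\varepsilon^{\frac12}n-1\bigr),
\]
and the factor in parentheses is strictly positive for sufficiently large $n$ thanks to $\rho(G)\geq(n-2.1)/2$ from \eqref{EQU-5}. This strict improvement contradicts $G\in\mathrm{SPEX}_3(n,\theta(1,q,r))$, establishing the claim. The main obstacle is constant bookkeeping: the threshold $4\varepsilon^{\frac12}$ in the statement has to be large enough that $4\varepsilon^{\frac12}\rho(G)$ beats the combined slack $\varepsilon^{\frac12}n$ (coming from the $|S|$ bound at $u^*$) plus the $+1$ cost of excluding $u$ itself from $\overline{V}_2$, and the lower bound $\rho(G)\geq(n-2.1)/2$ supplies precisely this margin for $n$ sufficiently large.
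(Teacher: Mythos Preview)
Your proposal is correct and follows essentially the same approach as the paper's proof: assume $x_u$ is too small, apply Claim~\ref{CLA2.7} to produce a $\theta(1,q,r)$-free non-bipartite competitor $G^{\star}$, use the eigenvector equation at $u^*$ together with $|S|\le\varepsilon^{1/2}n$ to lower bound $\sum_{v\in\overline{V}_2\setminus\{u\}}x_v$ by $(\rho(G)-\varepsilon^{1/2}n-1)x_{u^*}$, and conclude via the Rayleigh quotient that $\rho(G^{\star})>\rho(G)$ from the inequality $4\varepsilon^{1/2}\rho(G)-\varepsilon^{1/2}n-1>0$. The only cosmetic difference is that the paper invokes the cruder bound $\rho(G)>n/3$ rather than $(n-2.1)/2$ in the final step, but the computation is otherwise identical.
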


\begin{proof}
Assume without loss of generality that $u^*\in V_1$.
By \eqref{EQU-6} and Claim \ref{CLA2.1}, we have
   $$\rho(G)x_{u^*}<\varepsilon^{\frac12} n x_{u^*}+\sum_{v\in \overline{V}_2}x_v
   \leq \varepsilon^{\frac12} nx_{u^*}+|V_2|x_{u^*}
   \leq \Big(\frac12+3\varepsilon^{\frac12}\Big) nx_{u^*}.$$

Now we show $x_u\geq (1-4\varepsilon^{\frac12})x_{u^*}$ for each $u\in V(G)\setminus S$.
Suppose to the contrary that $x_{u_0}<(1-4\varepsilon^{\frac12})x_{u^*}$
for some $u_0\in V(G)$.
Setting $u=u_0$ in Claim \ref{CLA2.7},
we can see that $G^{\star}$ is $\theta(1,q,r)$-free.
Moreover, it is clear that $C\subseteq G^{\star}$, which implies that $G^{\star}$ is non-bipartite.

In view of \eqref{EQU-5},
we have $\rho(G)>\frac{n-2.1}{2}>\frac{n}{3}$.
Combining \eqref{EQU-7},
we obtain
\begin{align*}
\sum_{v\in \overline{V}_2\setminus\{u_0\}}x_v\!\!-\!\!\sum_{v\in N_G(u_0)}x_{v}
&=\sum_{v\in \overline{V}_2\setminus\{u_0\}}x_v\!\!-\!\!\rho(G)x_{u_0}\nonumber\\
&\geq(\rho(G)-\varepsilon^{\frac12} n-1)x_{u^*}
-\rho(G)\Big(1-4\varepsilon^{\frac12}\Big)x_{u^*}   \nonumber\\
&\geq (4\varepsilon^{\frac12}\rho(G)-\varepsilon^{\frac12} n-1)x_{u^*}
\geq \Big(\frac43\varepsilon^{\frac12}n-\varepsilon^{\frac12} n-1\Big)x_{u^*}>0.  \nonumber
\end{align*}
Consequently,
\begin{align*}
\rho(G^*)-\rho(G)
\geq \mathbf{x}^{\mathsf{T}}\big(A(G^*)-A(G)\big)\mathbf{x}
=2x_{u_0}\Big(\sum_{v\in \overline{V}_2\setminus\{u_0\}}x_v
-\sum_{v\in N_G(u_0)}x_{v}\Big)>0,
\end{align*}
contradicting that $G\in {\rm SPEX}_{3}(n,\theta(1,q,r))$.
\end{proof}

By Claims \ref{CLA2.6} and \ref{CLA3.1}, we have $W\subseteq S \subseteq V(C)$.
Set $G'=G-V(C)$, and $V_i'=V_i\setminus V(C)$ for $i\in\{1,2\}$.
It follows that $\overline{V}_i\setminus V(C)\subseteq V_i'\subseteq \overline{V}_i$.
For any $i\in \{1,2\}$ and any $u\in V_i'$,
from \eqref{EQU-3} we know that $d_{V_{3-i}}(u)>(\frac{1}{2}-8\varepsilon^{\frac12})n>24\varepsilon^{\frac12}n.$
Thus, by Claim \ref{CLA2.6}, $d_{V_i'}(u)=0$.
We can further obtain that $e(V_i')=0$ for every $i\in \{1,2\}$.
Using a similar argument as in the proof of Claim \ref{CLA2.9},
we obtain the following claim.

\begin{claim}\label{CLA3.3}
For every $u_0\in V(G)$ with $d_{G}(u_0)\geq 24\varepsilon^{\frac12}n$,
we have $N_{G'}(u_0)\subseteq V_i'$ for some $i\in \{1,2\}$.
\end{claim}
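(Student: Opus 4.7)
The plan is simply to transport the proof of Claim \ref{CLA2.9} verbatim, since the definitions of $S$, $W$, $\overline{V}_i$, $V_i'$, and $G'$ used in Section \ref{section4} coincide with those of Section \ref{section2} (as set up in Claims \ref{CLA3.1} and \ref{CLA3.2} and the paragraph following them). In particular, Claim \ref{CLA2.6} is available here: it applies to any vertex $u_0$ with $d_G(u_0) \geq 24\varepsilon^{1/2} n$, independently of whether $G$ was chosen to maximize the number of edges or the spectral radius.

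Concretely, I would proceed as follows. Let $u_0 \in V(G)$ with $d_G(u_0) \geq 24\varepsilon^{1/2} n$. By Claim \ref{CLA2.6}, there exists $i \in \{1,2\}$ such that $N_{\overline{V}_{3-i}}(u_0) = \varnothing$. By construction, $V_{3-i}' = V_{3-i} \setminus V(C) \subseteq V_{3-i} \setminus (W \cup S) = \overline{V}_{3-i}$ (using the containment $W \subseteq S \subseteq V(C)$ established just before the claim). Hence $N_{V_{3-i}'}(u_0) \subseteq N_{\overline{V}_{3-i}}(u_0) = \varnothing$. Since $V(G') = V_1' \cup V_2'$, we conclude $N_{G'}(u_0) \subseteq V_i'$, as required.

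There is essentially no obstacle: the only thing one needs to verify is that the objects $\overline{V}_{3-i}$ and $V_{3-i}'$ are defined the same way in Section \ref{section4} as in Section \ref{section2}, and that the inclusion $V_{3-i}' \subseteq \overline{V}_{3-i}$ has been recorded (which it has, in the paragraph preceding the claim). So the statement is an immediate corollary of Claim \ref{CLA2.6} together with the partition $V(G') = V_1' \cup V_2'$, and the proof is a one-line deduction rather than a new argument.
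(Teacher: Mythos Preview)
Your proposal is correct and is exactly the approach the paper takes: it simply states that Claim~\ref{CLA3.3} follows by the same argument as Claim~\ref{CLA2.9}, using Claim~\ref{CLA2.6} together with the inclusion $V_{3-i}'\subseteq \overline{V}_{3-i}$ recorded just before the claim. There is nothing to add.
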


%\begin{proof}
%Clearly, $d_{G}(u_0)\geq 24\varepsilon^{\frac12}n$.
%By Claim \ref{CLA2.6},
%we have $N_{\overline{V}_{3-i}}(u_0)=\varnothing$ for some $i\in \{1,2\}$.
%Since $V_{3-i}'\subseteq \overline{V}_{3-i}$,
%it follows that $N_{V_{3-i}'}(u_0)=\varnothing$.
%Furthermore, since $V(G')=V_1'\cup V_2'$,
%we have $N_{G'}(w_1)\subseteq V_{i}'$, as desired.
%\end{proof}

In what follows, we divide the proof into the following three cases with respect to the values of $q$ and $r$.

\noindent{\textbf{Case 1.}} $r$ is even.

From Theorem \ref{THM1.2} we know that $K_{\lceil\frac{n-1}{2}\rceil,\lfloor\frac{n-1}{2}\rfloor}\bullet K_3$ is $\theta(1,q,r)$-free.
Then,
\begin{align}\label{EQU-8}
\rho(G)&\geq \rho(K_{\lceil\frac{n-1}{2}\rceil,\lfloor\frac{n-1}{2}\rfloor}\bullet K_3)
> \rho(K_{\lceil\frac{n-1}{2}\rceil,\lfloor\frac{n-1}{2}\rfloor})
= \sqrt{\Big\lfloor\frac{(n-1)^2}{4}\Big\rfloor}.
\end{align}

Suppose first that $g=5$.
Then, $G$ is $C_3$-free.
By Lemma \ref{lem3.1}, $\rho(G)\leq \rho(SK_{\lceil\frac{n-1}{2}\rceil, \lfloor\frac{n-1}{2}\rfloor})$.
On the other hand,
since $SK_{\lceil\frac{n-1}{2}\rceil, \lfloor\frac{n-1}{2}\rfloor}$ is a proper subgraph of $K_{\lceil\frac{n-1}{2}\rceil,\lfloor\frac{n-1}{2}\rfloor}\bullet K_3$,
it follows that $\rho(G)\geq\rho(K_{\lceil\frac{n-1}{2}\rceil,\lfloor\frac{n-1}{2}\rfloor}\bullet K_3)
>\rho(SK_{\lceil\frac{n-1}{2}\rceil, \lfloor\frac{n-1}{2}\rfloor})$, which gives a contradiction.
Thus, $g=3$ and $C=w_1w_2w_3w_1$.
Recall that $N_{G'}(v)=N_{G}(v)\cap V(G')$ and $d_{G'}(v)=|N_{G'}(v)|$ for any $v\in V(G)$.
Without loss of generality,
we may assume that $d_{G'}(w_3)\leq d_{G'}(w_i)$ for  any $i\in \{1,2\}$.

\begin{claim}\label{CLA3.4}
For every $i\in \{1,2\}$, we have $d_{G'}(w_i)\geq \frac{n}{12}$.
\end{claim}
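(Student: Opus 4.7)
The plan is to adapt the edge-counting argument of Claim~\ref{CLA2.10} to the spectral setting, replacing Lemma~\ref{LEM2.1} with its spectral analogue Lemma~\ref{lem3.4} and using Lemma~\ref{lem3.3} to compare $\rho(G)$ with the spectral radius of a two-vertex deletion. The key inputs are the lower bound $\rho^2(G)>\lfloor (n-1)^2/4\rfloor$ coming from \eqref{EQU-8}, the identity $\rho^2(T_{n-2,2})=\lfloor (n-2)^2/4\rfloor$, and the assumption that $d_{G'}(w_3)$ is minimal among the three cycle vertices.

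First I would handle $w_1$. The subgraph $G-\{w_1,w_3\}$ is still $\theta(1,q,r)$-free (as a subgraph of $G$), and since $\theta(1,q,r)$ is color-critical with chromatic number three, Lemma~\ref{lem3.4} applied to this $(n-2)$-vertex graph yields
\[
\rho^{2}\big(G-\{w_1,w_3\}\big)\le \rho^{2}(T_{n-2,2})=\Big\lfloor\frac{(n-2)^2}{4}\Big\rfloor.
\]
Applying Lemma~\ref{lem3.3} twice, first at $w_1$ and then at $w_3$, I get
\[
\rho^{2}(G)\le \rho^{2}\big(G-\{w_1,w_3\}\big)+2d_G(w_1)+2d_G(w_3).
\]
Combining these two displays with \eqref{EQU-8} and the easy identity $\lfloor(n-1)^2/4\rfloor-\lfloor(n-2)^2/4\rfloor\ge (n-2)/2$ (which holds for $n$ of either parity), I obtain $d_G(w_1)+d_G(w_3)\ge (n-2)/4$. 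Since $w_1,w_3$ each have exactly two neighbors on $C$, this translates to $d_{G'}(w_1)+d_{G'}(w_3)\ge (n-18)/4$, and then the minimality assumption $d_{G'}(w_3)\le d_{G'}(w_1)$ gives $d_{G'}(w_1)\ge (n-18)/8\ge n/12$ for $n$ sufficiently large.

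The bound on $d_{G'}(w_2)$ is obtained by the symmetric argument, applying Lemma~\ref{lem3.3} to the pair $\{w_2,w_3\}$ instead of $\{w_1,w_3\}$, and again invoking $d_{G'}(w_3)\le d_{G'}(w_2)$.

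I don't foresee a real obstacle here: the only subtlety is making sure Lemma~\ref{lem3.4} is applicable to $G-\{w_1,w_3\}$ (resp.\ $G-\{w_2,w_3\}$), which requires $n-2$ to exceed the threshold $n_0(\theta(1,q,r))$ from that lemma; this is automatic under our hypothesis that $n$ is sufficiently large. Everything else is a routine combination of the two-step Lemma~\ref{lem3.3} bound with the lower bound on $\rho(G)$ provided by the construction $K_{\lceil (n-1)/2\rceil,\lfloor(n-1)/2\rfloor}\bullet K_3$.
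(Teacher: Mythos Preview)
Your proposal is correct and follows essentially the same route as the paper: delete $\{w_1,w_3\}$, apply Lemma~\ref{lem3.3} twice, bound $\rho(G-\{w_1,w_3\})$ via Lemma~\ref{lem3.4}, and then use \eqref{EQU-8} together with the minimality of $d_{G'}(w_3)$. The only cosmetic differences are that the paper writes the telescoping in terms of $d_{G'}(w_i)=d_G(w_i)-2$ (picking up constants $+4$ and $+8$) and states the intermediate bound as $n/3$ rather than your $(n-18)/4$, but these lead to the same conclusion $d_{G'}(w_i)\ge n/12$.
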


\begin{proof}
Assume that $G^*=G-\{w_1,w_3\}$.
Clearly, $d_{G'}(w_i)= d_G(w_i)-2$ for each $i\in \{1,2,3\}$.
Recursively applying Lemma \ref{lem3.3},
we can obtain that
\begin{align*}
\rho^2(G)&\leq \rho^2(G-\{w_1\})+2d_{G'}(w_1)+4
     \leq   \rho^2(G^*)+2d_{G'}(w_1)+2d_{G'}(w_3)+8.
\end{align*}
By Lemma \ref{lem3.4}, we have $\rho(G^*)\leq \sqrt{\lfloor\frac{(n-2)^2}{4}\rfloor}$.
Combining these with \eqref{EQU-8}, we obtain
\begin{align*}
4d_{G'}(w_1)\geq 2d_{G'}(w_1)+2d_{G'}(w_3)\geq \rho^2(G)-\rho^2(G^*)-8\geq \frac{n}{3},
\end{align*}
which yields that $d_{G'}(w_1)\geq \frac{n}{12}$.
Similarly, $d_{G'}(w_2)\geq \frac{n}{12}$.
\end{proof}

By Claim \ref{CLA3.3}, we may assume without loss of generality that $N_{G'}(w_1)\subseteq V_{1}'$
and $N_{G'}(w_2)\subseteq V_{j_0}'$ for some $j_0\in \{1,2\}$.
Furthermore, by a similar discussion as in Claim \ref{CLA2.11},
we have  $N_{G'}(w_i)\cap N_{G'}(w_j)=\varnothing$ for any $\{i,j\}\subseteq \{1,2,3\}$.

%\begin{claim}\label{CLA3.5}
%If $d_{G'}(w_3)\leq 24\varepsilon^{\frac12} n$,
%then $x_{w_3}\leq x_{w_i}$ for $i\in \{1,2\}$.
%\end{claim}
%
%
%\begin{proof}
%Since $d_{G'}(w_3)\leq 24\varepsilon^{\frac12} n$, we have
%$$\rho(G)x_{w_3}=x_{w_1}+x_{w_2}+\sum_{v\in N_{G'}(w_3)}x_v\leq 2x_{u^*}+d_{G'}(w_3)x_{u^*}\leq 25\varepsilon^{\frac12} nx_{u^*}.$$
%Combining Claims \ref{CLA3.2}, \ref{CLA3.4} and \eqref{EQU-1}, we have
%$$\rho(G)x_{w_i}>\sum_{v\in N_{G'}(w_i)}x_v\geq \frac{n}{12}(1-4\varepsilon^{\frac12})x_{u^*}>25\varepsilon^{\frac12} nx_{u^*}>\rho(G)x_{w_3},$$
%as desired.
%\end{proof}

\noindent{\textbf{Subcase 1.1.}} $N_{G'}(w_i)\subseteq V_1'$ for all $i\in \{1,2,3\}.$

%\textcolor{blue}{
Assume that $\{j_1,j_2,j_3\}=\{1,2,3\}$, where $x_{w_{j_3}}=\max_{w\in V(C)}x_{w}$.
Note that $N_{G'}(w_{j_i})\subseteq V_1'$ for each $i\in \{1,2\}$.
Then by Claim \ref{CLA3.4}, we get
$\bigcup_{i=1}^{2}N_{G'}(w_{j_i})=\bigcup_{i=1}^{2}N_{V_1'}(w_{j_i})\neq \varnothing$.
This implies that $\sum_{i=1}^{2}\sum_{v\in N_{V_1'}(w_{j_i})}x_v>0$.
Set $$G^*=G-\bigcup_{i=1}^{2}\{vw_{j_i}~|~v\in N_{V_1'}(w_{j_i})\}
+\bigcup_{i=1}^{2}\{vw_{j_3}~|~v\in N_{V_1'}(w_{j_i})\}.$$
Consequently,
\begin{align*}
\rho(G^*)-\rho(G)
\geq \mathbf{x}^{\mathsf{T}}\big(A(G^*)-A(G)\big)\mathbf{x}
= 2\sum_{i=1}^{2}\Big((x_{w_{j_3}}-x_{w_{j_i}})\cdot\sum\limits_{v\in N_{V_1'}(w_{j_i})}x_v\Big)
\geq 0.
\end{align*}
Since $\mathbf{x}$ is a positive eigenvector of $G$,
we have $\rho(G) x_{w_{j_3}}=\sum_{v\in N_{G}(w_{j_3})}x_v$.
If $\rho(G^*)=\rho(G)$,
then $\mathbf{x}$ is also a positive eigenvector of $G^*$,
and so
\begin{align*}
\rho(G^*) x_{w_{j_3}}
=\sum_{v\in N_{G}(w_{j_3})}x_v+\sum_{i=1}^{2}\sum_{v\in N_{V_1'}(w_{j_i})}x_v
>\sum_{v\in N_{G}(w_{j_3})}x_v=\rho(G) x_{w_{j_3}},
\end{align*}
which contradicts $\rho(G^*)=\rho(G)$.
Thus, $\rho(G^*)>\rho(G)$.
However, this, together with the fact that $G^*$ is non-bipartite (as $C\subseteq G^*$) and $\theta(1,q,r)$-free,
 contradicts the choice of $G$.
%}

\noindent{\textbf{Subcase 1.2.}} $N_{V_2'}(w_j)\neq \varnothing$ for some $j\in \{2,3\}$.

%\textcolor{blue}{
Assume that $\{j_1,j_2,j_3\}=\{1,2,3\}$,
where $x_{w_{j_i}}=\max\{x_{w_j}~|~N_{V_i'}(w_j)\neq \varnothing, 1\leq j\leq 3\}$ for each $i\in \{1,2\}$.
By Claim \ref{CLA3.4} and $N_{G'}(w_1)\subseteq V_{1}'$,
we get $N_{V_1'}(w_1)\neq \varnothing$.
Thus, $x_{w_{j_1}}\geq x_{w_{1}}$.
Combining these with \eqref{EQU-1}, Claims \ref{CLA3.2} and \ref{CLA3.4}, we obtain
$$d_{G}(w_{j_1})x_{u^*}\geq \rho(G)x_{w_{j_1}}\geq \rho(G)x_{w_1}>\sum_{v\in N_{G'}(w_1)}x_v\geq \frac{n}{12}(1-4\varepsilon^{\frac12})x_{u^*}>24\varepsilon^{\frac12} nx_{u^*}.$$
So, $d_{G}(w_{j_1})\geq 24\varepsilon^{\frac12} n$.
By Claim  \ref{CLA3.3} and  $N_{V_1'}(w_1)\neq \varnothing$,  we have $N_{G'}(w_{j_1})\subseteq V_1'$.
This, together with $N_{V_2'}(w_{j_2})\neq \varnothing$,
implies that $j_1\neq j_2$.
Set
$$G^*
=G-\bigcup_{i=1}^{2}\{vw_{j_3}~|~v\in N_{V_i'}(w_{j_3})\}
   +\bigcup_{i=1}^{2}\{vw_{j_i}~|~v\in N_{V_i'}(w_{j_3})\}.$$
Consequently,
\begin{align}\label{EQU-9}
\rho(G^*)-\rho(G)
\geq \mathbf{x}^{\mathsf{T}}\big(A(G^*)-A(G)\big)\mathbf{x}
= 2\sum_{i=1}^{2}\Big((x_{w_{j_i}}-x_{w_{j_3}})\cdot\sum\limits_{v\in N_{V_i'}(w_{j_3})}x_v\Big)
\geq 0.
\end{align}
Clearly, $N_{G^*}(w_{j_3})=\{w_{j_1},w_{j_2}\}$,
and hence $G^{*}\subseteq K_{|V_1'|+1,|V_2'|+1}\bullet K_3$.
Combining these with Lemma \ref{lem3.2}, we get
\begin{align*}
\rho(G)\leq \rho(G^*)\leq \rho(K_{|V_1'|+1,|V_2'|+1}\bullet K_3)\leq \rho(K_{\lceil\frac{n-1}{2}\rceil,\lfloor\frac{n-1}{2}\rfloor}\bullet K_3).
\end{align*}
Note that  $K_{\lceil\frac{n-1}{2}\rceil,\lfloor\frac{n-1}{2}\rfloor}\bullet K_3$ is  non-bipartite and  $\theta(1,q,r)$-free.
Then by the choice of $G$, we get $\rho(G)\geq \rho(K_{\lceil\frac{n-1}{2}\rceil,\lfloor\frac{n-1}{2}\rfloor}\bullet K_3)$.
Thus,
\begin{align}\label{EQU-10}
\rho(G)= \rho(G^*)= \rho(K_{|V_1'|+1,|V_2'|+1}\bullet K_3)= \rho(K_{\lceil\frac{n-1}{2}\rceil,\lfloor\frac{n-1}{2}\rfloor}\bullet K_3).
\end{align}
Furthermore, again by Lemma \ref{lem3.2} and $G^{*}\subseteq K_{|V_1'|+1,|V_2'|+1}\bullet K_3$,
we get $G^*\cong K_{|V_1'|+1,|V_2'|+1}\bullet K_3\cong K_{\lceil\frac{n-1}{2}\rceil,\lfloor\frac{n-1}{2}\rfloor}\bullet K_3$.
To prove $G\cong K_{\lceil\frac{n-1}{2}\rceil,\lfloor\frac{n-1}{2}\rfloor}\bullet K_3$,
it suffices to show that $G\cong G^*$.
Otherwise, $N_{V_i'}(w_{j_3})\neq \varnothing$ for some $i\in \{1,2\}$.
Since $\mathbf{x}$ is a positive eigenvector of $G$,
we have
$$\rho(G) x_{w_{j_3}}=\sum_{v\in N_{G}(w_{j_3})}x_v>x_{w_{j_1}}+x_{w_{j_2}}.$$
In view of \eqref{EQU-10} and \eqref{EQU-9}, $\rho(G^*)=\rho(G)$
and $\mathbf{x}$ is also a positive eigenvector of $G^*$.
Thus,
\begin{align*}
\rho(G) x_{w_{j_3}}=\rho(G^*) x_{w_{j_3}}=x_{w_{j_1}}+x_{w_{j_2}}<\rho(G) x_{w_{j_3}},
\end{align*}
a contradiction.
Thus, $G\cong G^*\cong K_{\lceil\frac{n-1}{2}\rceil,\lfloor\frac{n-1}{2}\rfloor}\bullet K_3$, as desired.

\noindent{\textbf{Case 2.}} $r$ is odd and $q=2$.

Let $H=SK_{\lceil\frac{n-1}{2}\rceil,\lfloor\frac{n-1}{2}\rfloor}-\{v\}$, where $v$ is the vertex of degree two in $SK_{\lceil\frac{n-1}{2}\rceil,\lfloor\frac{n-1}{2}\rfloor}$.
Then, $e(H)=\lfloor\frac{n^2-2n-3}{4}\rfloor$, and
$\rho(H)\geq \frac{\mathbf{1}^{\mathsf{T}}A(H)\mathbf{1}}{\mathbf{1}^{\mathsf{T}}\mathbf{1}}
  =\frac{2e(H)}{n-1}
  >\frac{n-1.1}{2}.$
Clearly, $SK_{\lceil\frac{n-1}{2}\rceil,\lfloor\frac{n-1}{2}\rfloor}$ is $C_{1+q}$-free, and hence $\theta(1,q,r)$-free.
Consequently,
\begin{align}\label{EQU-11}
\rho(G)\geq \rho(SK_{\lceil\frac{n-1}{2}\rceil,\lfloor\frac{n-1}{2}\rfloor})>\rho(H)
>\frac{n-1.1}{2}.
\end{align}

Suppose first that $g=3$. Then $C=w_1w_2w_3w_1$.
Assume without loss of generality that $d_{G'}(w_3)=\min_{w\in V(C)}d_{G'}(w)$.

\begin{claim}\label{CLA3.5}
For every $i\in \{1,2\}$, we have $d_{G'}(w_i)\geq \frac{n}{12}$.
\end{claim}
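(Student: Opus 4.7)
The plan is to mirror the proof of Claim \ref{CLA3.4} almost verbatim, substituting the spectral lower bound (\ref{EQU-11}) in place of (\ref{EQU-8}) and using the $\theta(1,q,r)$-free version of the spectral chromatic critical edge theorem on the vertex-deleted subgraph. Set $G^{*}=G-\{w_1,w_3\}$; since $G$ is $\theta(1,q,r)$-free, so is $G^{*}$, and $|V(G^{*})|=n-2$. Because each $w_i\in V(C)$ has exactly two neighbors inside $C$, we have $d_G(w_i)=d_{G'}(w_i)+2$ for $i\in\{1,3\}$. Applying Lemma \ref{lem3.3} twice, first removing $w_1$ from $G$ and then $w_3$ from $G-\{w_1\}$, gives
\[
\rho^2(G)\le \rho^2(G-\{w_1\})+2d_G(w_1)\le \rho^2(G^{*})+2d_{G'}(w_1)+2d_{G'}(w_3)+8.
\]

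Next, by Lemma \ref{lem3.4} applied to the color-critical graph $\theta(1,q,r)$ (with $\chi=3$), for sufficiently large $n$ we have $\rho(G^{*})\le \rho(T_{n-2,2})\le \sqrt{\lfloor (n-2)^2/4\rfloor}$. Combining this with the lower bound $\rho(G)>(n-1.1)/2$ from (\ref{EQU-11}) yields
\[
2d_{G'}(w_1)+2d_{G'}(w_3)\ge \frac{(n-1.1)^2}{4}-\Bigl\lfloor\frac{(n-2)^2}{4}\Bigr\rfloor-8.
\]
The leading part of the right-hand side is $((n-1.1)^2-(n-2)^2)/4 = (1.8n-2.79)/4=0.45n+O(1)$, which exceeds $n/3$ for all sufficiently large $n$. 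Since $d_{G'}(w_3)\le d_{G'}(w_1)$ by the minimality hypothesis on $w_3$, we get $4d_{G'}(w_1)\ge n/3$, i.e., $d_{G'}(w_1)\ge n/12$. The bound $d_{G'}(w_2)\ge n/12$ follows symmetrically by taking $G^{*}=G-\{w_2,w_3\}$ instead and again using $d_{G'}(w_3)\le d_{G'}(w_2)$.

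There is essentially no substantive obstacle: this claim is a direct spectral transcription of Claim \ref{CLA2.12} and an exact parallel of Claim \ref{CLA3.4}. The only point to verify is numerical, namely that the gap $\rho^2(G)-\rho^2(G^{*})$ comfortably dominates the loss of $8$ and still exceeds $n/3$; this is true because the weaker spectral lower bound $(n-1.1)/2$ (from the subdivided complete bipartite comparison graph) still yields a linear-in-$n$ margin of slope $0.45>1/3$.
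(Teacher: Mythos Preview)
Your proof is correct and follows essentially the same route as the paper's: set $G^{*}=G-\{w_1,w_3\}$, apply Lemma~\ref{lem3.3} twice to bound $\rho^2(G)-\rho^2(G^{*})$ by $2d_{G'}(w_1)+2d_{G'}(w_3)+8$, invoke Lemma~\ref{lem3.4} on $G^{*}$, and compare with the lower bound~(\ref{EQU-11}). The paper simply asserts $\rho^2(G)-\rho^2(G^*)-8\ge n/3$ without showing the arithmetic, whereas you spell out the $0.45n+O(1)$ margin; otherwise the arguments are identical.
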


\begin{proof}
Assume that $G^*=G-\{w_1,w_3\}$.
Clearly, $d_{G'}(w_i)= d_G(w_i)-2$ for each $i\in \{1,2,3\}$.
Recursively applying Lemma \ref{lem3.3},
we can obtain that
\begin{align*}
\rho^2(G)&\leq \rho^2(G-\{w_1\})+2d_{G'}(w_1)+4
     \leq   \rho^2(G^*)+2d_{G'}(w_1)+2d_{G'}(w_3)+8.
\end{align*}
By Lemma \ref{lem3.4}, we have $\rho(G^*)\leq \sqrt{\lfloor\frac{(n-2)^2}{4}\rfloor}$.
Combining these with \eqref{EQU-11}, we obtain
\begin{align*}
4d_{G'}(w_1)\geq 2d_{G'}(w_1)+2d_{G'}(w_3)\geq \rho^2(G)-\rho^2(G^*)-8\geq \frac{n}{3},
\end{align*}
which yields that $d_{G'}(w_1)\geq \frac{n}{12}$.
Similarly, $d_{G'}(w_2)\geq \frac{n}{12}$.
\end{proof}

By Claim \ref{CLA3.3}, we may assume without loss of generality that  $N_{G'}(w_1)\subseteq V_{1}'$.
By a similar discussion as in Claim \ref{CLA2.13} we get the following:\\
($i$) for any integer $i\in \{1,2,3\}$, we have $N_{G'}(w_i)\subseteq V_1'$;\\
($ii$) for any $\{i,j\}\subseteq \{1,2,3\}$, $N_{V_1'}(w_i)\cap N_{V_1'}(w_j)=\varnothing$.\\
Assume that $\{j_1,j_2,j_3\}=\{1,2,3\}$, where $x_{w_{j_3}}=\max_{w\in V(C)}x_{w}$.
From Claim \ref{CLA3.5}
we know that $\bigcup_{i=1}^{2}N_{V_1'}(w_{j_i})\neq \varnothing$.
Thus, $\sum_{i=1}^{2}\sum_{v\in N_{V_1'}(w_{j_i})}x_v>0$.
Set $$G^*=G-\bigcup_{i=1}^{2}\{vw_{j_i}~|~v\in N_{V_1'}(w_{j_i})\}
+\bigcup_{i=1}^{2}\{vw_{j_3}~|~v\in N_{V_1'}(w_{j_i})\}.$$
Consequently,
\begin{align*}
\rho(G^*)-\rho(G)
\geq \mathbf{x}^{\mathsf{T}}\big(A(G^*)-A(G)\big)\mathbf{x}
= 2\sum_{i=1}^{2}\Big((x_{w_{j_3}}-x_{w_{j_i}})\cdot\sum\limits_{v\in N_{V_1'}(w_{j_i})}x_v\Big)
\geq 0.
\end{align*}
Since $\mathbf{x}$ is a positive eigenvector of $G$,
we have $\rho(G) x_{w_{j_3}}=\sum_{v\in N_{G}(w_{j_3})}x_v$.
If $\rho(G^*)=\rho(G)$,
then $\mathbf{x}$ is also a positive eigenvector of $G^*$,
and so
\begin{align*}
\rho(G^*) x_{w_{j_3}}
=\sum_{v\in N_{G}(w_{j_3})}x_v+\sum_{i=1}^{2}\sum_{v\in N_{V_1'}(w_{j_i})}x_v
>\sum_{v\in N_{G}(w_{j_3})}x_v=\rho(G) x_{w_{j_3}},
\end{align*}
which contradicts $\rho(G^*)=\rho(G)$.
Thus, $\rho(G^*)>\rho(G)$.
However, this, together with the fact that $G^*$ is non-bipartite (as $C\subseteq G$) and $\theta(1,q,r)$-free,
contradicts the choice of $G$.

Thus $g\geq5$, which indicates that $G$ is $C_3$-free.
By Lemma \ref{lem3.1}, $\rho(G)\leq \rho(SK_{\lceil\frac{n-1}{2}\rceil, \lfloor\frac{n-1}{2}\rfloor})$ with equality if and only if $G\cong SK_{\lceil\frac{n-1}{2}\rceil, \lfloor\frac{n-1}{2}\rfloor}$.
Combining this with  \eqref{EQU-11} that $\rho(G)\geq \rho(SK_{\lceil\frac{n-1}{2}\rceil, \lfloor\frac{n-1}{2}\rfloor})$, we can find that
$G\cong SK_{\lceil\frac{n-1}{2}\rceil, \lfloor\frac{n-1}{2}\rfloor}$, as desired.

\noindent{\textbf{Case 3.}} $r$ is odd and $q\geq 4$.

We first give a claim.

\begin{claim}\label{CLA3.6}
For arbitrary $v_1,v_2,v_3\in V(C)$ with $d_{G'}(v_1)\geq d_{G'}(v_2)\geq d_{G'}(v_3)$,
$d_{G'}(v_1)\geq \frac{n}{18}$.
\end{claim}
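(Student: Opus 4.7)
The plan is to mirror the edge-counting argument of Claim \ref{CLA2.15}, but replace the inequality $e(G)\leq e(G^*)+\sum d_G(v_i)$ with its spectral analog obtained from Lemma \ref{lem3.3}. Set $G^*=G-\{v_1,v_2,v_3\}$. Since the vertices $v_1,v_2,v_3$ all lie on the cycle $C$ (whose length is $5$ in the current Subcase~3.1), each $v_i$ has exactly two neighbors in $C$, so $d_G(v_i)=d_{G'}(v_i)+2$ for every $i\in\{1,2,3\}$. Applying Lemma \ref{lem3.3} three times (peeling off $v_1$, then $v_2$, then $v_3$) and using the trivial bound $d_{G-\{v_1\}}(v_2)\le d_G(v_2)$ and $d_{G-\{v_1,v_2\}}(v_3)\le d_G(v_3)$ yields
\begin{align*}
\rho^2(G)\le \rho^2(G^*)+2\sum_{i=1}^{3}d_G(v_i)= \rho^2(G^*)+2\sum_{i=1}^{3}d_{G'}(v_i)+12.
\end{align*}

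Next I would use Lemma \ref{lem3.4} (the spectral chromatic critical edge theorem) applied to the $\theta(1,q,r)$-free graph $G^*$ on $n-3$ vertices; since $\theta(1,q,r)$ is color-critical with $\chi=3$ and $n$ is sufficiently large, this gives $\rho(G^*)\le \rho(T_{n-3,2})\le \sqrt{\lfloor(n-3)^2/4\rfloor}$, and hence $\rho^2(G^*)\le \lfloor(n-3)^2/4\rfloor$. On the other hand, by the lower bound \eqref{EQU-5} we have
\[
\rho^2(G)\ge \rho^2\bigl(K_{\lceil(n-2)/2\rceil,\lfloor(n-2)/2\rfloor}\circ K_3\bigr)>\Bigl(\frac{n-2.1}{2}\Bigr)^{2}.
\]

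Combining the two estimates gives
\[
2\sum_{i=1}^{3}d_{G'}(v_i)\ge \rho^2(G)-\rho^2(G^*)-12\ge \Bigl(\frac{n-2.1}{2}\Bigr)^{2}-\Bigl(\frac{n-3}{2}\Bigr)^{2}-12\ge \frac{n}{3}
\]
for $n$ sufficiently large, where the last inequality comes from expanding the difference of squares, which behaves like $0.45n$, and absorbing the constant $12$. Since $d_{G'}(v_1)\ge d_{G'}(v_2)\ge d_{G'}(v_3)$, we obtain $6d_{G'}(v_1)\ge \frac{n}{3}$, i.e.\ $d_{G'}(v_1)\ge \frac{n}{18}$, as required.

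The routine part is the expansion of $(n-2.1)^2-(n-3)^2$; the only mild subtlety is ensuring the additive loss of $12$ from the three applications of Lemma \ref{lem3.3} is dominated by the linear-in-$n$ gap between $\rho^2(G)$ and $\rho^2(G^*)$, which is immediate for large $n$. I expect no serious obstacle beyond checking that Lemma \ref{lem3.4} indeed applies to $G^*$ for the chosen $n-3$ (guaranteed since $n$ is taken large enough that $n-3\ge n_0(\theta(1,q,r))$).
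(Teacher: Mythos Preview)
Your proof is correct and follows the same approach as the paper: apply Lemma~\ref{lem3.3} three times to peel off $v_1,v_2,v_3$, bound $\rho(G^*)$ via Lemma~\ref{lem3.4}, bound $\rho(G)$ from below via \eqref{EQU-5}, and compare. One minor contextual inaccuracy: Claim~\ref{CLA3.6} is stated and proved in Case~3 \emph{before} the split into $g=5$ and $g=3$, so your parenthetical ``whose length is $5$ in the current Subcase~3.1'' is misplaced; fortunately the assertion $d_G(v_i)=d_{G'}(v_i)+2$ holds for both $g=3$ (trivially) and $g=5$ (a chord would create a triangle, contradicting minimality of $C$), so the argument goes through unchanged.
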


\begin{proof}
Assume that $G^*=G-\{v_1,v_2,v_3\}$.
Clearly, $d_{G'}(v_i)= d_G(v_i)-2$ for each $i\in \{1,2,3\}$.
Recursively applying Lemma \ref{lem3.3},
we can obtain that
\begin{align*}
\rho^2(G)&\leq \rho^2(G-\{v_1\})+2d_{G'}(v_1)+4
     \leq   \rho^2(G-\{v_1,v_2\})+2d_{G'}(v_1)+2d_{G'}(v_2)+8\\
     &\leq \rho^2(G^*)+2d_{G'}(v_1)+2d_{G'}(v_2)+2d_{G'}(v_3)+12.
\end{align*}
Using Lemma \ref{lem3.4}, we have $\rho(G^*)\leq \sqrt{\lfloor\frac{(n-3)^2}{4}\rfloor}$.
Combining these with \eqref{EQU-5}, we obtain
\begin{align*}
6d_{G'}(v_1)\geq 2d_{G'}(v_1)+2d_{G'}(v_2)+2d_{G'}(v_3)\geq \rho^2(G)-\rho^2(G^*)-12\geq \frac{n}{3},
\end{align*}
which yields that $d_{G'}(v_1)\geq \frac{n}{18}$.
\end{proof}

Suppose first that $g=5$.
By Claim \ref{CLA3.6},
there exist at least three vertices $v\in V(C)$ satisfying $d_{G'}(v)\geq \frac{n}{18}$.
Among these three vertices, there exist two adjacent vertices, say $v_1$ and $v_2$, in the cycle $C$.
However, by a similar discussion as in Claim \ref{CLA2.14},
we have $d_{G'}(v_1)<\frac{n}{18}$ or $d_{G'}(v_2)<\frac{n}{18}$,
which gives a contradiction.

It remains the case $g=3$.
Recall that $C=w_1w_2w_3w_1$.
Assume without loss of generality that
$d_{G'}(w_3)=\max_{w\in V(C)}d_{G'}(w)$.
By Claim \ref{CLA3.6}, we have $d_{G'}(w_3)\geq \frac{n}{18}$.
By Claim \ref{CLA3.3}, we may assume without loss of generality that $N_{G'}(w_3)\subseteq V_{1}'$.
By a similar discussion as in Claim \ref{CLA2.16}, we have  $N_{G'}(w_i)=\varnothing$ for each $i\in \{1,2\}$.
Then, $G\subseteq K_{|V_1'|,|V_2'|+1}\circ K_3$.
Combining \eqref{EQU-5} and Lemma \ref{lem3.2} gives
$$\rho(K_{\lceil\frac{n-2}{2}\rceil,\lfloor\frac{n-2}{2}\rfloor}\circ K_3)\leq \rho(G)\leq \rho(K_{|V_1'|,|V_2'|+1}\circ K_3)\leq \rho(K_{\lceil\frac{n-2}{2}\rceil,\lfloor\frac{n-2}{2}\rfloor}\circ K_3),$$
and hence $G\cong K_{\lceil\frac{n-2}{2}\rceil,\lfloor\frac{n-2}{2}\rfloor}\circ K_3$, as desired.

This completes the proof of Theorem \ref{THM1.3}.
\end{proof}


\begin{thebibliography}{99}
\setlength{\itemsep}{0pt}

\bibitem {Amin2013}
K. Amin, J. Faudree, R.J. Gould, E. Sidorowicz,
On the non-$(p-1)$-partite $K_p$-free graphs,
\emph{Discuss. Math. Graph Theory} \textbf{33} (1) (2013) 9--23.



%\bibitem {Bataineh2007}
%M.S.A. Bataineh, Some extermal problems in graph theory,
%Ph.D thesis, Curtin University of Technology, Australia (2007).

\bibitem {Bataineh2016}
M.S.A. Bataineh, M.M.M. Jaradat, I.Y.A. Al-Shboul,
Edge-maximal graphs without $\theta_5$-graphs,
\emph{Ars Combin.} \textbf{124} (2016) 193--207.

\bibitem {Bondy2008}
J.A. Bondy, U.S.R. Murty, Graph Theory, Grad. Texts Math. 244, Springer, New
York, 2008.


\bibitem {Bukh2020}
B. Bukh, M. Tait,
Tur\'{a}n numbers of theta graphs,
\emph{Combin. Probab. Comput.} \textbf{29} (4) (2020)  495--507.


%\bibitem {Babai2009}
%L. Babai, B. Guiduli,
%Spectral extrema for graphs: the Zarankiewicz problem,
%\emph{Electron. J. Combin.} \textbf{16} (1) (2009), Research Paper 123, 8 pp.


\bibitem {Brouwer1981}
A.E. Brouwer,
Some lotto numbers from an extension of Tur\'{a}n's theorem,
Afdeling Zuivere Wiskunde [Department of Pure Mathematics], 152. \emph{Mathematisch Centrum, Amsterdam}, 1981. i+6 pp.



%\bibitem {Bondy2008}
%J.A. Bondy, U.S.R. Murty,
%Graph Theory, Grad. Texts Math. 244, Springer, New York, 2008.

\bibitem {Caccetta2002}
L. Caccetta, R.-Z. Jia,
Edge maximal non-bipartite graphs without odd cycles of prescribed lengths,
\emph{Graphs Combin.} \textbf{18} (1) (2002) 75--92.

\bibitem {Cioaba2022}
S. Cioab\u{a}, D.N. Desai, M. Tait, The spectral radius of graphs with no odd wheels,
\emph{European J. Combin.} \textbf{99} (2022), Paper No. 103420, 19 pp.

\bibitem {Cioaba-2020}
S. Cioab\u{a}, L.H. Feng, M. Tait, X.-D. Zhang,
The maximum spectral radius of graphs without friendship subgraphs,
\emph{Electron. J. Combin.}
\textbf{27} (4) (2020), Paper No. 4.22, 19 pp.

%\bibitem {Conlon2019}
%D. Conlon,
%Graphs with few paths of prescribed length between any two vertices,
%\emph{Bull. Lond. Math. Soc.} \textbf{51} (6) (2019), 1015--1021.



\bibitem {Desai-2022}
D.N. Desai, L.Y. Kang, Y.T. Li, Z.Y. Ni, M. Tait, J. Wang,
Spectral extremal graphs for intersecting cliques,
\emph{Linear Algebra Appl.}
\textbf{644} (2022) 234--258.

\bibitem {Erdos-1967}
P. Erd\H{o}s,  Some recent results on extremal problems in graph theory (Results),
In: Theory of Graphs (International Symposium Rome, 1966), Gordon and
Breach, New York, Dunod, Paris, 1966, pp. 117--123.

\bibitem {Erdos-1968}
P. Erd\H{o}s, On some new inequalities concerning extremal properties of
graphs, In: Theory of Graphs (Proceedings of the Colloquium, Tihany, 1966),
Academic Press, New York, 1968, pp. 77--81.


\bibitem {Faudree1983}
R.J. Faudree, M. Simonovits,
On a class of degenerate extremal graph problems,
\emph{Combinatorica} \textbf{3} (1) (1983)  83--93.


\bibitem {FG-2015}
Z. F\"{u}redi, D. Gunderson,
Extremal numbers for odd cycles,
\emph{Combin. Probab. Comput.}, \textbf{24} (4) (2015) 641--645.


\bibitem {Guiduli-1996}
B.D. Guiduli, Spectral extrema for graphs, Ph.D. Thesis, 105 pages,
University of Chicago, December 1996.




\bibitem {Guo20212}
H.T. Guo, H.Q. Lin, Y.H. Zhao,
A spectral condition for the existence of a pentagon in non-bipartite graphs,
\emph{Linear Algebra Appl.} \textbf{627} (2021) 140--149.


\bibitem {Li-2022}
Y.T. Li, Y.J. Peng,
The spectral radius of graphs with no intersecting odd cycles,
\emph{Discrete Math.} \textbf{345} (8)
(2022), Paper No. 112907, 16 pp.


%\bibitem {Haggkvist1981}
%R. H\"{a}ggkvist, R.J. Faudree, R.H. Schelp,
%Pancyclic graphs--connected Ramsey number,
%\emph{Ars Combin.} \textbf{11} (1981), 37--49.



%\bibitem {Hanson1991}
%D. Hanson, B. Toft,
%$k$-saturated graphs of chromatic number at least $k$.
%\emph{Ars Combin.} \textbf{31} (1991), 159--164.

%\bibitem {Jia1998}
%R. Jia, Some extremal problems in graph theory, Ph.D. thesis, Curtin University of
%Technology, Australia (1998).

\bibitem {Khadziivanov1979}
N. Khad\v{z}iivanov, V. Nikiforov, Solution of a problem of P. Erd\H{o}s about the maximum number of triangles
with a common edge in a graph,
\emph{C. R. Acad. Bulgare Sci.} \textbf{32} (1979) 1315--1318 (in Russian).



\bibitem {Li-N2023}
B.L. Li, B. Ning, Eigenvalues and cycles of consecutive lengths,
\emph{J. Graph Theory} \textbf{103} (3) (2023)  486--492.


\bibitem {Li2023}
S.C. Li, W.T. Sun, W. Wei,
Forbidden theta graph, bounded spectral radius and size of non-bipartite graphs,
\emph{J. Korean Math. Soc.} \textbf{60} (5) (2023)  959--986.

\bibitem {Li-P2023}
Y.T. Li, Y.J. Peng,
Refinement on spectral Tur\'{a}n's theorem,
\emph{SIAM J. Discrete Math.} \textbf{37} (4) (2023)  2462--2485.


%\bibitem {Lin-G2021}
%H.Q. Lin, H.T. Guo,
%A spectral condition for odd cycles in non-bipartite graphs,
%\emph{Linear Algebra Appl.} \textbf{631} (2021), 83--93.

\bibitem {Lin-N2021}
H.Q. Lin, B. Ning, B. Wu, Eigenvalues and triangles in graphs,
 \emph{Combin. Probab. Comput.} \textbf{30} (2) (2021) 258--270.


\bibitem {Liu2023}
X.-C. Liu, X. Yang,
On the Tur\'{a}n number of generalized theta graphs,
\emph{SIAM J. Discrete Math.} \textbf{37} (2) (2023) 1237--1251.

\bibitem {Nikiforov-2007}
V. Nikiforov, Bounds on graph eigenvalues II,
\emph{Linear Algebra Appl.}
\textbf{427} (2-3) (2007) 183--189.

%\bibitem {Nikiforov2008}
%V. Nikiforov, A spectral condition for odd cycles in graphs,
%\emph{Linear Algebra Appl.} \textbf{428} (7) (2008),  1492--1498.

\bibitem {Nikiforov-2009-2}
V. Nikiforov, Spectral saturation:
inverting the spectral Tur\'{a}n theorem,
\emph{Electron. J. Combin.} \textbf{16} (1) (2009), Research Paper 33, 9 pp.

\bibitem {Nikiforov-2009}
V. Nikiforov, Stability for large forbidden subgraphs,
\emph{J. Graph Theory} \textbf{62} (4) (2009) 362--368.

\bibitem {Ren2024}
S.J. Ren, J. Wang, S.P. Wang, W.H. Yang,
A stability result for $C_{2k+1}$-free graphs,
\emph{SIAM J. Discrete Math.}
\textbf{38} (2) (2024) 1733--1756.


\bibitem {Simonovits1968}
M. Simonovits, A method for solving extremal problems in graph theory, stability problems,
\emph{Theory of graphs. Academic Press, New York,} 1968.


\bibitem {Turan}
 P. Tur\'{a}n, On an extremal problem in graph theory,
 \emph{Mat. Fiz. Lapok} \textbf{48} (1941) 436--452.



\bibitem {Tyomkyn2015}
M. Tyomkyn, A.J. Uzzell,
Strong Tur\'{a}n stability,
\emph{Electron. J. Combin.} \textbf{22} (3) (2015), Paper 3.9, 24 pp.


\bibitem {Verstraete2019}
J. Verstra\"{e}te, J. Williford,
Graphs without theta subgraphs,
\emph{J. Combin. Theory Ser. B} \textbf{134} (2019) 76--87.



\bibitem {Wang2023}
J. Wang, L.Y. Kang, Y.S. Xue, On a conjecture of spectral extremal problems,
\emph{J. Combin. Theory Ser. B} \textbf{159} (2023) 20--41.



%\bibitem {Wu2005}
%B.F. Wu, E.L. Xiao, Y. Hong,
%The spectral radius of trees on $k$ pendant vertices,
%\emph{Linear Algebra Appl.} \textbf{395} (2005), 343--349.


\bibitem {Zhai2021}
M.Q. Zhai, L.F. Fang, J.L. Shu,
On the Tur\'{a}n number of theta graphs,
\emph{Graphs Combin.} \textbf{37} (6) (2021) 2155--2165.



\bibitem {ZHAI2022}
M.Q. Zhai, R.F. Liu, J. Xue,
A unique characterization of spectral extrema for friendship graphs,
\emph{Electron. J. Combin.} \textbf{29} (3) (2022),
Paper No. 3.32, 17 pp.


\bibitem {Zhai2023}
M.Q. Zhai, H.Q. Lin,
A strengthening of the spectral chromatic critical edge theorem: books and theta graphs,
\emph{J. Graph Theory} \textbf{102} (3) (2023) 502--520.

%\bibitem {ZHAI}
%M.Q. Zhai, H.Q. Lin, Spectral extrema of $K_{s,t}$-minor free graphs--on a conjecture of M. Tait,
%\emph{J. Combin. Theory Ser. B} \textbf{157} (2022), 184--215.

\bibitem {Zhang2023}
Z.Y. Zhang, Y.H. Zhao,
A spectral condition for the existence of cycles with consecutive odd lengths in non-bipartite graphs,
\emph{Discrete Math.} \textbf{346} (6) (2023), Paper No. 113365, 10 pp.






\end{thebibliography}
\end{document}